\theoremstyle{plain}
\patchcmd\Gread@eps{\@inputcheck#1 }{\@inputcheck"#1"\relax}{}{}
\numberwithin{equation}{section}
\newtheorem{theorem}{Theorem}[section]
\newtheorem{proposition}[theorem]{Proposition}
\newtheorem{lemma}[theorem]{Lemma}
\newtheorem{corollary}[theorem]{Corollary}
\newtheorem{conjecture}[theorem]{Conjecture}
\theoremstyle{definition}
\newcommand{\appsection}[1]{\let\oldthesection\thesection
\renewcommand{\thesection}{Appendix \oldthesection}
\section{#1}\let\thesection\oldthesection}
\newtheorem{definition}[theorem]{Definition}
\theoremstyle{remark}
\newtheorem{remark}[theorem]{Remark}
\newtheorem{example}[theorem]{Example}
\DeclareMathOperator{\Amp}{Amp}
\DeclareMathOperator{\Nef}{Nef}
\DeclareMathOperator{\BBig}{Big}
\DeclareMathOperator{\Eff}{Eff}
\DeclareMathOperator{\Bir}{Bir}
\DeclareMathOperator{\Aut}{Aut}
\DeclareMathOperator{\Mov}{\overline{Mov}}
\DeclareMathOperator{\Pic}{Pic}
\DeclareMathOperator{\codim}{codim}
\def\R{{\mathbb{R}}}
\def\Z{{\mathbb{Z}}}
\def\C{{\mathbb{C}}}
\def\P{{\mathbb{P}}}
\renewcommand{\O}{{\mathcal{O}}}
\renewcommand{\to}[1][]{\xrightarrow{\ #1\ }}
\begin{document}
\title[Movable cone theorem for Calabi-Yau complete intersections]{Birational automorphism groups and the movable cone theorem for Calabi-Yau complete intersections of products of projective spaces}
\author[Jos\'e Y\'a\~nez]{Jos\'e Ignacio Y\'a\~nez}

\address{Department of Mathematics, University of Utah, 155 South 1400 East, Salt Lake City, UT 84112, USA.}

\email{yanez@math.utah.edu}

%
%\subjclass[2010]{ (primary), (secondary)}
%
%\keywords{}
%
%\date{\today}

\begin{abstract}

For a Calabi-Yau manifold $X$, the Kawamata -- Morrison movable cone conjecture connects the convex geometry of the movable cone $\Mov(X)$ to the birational automorphism group. Using the theory of Coxeter groups, Cantat and Oguiso proved that the conjecture is true for general varieties of Wehler type, and they described explicitly $\Bir(X)$. We generalize their argument to prove the conjecture and describe $\Bir(X)$ for general complete intersections of ample divisors in arbitrary products of projective spaces. Then, under a certain condition, we give a description of the boundary of $\Mov(X)$ and an application connected to the numerical dimension of divisors.

\end{abstract}
\maketitle
\tableofcontents
%----------------------------------------------------------------------------------------------------------------------------------------------

\section{Introduction} \label{section:Introduction}

A Calabi-Yau manifold is a smooth projective manifold $X$ such that $\O_X(K_X) \simeq \O_X$ and $H^1(X,\O_X) = 0$. 

When studying the geometry of a variety $X$, the N\'eron-Severi group $N^1(X)$ of divisors modulo numerical equivalence plays a major role. In particular, certain cones inside the real vector space $N^1(X)_\R$ can give precise information about geometric properties of $X$. Given, for example, that the automorphism group of $X$ acts naturally on the nef cone, one can ask if the structure of $\Nef(X)$ is related to the complexity of $\Aut(X)$. These kinds of questions are addressed by the Kawamata-Morrison conjecture.

\begin{conjecture}[\cite{Morrison93},\cite{Kawamata97}]
Let $X$ be a Calabi-Yau manifold.
\begin{enumerate}
\item[(a)] There exists a rational polyhedral cone $C$ which is a fundamental domain for the action of $\Aut(X)$ on $\Nef^e(X):= \Nef(X) \cap \Eff(X)$. More explicitely, \[ \Nef^e(X) = \bigcup_{g\in \Aut(X)} g^* C ,\] and $\mathrm{int}\, C \cap \mathrm{int}\, g^*C=\emptyset$, unless $g^* = \mathrm{id}$.
\item[(b)] There exists a rational polyhedral cone $C$ which is a fundamental domain for the action of $\Bir(X)$ on ${\Mov\,}^e(X):= \Mov(X) \cap \Eff(X)$. More explicitely, \[ {\Mov\,}^e(X) = \bigcup_{g\in \Bir(X)} g^* C ,\] and $\mathrm{int}\, C \cap \mathrm{int}\, g^*C=\emptyset$, unless $g^* = \mathrm{id}$.
\end{enumerate}
\label{ConjectureKM}
\end{conjecture}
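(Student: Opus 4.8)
The plan is to treat both parts uniformly as instances of a single abstract problem: a discrete group $\Gamma$ acting on a convex cone in $N^1(X)_\R$, preserving the integral lattice $N^1(X)_\Z$ together with the relevant sub-cone, for which one seeks a rational polyhedral fundamental domain. For part (a) the group is $\Gamma_a = \mathrm{im}\bigl(\Aut(X) \to \mathrm{GL}(N^1(X)_\Z)\bigr)$ acting on $\Nef^e(X)$; for part (b) it is $\Gamma_b = \mathrm{im}\bigl(\Bir(X) \to \mathrm{GL}(N^1(X)_\Z)\bigr)$ acting on $\Mov^e(X)$. First I would record the standard structural facts that both groups are discrete (for a Calabi--Yau manifold the kernel of $\Aut(X)\to \mathrm{GL}(H^2(X,\Z))$ is finite, and a birational map between Calabi--Yau manifolds is an isomorphism in codimension one, so $\Bir(X)$ acts faithfully enough on $N^1(X)$), and that they preserve the displayed cones as well as the entire cup-product structure on $H^*(X)$.

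Second, I would reduce (b) to (a) through the minimal model program. For a Calabi--Yau manifold every marked minimal model is a small $\Q$-factorial modification $f\colon X \dashrightarrow X'$, and the pulled-back ample cones $f^*\Amp(X')$ tile the interior of $\Mov^e(X)$, with walls coming from flopping contractions; this is the Kawamata chamber decomposition. The group $\Bir(X)$ permutes these chambers, acting transitively on those attached to a fixed isomorphism class of model, while the stabilizer of a chamber $f^*\Nef^e(X')$ is the image of $\Aut(X')$ conjugated by $f^*$. A fundamental domain for $\Gamma_b$ on $\Mov^e(X)$ is therefore obtained by taking one chamber per isomorphism class of modification and intersecting it with a fundamental domain for the automorphisms of that model. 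This reduces (b) to two inputs: finiteness of small modifications up to isomorphism modulo $\Bir(X)$, and part (a) applied to each model.

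Third, for part (a) I would try to produce the rational polyhedral fundamental domain by the arithmetic-group machinery for cones. The image $\Gamma_a$ is an arithmetic subgroup of $\mathrm{GL}(N^1(X)_\Z)$ preserving $\Nef(X)$ and $\Eff(X)$. The prototype is Looijenga's theorem: a discrete group acting on a suitable non-degenerate cone, with the rational polyhedral convex hull of its rational boundary points, admits a rational polyhedral fundamental domain provided the action has finite covolume in an appropriate sense. When $N^1(X)$ carries a quadratic form of hyperbolic signature $(1,\rho-1)$ --- as for K3 and hyperk\"ahler manifolds, and for the special classes treated in this paper --- this is exactly the reflection (Coxeter) group situation: the walls of the flopping reflections cut out Weyl chambers, and finiteness is governed by the geometry of hyperbolic space.

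The hard part --- and the reason the conjecture remains open for general Calabi--Yau manifolds --- is precisely that $N^1(X)$ need not carry any quadratic form of signature $(1,\rho-1)$: the natural pairing coming from cup product is a form of degree $\dim X$, so the hyperbolic-geometry and Coxeter-group techniques have no general analogue, and no known substitute delivers finite covolume and a rational polyhedral fundamental domain. Moreover, finiteness of small $\Q$-factorial modifications up to isomorphism is established only in low dimension (Kawamata--Matsuki in dimension three) and is itself conjectural in general. A realistic plan therefore proves the conjecture only under an additional structure that supplies such a form together with control of $\Gamma_a$ and $\Gamma_b$ as reflection-type groups --- which is the route this paper follows for general complete intersections of ample divisors in products of projective spaces.
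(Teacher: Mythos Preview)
The statement you were given is Conjecture~\ref{ConjectureKM}, the Kawamata--Morrison cone conjecture. The paper does \emph{not} prove this statement; it records it as an open conjecture and then establishes it only for the special family in Theorem~\ref{mainThm}. So there is no ``paper's own proof'' of Conjecture~\ref{ConjectureKM} to compare against, and any purported general proof would be a major result well beyond the paper's scope.

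Your proposal is not really a proof attempt either, and you seem aware of this: what you have written is a survey of the standard heuristics (chamber decomposition of $\Mov^e(X)$ by nef cones of marked minimal models, Looijenga-type arguments when a hyperbolic form is available) together with an honest accounting of the obstructions (no signature-$(1,\rho-1)$ form in general, finiteness of models open beyond dimension three). That is a reasonable essay on the conjecture, but several of the steps you label as ``reductions'' are themselves conjectural. In particular, your reduction of (b) to (a) presupposes that there are only finitely many marked minimal models up to isomorphism modulo $\Bir(X)$, which is exactly the content one is trying to prove; and invoking ``arithmetic-group machinery'' for $\Gamma_a$ in general is not available, since $\Gamma_a$ need not be arithmetic in any useful sense absent a preserved quadratic form. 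Your final paragraph correctly identifies that the paper bypasses all of this by exhibiting, for its specific $X$, an explicit Coxeter group $W_J$ whose Tits cone is identified with $\Mov^e(X)$ and whose fundamental chamber is $\Nef(X)$; that argument is carried out in Sections~\ref{section:ComputingBir}--\ref{section:MovableCone} and does not pass through the general framework you sketch.
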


Roughly speaking, the conjecture says that if the nef cone (movable cone) of $X$ is not rational polyhedral, then the (birational) automorphism group is infinite. On the other hand, if the (birational) automorphism group is finite, then the nef cone (movable cone) has to be rational polyhedral.

The conjecture has been proven in several cases (see \cite{LazicOguisoPeternell18} for an in-depth discussion of the conjecture). In \cite{CantatOguiso15}, Cantat and Oguiso prove Conjecture \ref{ConjectureKM} for a series of examples. Let $X$ be a hypersurface of multidegree $(2,\ldots,2)$ in $(\P^1)^{n+1}$, with $n\geq 3$. Then $X$ is a Calabi - Yau manifold of dimension $n$, and the following theorem holds. 

\begin{theorem}[\cite{CantatOguiso15}]
Assume that $X$ is given by a (very) general divisor of multidegree $(2,\ldots,2)$. Then:
\begin{enumerate}
\item[(a)] $\Aut(X)$ corresponds to a finite subgroup of $\Aut((\P^1)^{n+1})$, and it acts trivially on $N^1(X)_\R$. For a very general choice of the divisor, $\Aut(X)$ is trivial.
\item[(b)] $\Bir(X)$ is isomorphic to $\Aut(X)\cdot (\underbrace{\Z/2\Z \ast \cdots \ast \Z/2\Z}_{n+1})$.
\item[(c)] Conjecture \ref{ConjectureKM} holds for $X$. More precisely, $\Nef(X)$ is rational polyhedral and it is a fundamental domain for the action of $\Bir(X)$.
\end{enumerate}
\label{thmCO15}
\end{theorem}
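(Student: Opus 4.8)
\emph{Set-up and the nef cone.} Since $X$ is Calabi--Yau, every birational self-map of $X$ is an isomorphism in codimension one, so there is a natural homomorphism $\rho\colon\Bir(X)\to\mathrm{GL}(N^1(X)_\Z)$; the plan --- the one this paper later generalizes, following Cantat and Oguiso --- is to show that the image of $\rho$ is a universal Coxeter group generated by explicit geometric involutions, and to extract (a), (b), (c) from the theory of reflection groups on hyperbolic space. For $n\ge 3$ the Grothendieck--Lefschetz theorem applied to the ample divisor $X\subset(\P^1)^{n+1}$ gives $N^1(X)_\R=\bigoplus_{i=0}^{n}\R h_i\cong\R^{n+1}$, where $h_i$ is the restriction of $\pi_i^*\O_{\P^1}(1)$; each $h_i$ is base-point free, hence nef. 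The projection $p_i\colon X\to(\P^1)^{n}$ forgetting the $i$-th factor is dominant of generic degree $2$, so its deck transformation is a birational involution $\iota_i\in\Bir(X)$; for $n\ge 3$ the locus where $p_i$ fails to be finite is nonempty of codimension $\ge 2$ in $X$, so $\iota_i$ is a pseudo-automorphism and $\iota_i^*$ preserves $\Mov(X)$. Over a point of that locus the fibre of $p_i$ is a line $\ell_i\subset X$ with $h_j\cdot\ell_i=\delta_{ij}$, so pairing a nef class $D=\sum_i a_ih_i$ with $\ell_i$ forces $a_i\ge 0$; hence $\Nef(X)=\sum_{i=0}^{n}\R_{\ge 0}h_i$, a rational simplicial cone. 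A direct computation gives $\iota_i^*h_j=h_j$ for $j\ne i$ and $\iota_i^*h_i=-h_i+2\sum_{j\ne i}h_j$, so the $n+1$ facets $\langle h_j: j\ne i\rangle$ of $\Nef(X)$ are exactly the hyperplanes fixed pointwise by the $\iota_i^*$.

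\emph{The Coxeter group.} Introduce the symmetric bilinear form $q$ on $N^1(X)_\R$ with $q(h_i,h_i)=2-n$ and $q(h_i,h_j)=1$ for $i\ne j$; its Gram matrix $(1-n)I+J$ has eigenvalues $2$ and $1-n$ (multiplicity $n$), so $q$ is Lorentzian of signature $(1,n)$. One checks that every $\iota_i^*$ preserves $q$ and is the $q$-reflection in the hyperplane orthogonal to the $q$-negative vector $v_i:=2h_i-\sum_k h_k$, and that the walls $v_i^\perp$ are pairwise non-crossing in the associated real hyperbolic space $\mathbb{H}^n$. By the standard theory of reflection groups, $W:=\langle\iota_0^*,\dots,\iota_n^*\rangle$ is then a discrete subgroup of $\mathrm{O}(N^1(X)_\Z,q)$ with fundamental chamber $\Nef(X)$; being generated by $n+1$ involutions subject to no further relations it is the universal Coxeter group $\Z/2\Z\ast\cdots\ast\Z/2\Z$ ($n+1$ factors), and since that free product is the freest group generated by $n+1$ involutions, the induced surjection $\langle\iota_0,\dots,\iota_n\rangle\twoheadrightarrow W$ is an isomorphism; thus the subgroup of $\Bir(X)$ generated by the $\iota_i$ is itself $\cong\Z/2\Z\ast\cdots\ast\Z/2\Z$, and $\Nef(X)$ is rational polyhedral.

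\emph{Tiling the movable cone, and $\Bir(X)$.} The chambers $w^*\Nef(X)$, $w\in W$, have pairwise disjoint interiors and their union is the convex Tits cone; each lies in $\Mov(X)$ because $w$ is a composition of pseudo-automorphisms. Conversely, an interior point of $\Mov^e(X)$ lying outside $\bigcup_{w}w^*\Nef(X)$ would produce a boundary point of that union interior to $\Mov^e(X)$, hence an accumulation point of infinitely many chambers --- impossible, as chambers accumulate only on the limit set, which lies on $\partial\{q>0\}$. Hence $\Mov^e(X)=\bigcup_{w\in W}w^*\Nef(X)$ with disjoint chamber interiors, which is Conjecture~\ref{ConjectureKM}(b) --- and, with the previous paragraph, part (c) --- with $C=\Nef(X)$. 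For part (b) of the theorem, any $g\in\Bir(X)$ preserves $\Mov^e(X)$, hence permutes chambers, so after composing with a suitable $w\in W$ we may assume $\rho(g)$ stabilizes the simplicial cone $\Nef(X)$ and therefore permutes its rays $\R_{\ge 0}h_i$ by some $\sigma\in S_{n+1}$; for general $X$ no birational self-map intertwines the pencils $|h_i|$ with $\sigma\ne\mathrm{id}$, so $g\in\ker\rho$. An element of $\ker\rho$ preserves every pencil $|h_i|$, hence is induced by an element of $\Aut((\P^1)^{n+1})$ stabilizing $X$, i.e.\ lies in $\Aut(X)$, while $\Aut(X)\subseteq\ker\rho$ by part (a); so $\Bir(X)/\Aut(X)\cong W$, the extension splits via $\langle\iota_0,\dots,\iota_n\rangle$, and $\Bir(X)\cong\Aut(X)\cdot(\Z/2\Z\ast\cdots\ast\Z/2\Z)$. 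Part (a) itself --- finiteness of $\Aut(X)$, its triviality on $N^1(X)$, and its triviality for very general $X$ --- follows from the classical rigidity of general subvarieties of $(\P^1)^{n+1}$ together with the embedding $\Aut(X)\hookrightarrow\Aut((\P^1)^{n+1})$.

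The step I expect to be the main obstacle is the exhaustion claim above: that the $W$-orbit of $\Nef(X)$ really fills $\Mov^e(X)$, equivalently that $X$ carries no pseudo-automorphisms or movable-cone walls beyond those coming from the $\iota_i$. This is precisely where the hyperbolic-geometry input (convexity of the Tits cone, and the location of the limit set on the boundary of the positive cone) must be combined with the algebro-geometric facts about a general $X$ (the exact shape of $\Nef(X)$ and the absence of unexpected birational contractions); the generality statements packaged in part (a), proved by degeneration and monodromy arguments on the family of all such $X$, are the other place where ``general'' is essential.
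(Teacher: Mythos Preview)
The paper does not give its own proof of Theorem~\ref{thmCO15}; it quotes the result from Cantat--Oguiso and then proves the generalization Theorem~\ref{mainThm}, whose specialization to $n_1=\cdots=n_l=1$ recovers Theorem~\ref{thmCO15}. So the relevant comparison is between your sketch and the proof of Theorem~\ref{mainThm} in Sections~\ref{section:ComputingBir}--\ref{section:MovableCone}.

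Your outline is essentially the original Cantat--Oguiso strategy: realize the $\iota_i^*$ as reflections for an explicit Lorentzian form, invoke the hyperbolic reflection-group machinery to get the free product and the chamber decomposition, and then argue by limit-set considerations that the Tits cone exhausts $\Mov^e(X)$. The paper takes a genuinely different route at the two crucial steps. For part~(b), instead of your chamber-permutation argument (``any $g$ permutes chambers, so after composing with $w\in W$ it fixes $\Nef(X)$''), the paper uses Kawamata's results (Theorems~\ref{thmKContraction} and~\ref{thmKFlops}) to show directly that every birational self-map factors as flops followed by an automorphism, and that the only flopping contractions are the $\overline\pi_j$; this yields Proposition~\ref{propBirIsoInvo} without ever needing to know in advance that the chambers tile $\Mov^e(X)$. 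For the exhaustion in part~(c) --- exactly the step you flag as the main obstacle --- the paper avoids the limit-set/light-cone argument entirely and instead runs an elementary height descent (Lemma~\ref{lemmaEffectiveComputation}): for any pseudoeffective integral class $E$ one shows, via intersection-number inequalities, that at most one coefficient $\beta_j(E)$ is negative, and applying the corresponding $\iota_j^*$ strictly decreases the integer $s(E)=\sum_i\beta_i(E)\ge 0$, so the process terminates in $\Nef(X)$.

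What each approach buys: your hyperbolic-geometry argument is conceptually clean in the Wehler case because the form really is Lorentzian there, but it does not transport to the general $\P^{n_1}\times\cdots\times\P^{n_l}$ setting of this paper, where the Lorentzian property of $B_J$ is only conjectural (Conjecture~\ref{conjectureLorentzian}). The paper's flop-decomposition plus height-descent argument is more elementary, needs no signature hypothesis, and is what makes the generalization go through. Your sketch is correct in spirit for Theorem~\ref{thmCO15}, and you have correctly located its one soft spot; the paper's Lemma~\ref{lemmaEffectiveComputation} is precisely the device that closes that gap without appealing to the boundary structure of the Tits cone.
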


Let $\P:= \P^{n_1}\times \ldots \times \P^{n_l}$ such that $\sum n_i \geq 4$, and if $l=2$ then $(n_1,n_2)\neq (2,2)$. Let $X$ be a complete intersection Calabi-Yau subvariety in $\P$ given by the intersection of $m$ ample divisors, with $m\leq \min\{n_i\}$. In this article we generalize Theorem \ref{thmCO15} to this context.

\begin{theorem}
Assume that $X$ as above is given by a general choice of ample divisors. Then:
\begin{enumerate}
\item[(a)] $\Aut(X)$ corresponds to a finite subgroup of $\Aut(\P)$, and it acts trivially on $N^1(X)_\R$.
\item[(b)] $\Bir(X)$ is isomorphic to $\Aut(X)\cdot (\underbrace{\Z/2\Z \ast \cdots \ast \Z/2\Z}_{|J|})$, where \[ J = \{j\mid n_j = \min\{n_i\} = \codim X\}. \]
\item[(c)] Conjecture \ref{ConjectureKM} holds for $X$. More precisely, $\Nef(X)$ is rational polyhedral and it is a fundamental domain for the action of $\Bir(X)$.
\end{enumerate}
\label{mainThm}
\end{theorem}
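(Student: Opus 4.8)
The plan is to follow the strategy of Cantat--Oguiso (Theorem~\ref{thmCO15}), replacing the combinatorics of the Wehler hypersurface by that of a general complete intersection.

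\emph{Picard lattice and the birational involutions.} The hypotheses $\sum n_i\geq4$ and, when $l=2$, $(n_1,n_2)\neq(2,2)$ force $\dim X=\sum n_i-m\geq3$, so the Grothendieck--Lefschetz theorem applies to $X\subset\P$ and yields $N^1(X)=\Pic(X)\cong\Pic(\P)=\bigoplus_{i=1}^l\Z\,h_i$, where $h_i$ is the pullback of the hyperplane class of $\P^{n_i}$ under the projection $p_i\colon X\to\P^{n_i}$; each $h_i$ is nef, so $\sigma:=\sum_i\R_{\geq0}h_i\subseteq\Nef(X)$. Now fix $j\in J$, i.e.\ $n_j=m$. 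The projection $q_j\colon X\dashrightarrow B_j:=\prod_{i\neq j}\P^{n_i}$ has $\dim B_j=\dim X$, and its generic fibre is the complete intersection in $\P^{n_j}=\P^m$ of the $m$ defining divisors; by B\'ezout this fibre consists of $\prod_k d^{(j)}_k$ points, and $K_X=\O_X$ forces $\sum_k d^{(j)}_k=m+1$ with each $d^{(j)}_k\geq1$, so exactly one of them is $2$ and $q_j$ has degree $2$. The covering involution $\iota_j\in\Bir(X)$ is, for a general choice of divisors, an isomorphism in codimension one — its indeterminacy locus lies over the codimension-$\geq2$ locus of $B_j$ where $q_j$ acquires positive-dimensional fibres — hence a pseudo-automorphism acting on $\Pic(X)$. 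Since $q_j$ is equivariant for the projections to the remaining factors, $\iota_j^*h_i=h_i$ for $i\neq j$; and $h_j+\iota_j^*h_j$, being $\iota_j$-invariant, equals $q_j^*\mu_j$ for some $\mu_j=\sum_{i\neq j}c_{ij}h_i\in\Pic(B_j)_\Q$, so $\iota_j^*h_j=-h_j+\sum_{i\neq j}c_{ij}h_i$. A local computation of $\iota_j$ in affine coordinates (the Vieta-type involution of Cantat--Oguiso, available because one defining equation is a quadric in the $\P^{n_j}$-variables), or equivalently pairing $q_j^*\mu_j$ with a line pulled back from the $i$-th factor and computing on the resulting Hirzebruch surface, gives $c_{ij}=2(n_i+1)-d^{(i)}_{k}$ for the index $k$ with $d^{(j)}_{k}=2$; in particular $c_{ij}\geq n_i+1\geq2$. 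Hence $\iota_j^*$ is the reflection fixing $\bigoplus_{i\neq j}\R\,h_i$ and negating $\alpha_j:=h_j-\tfrac12\sum_{i\neq j}c_{ij}h_i$.

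\emph{The Coxeter group and the movable cone.} The maps $\iota_j^*$ preserve a symmetric bilinear form $b$ on $N^1(X)_\R$ of signature $(1,l-1)$ — when $J=\{1,\dots,l\}$ it is determined up to scale by $b(\alpha_j,h_i)=0$ for $i\neq j$ — with respect to which each $\iota_j^*$ is the reflection in $\alpha_j$. Because $c_{ij}c_{ji}\geq(n_i+1)(n_j+1)\geq4$, every pair $\langle\iota_j^*,\iota_{j'}^*\rangle$ is an infinite dihedral group, so by faithfulness of the geometric representation $W:=\langle\iota_j^*:j\in J\rangle$ is the universal Coxeter group $\Z/2\ast\cdots\ast\Z/2$ with $|J|$ factors; since the composite $\Z/2\ast\cdots\ast\Z/2\twoheadrightarrow\langle\iota_j:j\in J\rangle\twoheadrightarrow W$ is an isomorphism, $\langle\iota_j:j\in J\rangle\cong W$ inside $\Bir(X)$. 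Next one shows $\Nef(X)=\sigma$: pairing a nef class $\sum a_ih_i$ with a line contained in a positive-dimensional fibre of $q_i$ — a fibre $X\cap(\P^{n_i}\times\{\ast\})$ if $i\notin J$, a vertical line over the degeneracy locus of $q_i$ if $i\in J$ — forces $a_i\geq0$; thus $\Nef(X)$ is rational polyhedral, its $|J|$ facets $\alpha_j^\perp\cap\sigma$ are the flopping walls realised by the $\iota_j$, and the remaining $l-|J|$ facets lie on the boundary of $\Mov(X)$ (at the fibrations $p_i$, $i\notin J$). One then proves, exactly as in Cantat--Oguiso, that $\Mov^e(X)=\bigcup_{w\in W}w(\sigma)$ with pairwise disjoint interiors: $\supseteq$ is immediate since each $\iota_j$ preserves $\Mov(X)$ and $\Eff(X)$; $\subseteq$ follows from the fact that every effective movable class is the nef class of a small $\Q$-factorial modification of $X$, each such modification being a copy of $X$ identified with it by a word in the $\iota_j$; and the fundamental-domain and disjointness statements come from the structure of the Tits cone of $W$. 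This establishes~(c).

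\emph{The automorphism and birational automorphism groups.} Every birational self-map of the minimal Calabi--Yau manifold $X$ is a pseudo-automorphism, so $\Bir(X)$ acts on $N^1(X)$; an element of the kernel fixes an ample class, hence is biregular (resolving it by $p,q\colon\widetilde X\to X$ one gets $p^*H\equiv q^*H$, so $p$ and $q$ share a Stein factorization), and conversely $\Aut(X)$ is contained in the kernel once (a) is known, so the kernel is $\Aut(X)$. By the previous paragraph the image of $\Bir(X)$ acts simply transitively on the chambers $\{w(\sigma)\}_{w\in W}$, with $\Aut(X)$ the stabilizer of $\sigma$, so this image is $W$; with the section from the second paragraph this gives $\Bir(X)\cong\Aut(X)\cdot W$, which is~(b). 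For~(a): $\Aut(X)$ permutes the rays of $\Nef(X)=\sigma$, so a finite-index subgroup fixes each $h_i$ and therefore an ample class; since $H^0(X,T_X)=0$ for every Calabi--Yau manifold, this subgroup, hence $\Aut(X)$, is finite. For a general choice of divisors the permutation is trivial, so $\Aut(X)$ fixes every $h_i$, acts trivially on $N^1(X)$, and each $\phi\in\Aut(X)$ descends through the maps $p_i$ to an automorphism of each $\P^{n_i}$; thus $\Aut(X)\subseteq\prod_i PGL_{n_i+1}\subseteq\Aut(\P)$.

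\emph{Main obstacle.} The hard part is the inclusion $\Mov^e(X)\subseteq\bigcup_{w\in W}w(\sigma)$: one must prove that a general such $X$ has no small $\Q$-factorial modification other than copies of itself, interpolated precisely by the flops $\iota_j$, so that the chamber decomposition of $\Mov(X)$ matches the Tits cone of the universal Coxeter group. This, together with the positivity $c_{ij}c_{ji}\geq4$ used above and the genericity statement that $\Aut(X)$ does not permute the $h_i$, is where the bulk of the work lies.
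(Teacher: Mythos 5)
Your overall plan is the right one and your numerical computation of the involutions is correct: your formula $c_{ij}=2(n_i+1)-d^{(i)}_k$ agrees with the paper's $b_{ij}$ (the coefficient of $H_iH_j^{\,n-1}$ in the cycle class of $X$), and the bound $c_{ij}\geq n_i+1\geq 2$ is exactly what makes Vinberg's faithfulness theorem applicable, so the identification of $\langle\iota_j\rangle$ with the universal Coxeter group is sound. However, there is a genuine gap at the step you yourself flag as the ``main obstacle'': the inclusion ${\Mov\,}^e(X)\subseteq\bigcup_{w}w^*(\Nef(X))$. You propose to deduce it from the statement that every effective movable class is nef on some small $\Q$-factorial modification, all SQMs being copies of $X$ glued by words in the $\iota_j$ --- but that statement is itself the content of what must be proved, and deriving it requires either the MMP machinery or the argument you have omitted. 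The paper instead proves this inclusion by a completely elementary descent (Lemma \ref{lemmaEffectiveComputation}, the analogue of Cantat--Oguiso's Lemma 4.4): for a pseudoeffective integral class $E=\sum_i\beta_ih_i$ one shows, using intersection numbers against monomials in the $h_i$, that $\beta_k\geq0$ for $k\notin J$, that at most one $\beta_j$ with $j\in J$ is negative, and that $s(E)=\sum_i\beta_i\geq0$; applying $\iota_j^*$ for the offending index strictly decreases $s$, so the process terminates in $\Nef(X)$. Without this (or an honest proof of your SQM claim), part (c) is not established; and since you derive (b) from (c) via simple transitivity on chambers, (b) is left hanging as well. The paper avoids this circularity by proving (b) first and independently: Kawamata's theorems (\ref{thmKContraction}, \ref{thmKFlops}) say every flopping contraction comes from a codimension-one face of $\Nef(X)$ and every birational self-map factors into flops and an automorphism, and one checks that the Stein factorization of $\pi_j$, $j\in J$, realizes the face $\sum_{k\neq j}\R_{\geq0}h_k$ with flop $\iota_j$, so $\Bir(X)=\Aut(X)\cdot\langle\iota_j\rangle$ with no reference to the movable cone.

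Two smaller points. First, you assert that the $\iota_j^*$ preserve a bilinear form of signature $(1,l-1)$ with respect to which they are reflections. The existence of an invariant form is fine, but the signature claim is precisely the paper's Conjecture \ref{conjectureLorentzian}, which is open in general (verified only computationally for $n<500$, $|J|<50$) and, fortunately, is not needed anywhere in the proof of Theorem \ref{mainThm} --- faithfulness of the geometric representation holds for any $c_{ij}\geq1$ by Vinberg. You should delete the signature claim or downgrade it to a remark. Second, your derivation of $\Nef(X)=\sigma$ by exhibiting contracted curves is a legitimate alternative to the paper's route (which uses Koll\'ar's isomorphism $\overline{NE}(M)\simeq\overline{NE}(V)$ for anticanonical members of Fano manifolds, Theorem \ref{thmCYFano}), but for $i\in J$ it requires proving that the locus of positive-dimensional fibres of $\pi_i$ is nonempty of the right codimension; this is the determinantal-variety computation in the smallness proposition, which also underlies your unproved assertion that $\iota_j$ is an isomorphism in codimension one.
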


This result also includes the results from \cite[Chapter 8]{Skauli17}, \cite[Proposition 1.4]{Oguiso14}, \cite[Theorem 1.4]{Oguiso18}, \cite[Example 3.8 (4)]{Kawamata97} and \cite[Theorem 1.1 (ii) when $d=2$]{Ottem15}. The case when $l=2$ and $(n_1,n_2) = (2,2)$ was studied by Silverman in \cite{Silverman91}.

As in \cite{CantatOguiso15}, the main tool we use is Coxeter groups. These groups, along with some results about Calabi-Yau manifolds, are introduced in Section \ref{section:Preliminaries}. In Section \ref{section:ComputingBir} we prove parts (a) and (b) by realizing $\Bir(X)$ as a subgroup of a Coxeter group. In Section \ref{section:MovableCone} we prove part (c) by establishing an isomorphism between the nef cone of $X$ and the fundamental domain of the action of the Coxeter group. 

Cantat and Oguiso give a description of the boundary of $\Mov(X)$ in terms of what they call ``chinese hat shells'' (See \cite[Sections 2.2.6, 2.2.7 and 2.2.8]{CantatOguiso15}). This view still works in our case when $n=1$. We give a description of the boundary of $\Mov(X)$ in terms of cones involving the eigenvectors of elements of $\Bir(X)$. See Figure \ref{figureMovable} for an example. 

\begin{theorem}
Let $\{h_1,\ldots,h_l\}$ be the set of extremal rays of the simplicial cone $\Nef(X)$ and suppose that $\Bir(X)$ is Lorentzian, seen as a subgroup of a Coxeter group. Then the boundary of the movable cone $\Mov(X)$ is the closure of the union of the following sets:
\begin{enumerate}
\item The $\Bir(X)$-orbit of the codimension one faces $\{ \sum_{k\neq i} a_kh_k \mid a_k\geq 0\}$, for $i\not \in J$; and
\item The $\Bir(X)$-orbit of the cones $\{ a_\lambda v_\lambda + \sum_{k\neq i,j} a_kh_k \mid a_k\geq 0, a_\lambda\geq 0\}$, with $i,j\in J$, where $v_\lambda$ is:
\begin{itemize}
\item If $n \geq 2$, an eigenvector associated to the unique eigenvalue $\lambda > 1$ of $(\iota_i\iota_j)^*$; or
\item If $n = 1$, $v_\lambda = 0$.
\end{itemize}
\end{enumerate}
\label{mainThm2}
\end{theorem}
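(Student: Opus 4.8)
The plan is to read off $\partial\Mov(X)$ from the chamber decomposition provided by Theorem~\ref{mainThm}. By parts (a) and (c), $\Bir(X)$ acts on $N^1(X)_\R$ through a Coxeter group $W$ (the finite subgroup $\Aut(X)$ acting trivially), and $\Mov(X)=\overline{\bigcup_{g\in\Bir(X)}g^*\Nef(X)}$ is the closure of a tiling by copies of the simplicial cone $\Nef(X)=\langle h_1,\dots,h_l\rangle$; the Coxeter generators are the reflections $\iota_j^*$ with $j\in J$, whose mirror is the hyperplane spanned by the facet $F_j=\langle h_k\mid k\neq j\rangle$, the chambers are permuted freely by $W$, and two chambers meet along a common facet only when that facet is a $W$-image of some mirror $F_j$. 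Consequently $\partial\Mov(X)$ is built from two kinds of pieces: the $W$-orbits of the non-mirror facets of $\Nef(X)$, and the loci at infinity where the tiling accumulates.

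For $j\in J$ the chamber $\iota_j^*\Nef(X)$ meets $\Nef(X)$ along $F_j$, so $F_j$ is interior to $\Mov(X)$. For $i\notin J$ no reflection of $W$ has mirror $\langle F_i\rangle$, since every such mirror has the form $w\langle F_j\rangle=\langle wh_k\mid k\neq j\rangle$ with $j\in J$, and — as $i\neq j$ and $w$ fixes $h_i$ — this hyperplane contains $h_i$, whereas $\langle F_i\rangle$ does not. Hence no chamber lies on the far side of $F_i$, so $wF_i\subset\partial\Mov(X)$ for every $w\in W$, giving set (1).

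For the loci at infinity, fix $i\neq j$ in $J$; by Theorem~\ref{mainThm}(b) the subgroup $W_{ij}=\langle\iota_i^*,\iota_j^*\rangle$ is infinite dihedral. It fixes $G_{ij}:=\langle h_k\mid k\neq i,j\rangle$ pointwise and preserves a transverse plane $P_{ij}$, and one computes that the induced quadratic form on $P_{ij}$ has signature $(1,1)$ when $n\geq2$ and is degenerate of rank one when $n=1$. In the first case $(\iota_i\iota_j)^*$ has a unique eigenvalue $\lambda>1$, with eigenvector $v_\lambda\in P_{ij}$; expanding $h_i,h_j$ in eigencomponents of $(\iota_i\iota_j)^*$ and letting $k\rightarrow+\infty$, the rays $((\iota_i\iota_j)^*)^kh_i$ and $((\iota_i\iota_j)^*)^kh_j$ both converge to $\R_{\geq0}v_\lambda$, so the chambers $((\iota_i\iota_j)^*)^k\Nef(X)$ converge (as sets of rays) to $\Sigma_{ij}:=\{a_\lambda v_\lambda+\sum_{k\neq i,j}a_kh_k\mid a_k\geq0,\ a_\lambda\geq0\}$. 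In the second case $(\iota_i\iota_j)^*$ is unipotent and those chambers converge to $G_{ij}$ itself — the convention $v_\lambda=0$ — recovering the ``chinese hat shells'' of Cantat--Oguiso. Either way the chambers approach the limit cone from one side with no chamber on the other side, so $w\Sigma_{ij}\subset\partial\Mov(X)$ for every $w\in W$; this is set (2).

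It remains to show the closures of (1) and (2) exhaust $\partial\Mov(X)$. A boundary point contained in a chamber lies in the relative interior of a proper face $w\langle h_k\mid k\in T\rangle$ with $T\subsetneq\{1,\dots,l\}$; this face is interior to $\Mov(X)$ unless $T$ omits some $i\notin J$ — then it lies in $wF_i\subset$ (1) — or $T$ omits two indices $i,j\in J$ — then it lies in $wG_{ij}\subset w\Sigma_{ij}\subset$ (2). A boundary point lying in no chamber is a limit point of the tiling; since $W$ is Lorentzian it acts on an associated hyperbolic space $\mathbb{H}$ as a discrete reflection group, and this point lies over the limit set $\Lambda(W)\subset\partial\mathbb{H}$. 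If $|J|\geq3$ then $W$ is non-elementary, so the attracting fixed points $wv_\lambda$ of the hyperbolic elements $w(\iota_i\iota_j)^*w^{-1}$ are dense in $\Lambda(W)$, whence the point lies in $\overline{\bigcup_{w,i,j}w\Sigma_{ij}}$; if $|J|\leq2$ this follows directly from the dihedral or trivial structure of $W$. Passing to closures absorbs the lower-dimensional strata where the flat pieces of (1) meet the curved part from (2), and $\partial\Mov(X)=\overline{(1)\cup(2)}$. I expect this last step to be the main obstacle: establishing the density of the eigenrays $wv_\lambda$ in $\Lambda(W)$, equivalently that the curved part of $\partial\Mov(X)$ is the closure of the orbit of the cones $\Sigma_{ij}$. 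This is exactly where the Lorentzian hypothesis is indispensable — without it the chambers need not accumulate along a single eigendirection, and the description in part (2) breaks down.
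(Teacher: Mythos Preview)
Your strategy matches the paper's: both read off $\partial\Mov(X)$ from the chamber decomposition $\Mov(X)=\overline{\bigcup_{g}g^*\Nef(X)}$, separate the boundary into the $W_J$-orbits of the non-mirror facets $F_i$ ($i\notin J$) and the accumulation locus of the tiling, and then argue that the latter is the closure of the orbit of the eigenvector cones $\Sigma_{ij}$. Your treatment of the flat pieces is equivalent to the paper's identity $T_J=T\cap P$ (Proposition~\ref{propTitsConvex}), and your face analysis for boundary points lying in a chamber is correct.

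The one substantive difference is the technical input for the exhaustion step. The paper invokes the Chen--Labb\'e convergence theorem (packaged as Theorem~\ref{thmInfiniteWord}): for a Lorentzian $W_J$, \emph{any} infinite reduced word $\mathbf{w}=s_{k_1}s_{k_2}\cdots$ determines a single limiting direction $\widehat{\gamma(\mathbf{w})}\in\P V$, and every injective sequence $\{w_i\cdot\hat x\}$ converges to it regardless of $\hat x$. This is applied twice: once to show that an accumulation point lies in a specific limit cone $C$, and once to show that the images $w_i\cdot\Sigma_{ij}$ converge to that same $C$, so $C$ lies in the closure of the orbit of $\Sigma_{ij}$. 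Your argument replaces this with the classical hyperbolic-geometry fact that attracting fixed points of hyperbolic elements are dense in the limit set $\Lambda(W_J)\subset\partial\mathbb{H}$. That is correct on the Lorentzian subspace $\mathrm{span}\{\alpha_j\mid j\in J\}$, but a boundary point $x\in\partial\overline{T_J}$ also has a component in the $W_J$-fixed subspace $\mathrm{span}\{c_k\mid k\notin J\}$ (and, when $|J|\geq 3$, the cones $\Sigma_{ij}$ contain the moving vertices $h_k$ with $k\in J\setminus\{i,j\}$), so density of the rays $\R_{\geq 0}\,w v_\lambda$ in $\Lambda(W_J)$ does not by itself give $x\in\overline{\bigcup w\Sigma_{ij}}$. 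The Chen--Labb\'e statement is precisely the upgrade that handles this, since it asserts convergence in the full $\P V$ rather than only on $\partial\mathbb{H}$; your sketch can be completed, but this is the point where you would end up re-proving (or citing) Theorem~\ref{thmInfiniteWord}. The paper also isolates the verification that $\Sigma_{ij}\subset\partial\overline{T_J}$ as a separate statement (Proposition~\ref{propConesBoundary}) via an explicit perturbation argument, where you assert ``no chamber on the other side'' without proof.
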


\begin{figure}[ht]
  \begin{center}
  \hfill
  \begin{minipage}[c]{.45\textwidth}
  \includegraphics[scale=.7]{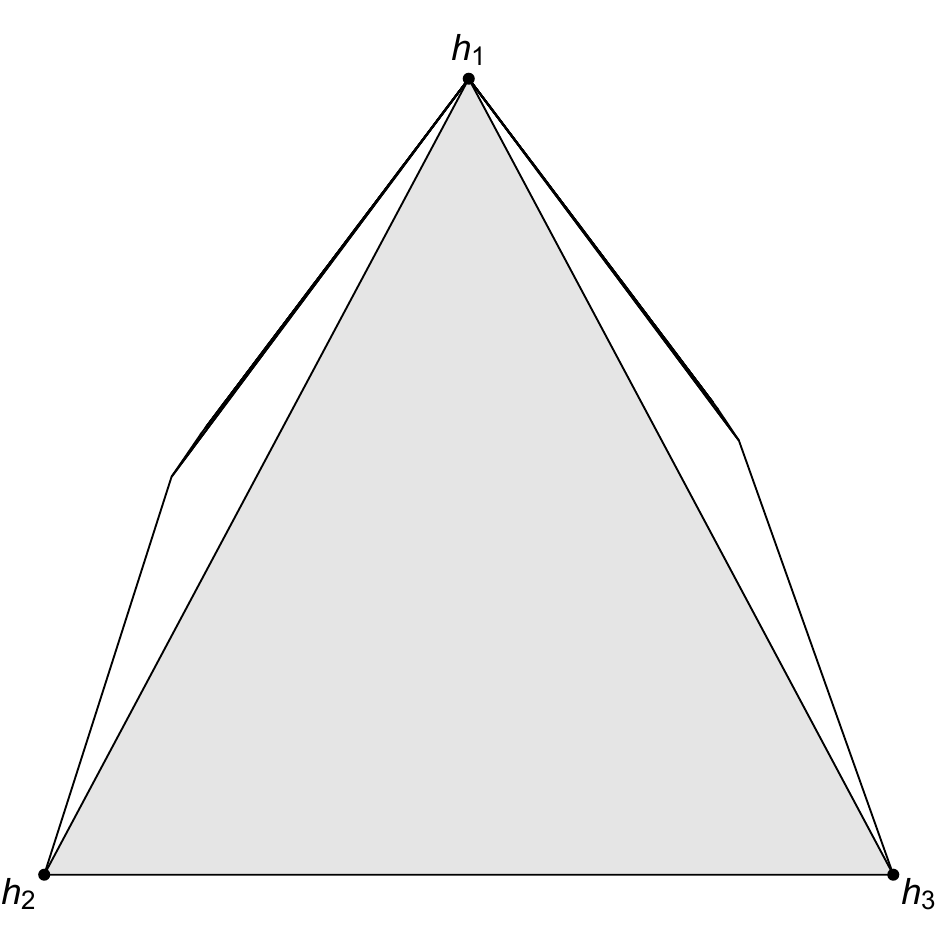}
  \end{minipage} \hfill \hfill \begin{minipage}[c]{.45\textwidth}
  \includegraphics[scale=.7]{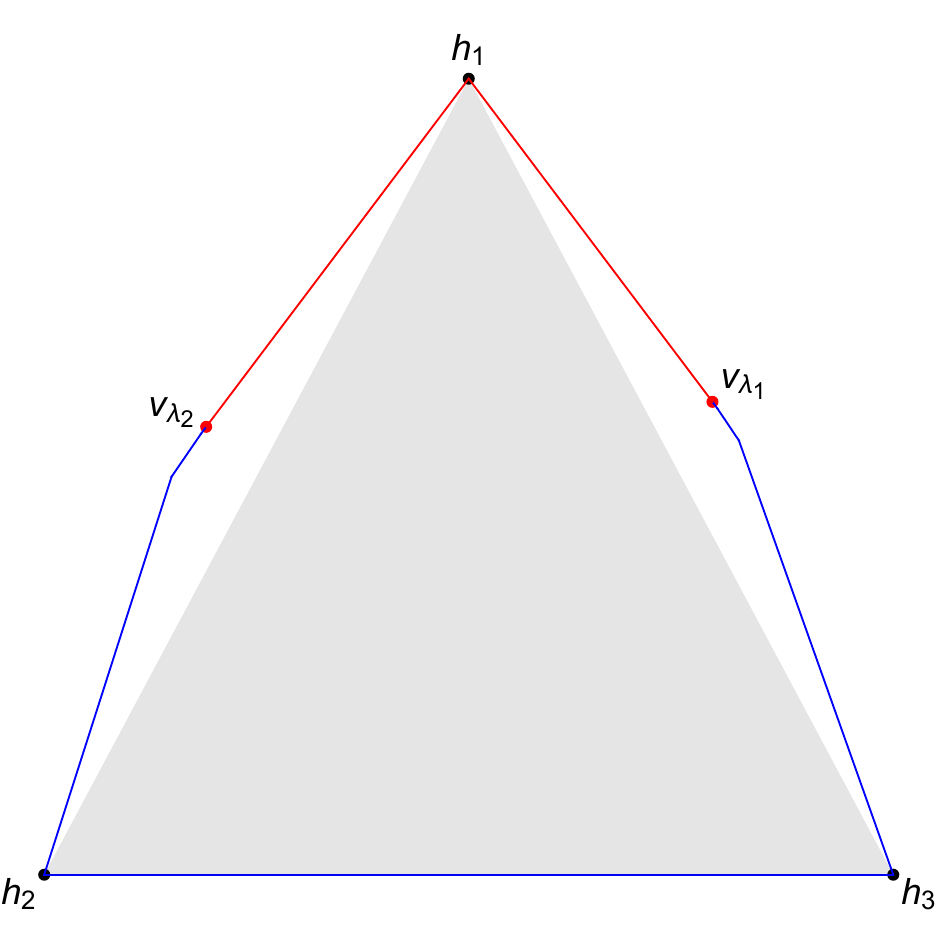}
  \end{minipage}
  \hfill
  \end{center}
  \caption{Example of the movable cone for $X$ given by the intersection of divisors of multidegree $(2,2,1)$ and $(2,1,2)$ in $\P^3\times \P^2 \times \P^2$. In this case $J = \{2,3\}$. On the left, the Movable cone obtained as the $\Bir(X)$-orbit of $\Nef(X)$ (in gray). Notice that the top right and top left thicker lines correspond to accumulations of chambers. On the right, the boundary of the Movable cone computed using Theorem \ref{mainThm2}. In blue, the cones corresponding to Theorem \ref{mainThm2} (1), and in red, the cones corresponding to Theorem \ref{mainThm2} (2).}
  \label{figureMovable}
\end{figure}

Finally, in Section \ref{section:NumericalDimension} we give a small application for these examples, expanding the possible values of the numerical dimension $\nu_{\text{vol}}^\R$ introduced in \cite{Lesieutre19} and \cite{Mccleerey19}. Lesieutre used Oguiso's example \cite[Proposition 1.4]{Oguiso14} to produce a divisor class with a numerical dimension $\kappa_\sigma^\R$ and $\nu_{\text{vol}}^\R$ equal to 3/2, giving a counterexample to the notion that all numerical dimension of divisors coincide. We show that if $|J| \geq 3$, then $X$ has infinitely many divisor classes with numerical dimension $\nu_{\text{vol}}^\R$ equal to $(\dim X)/2$, and if Conjecture \ref{conjectureLorentzian} is true, then this is the only possible value that can be obtained from eigenvectors.

\section*{Acknowledgements}
The author would like to thank Christopher Hacon for useful comments, discussions and constant support. The author is also grateful to John Lesieutre for valuable comments and suggestions.
This research was partially supported by NSF research grants no:  DMS-1952522, DMS-1801851 and by a grant from the Simons Foundation; Award Number: 256202.

\section{Preliminaries} \label{section:Preliminaries}

In this section we give some definitions and results that will be needed in this paper.

\subsection{Coxeter groups} \label{subsection:CG}

For a complete introduction to Coxeter groups we refer to \cite{Humphreys90}. Let $W$ be a finitely generated group and let $S = \{s_i\}_{i=1}^n$ be a finite set of generators. We say that the pair $(W,S)$ is a \emph{Coxeter system} if there are integers $m_{ij} \in \Z_{\geq 0} \cup \{\infty\}$ such that \begin{itemize}
\item $W = \langle s_i \mid (s_is_j)^{m_{ij}} = 1\rangle$,
\item $m_{ii} = 1$ for all $i$,
\item $m_{ij} = m_{ji} \geq 2$ or $= \infty$.
\end{itemize}

A group is called a \emph{Coxeter group} if there is a finite subset $S\subset W$ such that $(W,S)$ is a Coxeter system.

Let $J\subseteq \{1,\ldots,n\}$. Define $W_J$ as the subgroup of $W$ of all elements that can be generated by the set $\{s_j\mid j\in J\}$.

We associate a matrix $B$ to $(W,S)$ as follows:
\[ (B)_{ij} = \begin{cases} -\cos \frac{\pi}{m_{ij}} & \text{if } m_{ij} < \infty \\ -c_{ij} & \text{if } m_{ij} = \infty \end{cases} \] where the $c_{ij}$ are arbitrarily chosen real numbers such that $c_{ij} = c_{ji} \geq 1$. Classically, $c_{ij} = 1$, but after the work of Vinberg \cite{Vinberg71}, we can have $c_{ij} \geq 1$.

Let $V$ be a real vector space of dimension $n$, with basis $\Delta = \{\alpha_s\}_{s\in S}$ and denote $\alpha_{s_i}=\alpha_i$. We can define a bilinear form $\mathcal{B}$ on $V$ by $\mathcal{B}(\alpha_i,\alpha_j) = \alpha_i^T B \alpha_j = (B)_{ij}$, for $s_i,s_j\in S$. For a vector $v\in V$ define \[ \sigma_v(w) = w - 2\dfrac{\mathcal{B}(w,v)}{\mathcal{B}(v,v)}v. \] Then the homomorphism $\rho\colon W \to GL(V)$ defined by $s\mapsto \sigma_{\alpha_s}$ is a faithful geometric representation of the Coxeter group $W$. (For the classic case, see \cite[Section 5.4 Corollary]{Humphreys90}. For the general case, this is \cite[Theorem 2 (6)]{Vinberg71}.)

Define in the dual space $V^*$ the covex cone $D \subset V^*$ as the intersection of the half-spaces \[ D_s^+ := \{f\in V^* \mid f(\alpha_s) \geq 0 \} \] for all $s\in S$.

Denote the dual representation of $W$ on $V^*$ as $\rho^*$. Then we define the \emph{Tits cone $T$} as the union \[ T := \bigcup_{w\in W} \rho^*(w)(D). \] From \cite[Theorem 2 and Proposition 8]{Vinberg71} we have that $T$ is a convex cone and $D$ is a fundamental domain for the action of $W$.

Throughout this article we assume that $m_{ij} = \infty$ if $i\neq j$, and that $c_{ij}\in \R_{\geq 1}$. In this situation, the group $W$ is isomorphic to $\underbrace{\Z/2\Z \ast \ldots \ast \Z/2\Z}_{n}$, and the $n\times n$ matrix $M(n)_j$ associated to $\sigma_{s_j}$ with respect to the basis $\{\alpha_i\}$ is \[ M(n)_j = \begin{pmatrix}
1 & 0 & \cdots & 0 & 0 & 0 & \cdots & 0 \\ 
0 & 1 & \cdots & 0 & 0 & 0 & \cdots & 0 \\ 
\vdots & \vdots & \ddots & \vdots & \vdots & \vdots & \cdots & \vdots \\ 
0 & 0 & \cdots & 1 & 0 & 0 & \cdots & 0 \\ 
2c_{j\,1} & 2c_{j\,2} & \cdots & 2c_{j\,{j-1}} & -1 & 2c_{j\,{j+1}} & \cdots & 2c_{j\, n} \\ 
0 & 0 & \cdots & 0 & 0 & 1 & \cdots & 0 \\ 
\vdots & \vdots & \ddots & \vdots & \vdots & \vdots & \ddots & \vdots \\ 
0 & 0 & \cdots & 0 & 0 & 0 & \cdots & 1
\end{pmatrix}  \] Notice that the geometric representation depends on the matrix $B$, so we keep track of this by denoting our Coxeter system as $(W,S)_B$.
~\\

When studying the boundary of $T$ it is convenient to work on $\P V$ as follows. Choose $V_1$ a hyperplane such that each ray $\R_{>0} \alpha$, for $\alpha\in \Delta$, intersects at one point and denote it $\hat{\alpha}$. Let $\varphi$ be the linear form such that $V_1$ is $\varphi(v) = 1$. Define \[ \hat{v} = \frac{v}{\varphi(v)},\ \text{for }v\in V\setminus \{\varphi(v)=0\}. \] We will refer to points in $\P V$ as \emph{directions} of $V$.

For Theorem \ref{mainThm2} we need to study some properties of $W_J$. We say that the subgroup $W_J$ is \emph{Lorentzian} if the signature of the matrix $B_J$, obtained by removing the rows and columns with indices not in $J$, has signature $(|J|-1,1)$.

The main consequence of this property is that given an infinite reduced word $\mathbf{w} = s_{k_1}s_{k_2}\cdots$ in $W_J$, any injective sequence $\{w_i\cdot \widehat{x}\}$, where $w_i = s_{k_1}s_{k_2}\cdots s_{k_i}$, converges to the same direction, regardless of $\widehat{x} \in \P V$.

\begin{theorem}
Let $\mathbf{w}$ an infinite reduced word in a Lorentzian subgroup $W_J$. Then every injective convergent sequence $\{w_i\cdot \hat{x}\}$ converge to a same unique direction $\widehat{\gamma}(\mathbf{w})$.
\label{thmInfiniteWord}
\end{theorem}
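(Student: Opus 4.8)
The plan is to reduce the statement to the dynamics of a discrete reflection group on real hyperbolic space, and then exploit that $W_J$ preserves a Lorentzian form. First I would isolate the Lorentzian factor. Set $V_J := \mathrm{span}\{\alpha_j : j\in J\}$. For $j\in J$ the reflection $\sigma_{\alpha_j}$ preserves $V_J$ and fixes pointwise the $\mathcal{B}$-orthogonal complement $V_J^{\perp}$, since $\mathcal{B}(\cdot,\alpha_j)$ vanishes there; as $\mathcal{B}_J := \mathcal{B}|_{V_J}$ is nondegenerate (it has signature $(|J|-1,1)$), one gets a $W_J$-invariant splitting $V = V_J\oplus V_J^{\perp}$ on which $W_J$ acts trivially on $V_J^{\perp}$ and by its own geometric representation $(W_J,\{s_j\}_{j\in J})_{B_J}$ on $V_J$. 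Writing $\widehat{x}=[x_J+x^{\perp}]$: injectivity of $\{w_i\cdot\widehat{x}\}$ forces $x_J\neq 0$, and one checks (see below) that $\|w_ix_J\|\to\infty$, so $x^{\perp}$ is asymptotically negligible and $\{[w_ix_J]\}$ converges to the same limit as $\{w_i\cdot\widehat{x}\}$. Thus it suffices to show $\lim[w_ix_J]=\widehat{\gamma}(\mathbf{w})$ for the action of $W_J$ on $\P V_J$.

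Next I would pin down $\widehat{\gamma}(\mathbf{w})$ geometrically. Each $\alpha_j$ is $\mathcal{B}_J$-space-like ($\mathcal{B}_J(\alpha_j,\alpha_j)=1$), so $\sigma_{\alpha_j}$ is a reflection of hyperbolic space $\mathbb{H}:=\mathbb{H}^{|J|-1}\subset\P V_J$, and by Vinberg's theory \cite{Vinberg71} $W_J$ is a discrete reflection group with a convex fundamental polyhedron $\Pi$ (the hyperbolic realization of $D$) whose walls are the hyperplanes $\alpha_j^{\perp}\cap\mathbb{H}$. For $\mathbf{w}=s_{k_1}s_{k_2}\cdots$ with $w_i=s_{k_1}\cdots s_{k_i}$ the chambers $\Pi_i:=w_i\Pi$ form a reduced gallery; let $\Omega_i$ be the closed half-space of $\mathbb{H}$ bounded by the wall $w_i(\alpha_{k_{i+1}}^{\perp})$ that contains $\Pi_{i+1}$. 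Using the standard fact that a chamber $wD$ lies on the side of a wall $\alpha_s^{\perp}$ not containing $D$ precisely when $\ell(sw)<\ell(w)$, one gets $\Pi_j\subseteq\Omega_i$ for all $j>i$ and $\Omega_1\supsetneq\Omega_2\supsetneq\cdots$ with pairwise distinct bounding walls, any two consecutive of which are disjoint inside $\mathbb{H}$ — because after translating by $w_i^{-1}$ the $i$-th and $(i{+}1)$-st walls become $\alpha_{k_{i+1}}^{\perp}$ and $\sigma_{\alpha_{k_{i+1}}}(\alpha_{k_{i+2}})^{\perp}$, whose $\mathcal{B}_J$-unit normals pair to $c_{k_{i+1}k_{i+2}}\ge 1$, forcing the walls to be parallel or ultraparallel. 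Then $\bigcap_i\overline{\Omega_i}\subseteq\overline{\mathbb{H}}$ is nonempty, closed and geodesically convex, and contains no point of $\mathbb{H}$: such a point would be separated from $\Pi_0$ by all of the infinitely many distinct walls $\partial\Omega_i$ (or would lie on infinitely many of them), impossible in a locally finite wall arrangement at finite gallery distance. A geodesically convex subset of $\overline{\mathbb{H}}$ disjoint from $\mathbb{H}$ is a single point $\widehat{\gamma}(\mathbf{w})\in\partial\mathbb{H}$. Finally, fixing $\widehat{\delta}\in\mathrm{int}\,\Pi$, one has $w_i\widehat{\delta}\in\Pi_i\subseteq\Omega_{i-1}$, so every limit point of $\{w_i\widehat{\delta}\}$ lies in every $\overline{\Omega_k}$; hence $w_i\widehat{\delta}\to\widehat{\gamma}(\mathbf{w})$, and $\widehat{\gamma}(\mathbf{w})$ does not depend on $\widehat{\delta}$.

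Then I would produce the limit of a general orbit. Fix a time-like $\delta\in V_J$ with direction in $\mathrm{int}\,\Pi$ (such exist, since $\mathrm{int}\,\Pi$ consists of time-like directions), so $\|w_i\delta\|\to\infty$ by the previous step. For $y\in V_J\setminus\{0\}$ with $\|w_iy\|\to\infty$ and $[w_iy]\to p$, consider the $2$-plane $\Sigma_i:=w_i\langle y,\delta\rangle$: since $w_i$ preserves $\mathcal{B}_J$, the Gram matrix of $\mathcal{B}_J|_{\Sigma_i}$ in the basis $(w_iy,\,w_i\delta)$ equals the constant matrix $\bigl(\begin{smallmatrix}\mathcal{B}_J(y,y)&\mathcal{B}_J(y,\delta)\\ \mathcal{B}_J(y,\delta)&\mathcal{B}_J(\delta,\delta)\end{smallmatrix}\bigr)$, hence in the unit-vector basis it tends entrywise to $0$. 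If $p\neq\widehat{\gamma}(\mathbf{w})$, then $\Sigma_i$ converges to the genuine $2$-plane $\langle p,\widehat{\gamma}(\mathbf{w})\rangle$, on which $\mathcal{B}_J$ would vanish identically — impossible, since a Lorentzian form admits no totally isotropic $2$-plane. So $p=\widehat{\gamma}(\mathbf{w})$. It remains to justify $\|w_ix_J\|\to\infty$ whenever $\{w_i\cdot\widehat{x}\}$ is injective and convergent: for $x_J$ time-like this is forced by discreteness (a bounded orbit of $[x_J]$ would be an infinite subset of a compact part of $\mathbb{H}$), and when $x_J$ is space-like or light-like, a bounded orbit of $[x_J]$ would — using discreteness of $W_J$ (when $x_J^{\perp}$ is a wall) together with the North–South dynamics of the rank $\le 2$ subgroups $\langle s_{k_i}\rangle,\langle s_{k_i},s_{k_{i+1}}\rangle$ occurring along $\mathbf{w}$ — make $\{w_i\cdot\widehat{x}\}$ either eventually repeat or oscillate, contrary to hypothesis. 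This is the routine verification I would omit, and with it the theorem follows.

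The hard part will be the second step: showing the nested half-spaces $\Omega_i$ collapse to a single point of $\partial\mathbb{H}$ rather than to a positive-dimensional subset. The delicate regime is when many off-diagonal entries $c_{ij}$ equal $1$, so that consecutive walls of the gallery meet at infinity and no uniform metric contraction is available; there one must play the reducedness of $\mathbf{w}$ against the local finiteness of the $W_J$-tiling of $\mathbb{H}$ — this is exactly where the role of Cantat–Oguiso's ``chinese hat shells'' is taken over by the hyperbolic geometry of $\mathbb{H}^{|J|-1}$. Granting that, the reduction of the first step and the ``$\|w_ix_J\|\to\infty$'' claim of the third step are then routine.
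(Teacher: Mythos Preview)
The paper does not prove this theorem directly: its entire proof is a citation to four results of Chen--Labb\'e, who develop the theory of limit directions for Lorentzian Coxeter systems. So your proposal is necessarily a different route --- you are supplying an argument where the paper supplies a reference.

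Your strategy is sound and close in spirit to what Chen--Labb\'e actually do. The $W_J$-invariant splitting $V=V_J\oplus V_J^{\perp}$ is correct (nondegeneracy of $B_J$ gives both $V_J\cap V_J^{\perp}=0$ and the dimension count) and reduces the problem cleanly to $V_J$. Realizing $W_J$ as a discrete reflection group on $\mathbb{H}^{|J|-1}$ and showing that the nested half-spaces $\Omega_i$ along a reduced gallery collapse to a single ideal point is the right picture; the observation that consecutive walls have normals pairing to $c_{k_{i+1}k_{i+2}}\ge 1$ and hence cannot meet inside $\mathbb{H}$ is exactly what drives the nesting. Your Gram-matrix argument for pulling a general orbit to $\widehat{\gamma}(\mathbf{w})$ via the nonexistence of a totally isotropic $2$-plane in a Lorentzian space is elegant and correct, given its hypothesis. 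What you gain over the paper's one-line proof is a transparent geometric picture; what the citation buys is that the delicate cases you flag are already handled.

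The one place to push back is the step you call ``routine'': that $\|w_ix_J\|\to\infty$ whenever $\{w_i\cdot\widehat{x}\}$ is injective and convergent. For time-like $x_J$ this is indeed immediate from proper discontinuity on $\mathbb{H}$, but for space-like or null $x_J$ it is not. The $W_J$-action on the de~Sitter piece and on the light cone is not properly discontinuous, and orbits there can genuinely contract toward the origin (for instance the repelling null eigendirection under iterates of a loxodromic element), so one must really use the injectivity and convergence hypotheses to rule this out. Your sketch --- that boundedness forces repetition or oscillation via the rank-$\le 2$ dynamics along $\mathbf{w}$ --- is on the right track but is a genuine argument, not a triviality; this is essentially the content packaged in Chen--Labb\'e's Theorems~2.5 and~2.8 and their corollaries. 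If you want your proof to be fully self-contained, this is where the remaining work lies.
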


\begin{proof}
This result follow from \cite[Theorem 2.5]{ChenLabbe17}, \cite[Corollary 2.6]{ChenLabbe17}, \cite[Theorem 2.8]{ChenLabbe17} and \cite[Corollary 2.9]{ChenLabbe17}.
\end{proof}

\subsection{Calabi-Yau manifolds} \label{subsection:CY}

We define a \emph{Calabi-Yau manifold} as a smooth projective manifold $X$ such that $\O_X(K_X) \simeq \O_X$ and $H^1(X,\O_X) = 0$. In particular, a Calabi-Yau manifold in the strict sense, \textit{i.e.} a smooth projective manifold with $\O_X(K_X) \simeq \O_X$ and $H^0(X,\Omega_X^k) = 0$ for $0<k<\dim X$, is a Calabi-Yau manifold.

Denote the N\'eron-Severi group, generated by the numerical classes of line bundles, by $N^1(X)$. The rank of $N^1(X)$, $\rho(X)$, is the \emph{Picard number of $X$}. Let $N^1(X)_\R$ the real vector space generated by $N^1(X)$. Let $\Amp(X)\subseteq N^1(X)_\R$ the open cone of ample divisors, $\Nef(X)$ the closed cone of Nef divisors, $\BBig(X)$ the open cone of big divisors, and $\Eff(X)$ the cone of effective divisors. As usual, $\overline{\Amp(X)} = \Nef(X)$ and $\overline{\BBig(X)} = \overline{\Eff(X)}$. This last cone is called the \emph{pseudoeffective cone of $X$}.

We say that a divisor is \emph{movable} if the base locus of $|D|$ has a codimension 2. The \emph{movable cone $\Mov(X)$} is the closure of the convex cone generated by the classes of movable divisors. Define the \emph{movable effective cone ${\Mov\,}^e(X)$} as $\Mov(X) \cap \Eff(X)$. Similarly, define $\Nef^e(X) = \Nef(X) \cap \Eff(X)$.

Let $\Aut(X)$ be the group of automorphisms of $X$, and $\Bir(X)$ the group of birational automorphisms. By Theorem \ref{thmKFlops} we have that every birational automorphism is an isomorphism in codimension 1. So if $g\in \Bir(X)$ and $D$ is movable, then $g^* D$ is again movable. Hence $\Bir(X)$ acts naturally on $\Mov(X)$, $\Eff(X)$ and ${\Mov\, }^e(X)$. This is the setup for the Kawamata-Morrison conjecture (Conjecture \ref{ConjectureKM}).

We need a few more results related to Calabi-Yau manifolds:

\begin{theorem}[\cite{Kawamata88}, Theorem 5.7]
Any flopping contraction of a Calabi-Yau manifold is given by a codimension one face of $\Nef(X)$ up to automorphisms of $X$.
\label{thmKContraction}
\end{theorem}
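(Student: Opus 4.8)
The plan is to set up the standard dictionary between small crepant contractions and faces of the nef cone, and then to read off the codimension. Given a flopping contraction $f\colon X \to Y$, I would first associate to it the face
\[ F_f := \{ D \in \Nef(X) \mid D\cdot C = 0 \text{ for every curve } C \text{ contracted by } f\}. \]
Since $f$ is a contraction, pulling back an ample class $H$ on $Y$ gives $f^*H \in \Nef(X)$ lying in the relative interior of $F_f$, and because $f$ is birational, $f^*H$ is moreover big; as $f$ is small (an isomorphism in codimension one), the associated map is $f$ itself, so $f^*H$ lies in the movable cone $\Mov(X)$. The first task is to compute the dimension of $F_f$: by the duality between $\Nef(X) \subset N^1(X)_\R$ and the cone of curves in the dual space $N_1(X)_\R$ of one-cycles, $\codim F_f$ equals the rank of the span of the contracted curve classes in $N_1(X)_\R$, i.e. the relative Picard number $\rho(X/Y)$.

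Next I would argue that this codimension is one, which is the heart of the matter. A flopping contraction is extremal: it contracts the curves spanning a single $K_X$-trivial ray of the relative cone of curves, so that $\rho(X/Y)=1$ and $F_f$ is a facet of $\Nef(X)$ meeting the interior of $\Mov(X)$. For the converse, which shows that such facets genuinely \emph{give} flopping contractions, I would invoke the base-point-free theorem: a rational class $D$ in the relative interior of a codimension-one face satisfies $D - K_X \equiv D$ nef and big (using $K_X \equiv 0$), hence $D$ is semiample and defines a contraction $\phi_{|mD|}\colon X \to Y'$ for $m \gg 0$; movability of $D$ forces $\phi$ to be small, i.e. a flopping contraction. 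This identifies flopping contractions with the facets of $\Nef(X)$ interior to $\Mov(X)$.

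Finally I would account for the phrase \emph{up to automorphisms}. The group $\Aut(X)$ acts linearly on $N^1(X)_\R$ preserving $\Nef(X)$, $\Mov(X)$, and hence the set of its facets; two flopping contractions that differ by an element of $\Aut(X)$ produce faces in the same $\Aut(X)$-orbit, and conversely a facet determines its contraction only up to this action. Combining the two directions with this orbit bookkeeping yields the stated correspondence. Throughout, Theorem~\ref{thmKFlops} guarantees that the birational maps in play are isomorphisms in codimension one, so that movability is preserved and the facet structure is genuinely intrinsic.

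The main obstacle I anticipate is the codimension-one claim itself, namely that the contracted curves span only a single ray in $N_1(X)_\R$. A priori the exceptional locus of a small contraction could consist of several curves whose classes are linearly independent, producing a higher-codimension face; ruling this out is exactly the extremal-ray content and relies essentially on the crepant (Calabi--Yau) hypothesis, which makes every contraction $K_X$-trivial and lets one apply the cone and contraction machinery relative to $Y$. A secondary technical point is verifying that the semiample class produced by the base-point-free theorem yields a \emph{small} morphism rather than a divisorial one; this is where one must use that the chosen class lies in the interior of $\Mov(X)$ rather than merely on $\partial\Nef(X)$.
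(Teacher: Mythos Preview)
The paper does not prove this theorem: it is quoted from \cite{Kawamata88} as a preliminary result and used as a black box in the proof of Proposition~\ref{propBirIsoInvo}. There is therefore no proof in the paper to compare your proposal against.

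A few remarks on your sketch nonetheless. The only direction the paper actually uses is that a flopping contraction determines a codimension-one face, and you obtain this by declaring that a flopping contraction is, by definition, the contraction of a single $K_X$-trivial extremal ray, whence $\rho(X/Y)=1$. If that is taken as the definition, this direction is a tautology and the content of Kawamata's theorem lies elsewhere (for instance in the finiteness of such faces modulo $\Aut(X)$, which your outline does not address). If instead one starts from the weaker notion of a small $K_X$-trivial birational contraction without an extremality hypothesis, then the step ``$\rho(X/Y)=1$'' is exactly what needs to be proved, and your argument is circular at that point.

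Your converse also has a genuine gap. You assert that a semiample class in the relative interior of a facet of $\Nef(X)$ gives a small morphism because the class is movable. But the pullback of an ample divisor along a \emph{divisorial} contraction is base-point free, hence movable, and sits in the relative interior of the corresponding facet; movability alone does not distinguish small from divisorial contractions. Likewise, nothing you wrote excludes the possibility that the facet corresponds to a fibre-type contraction (where $f^*H$ is nef but not big). To rule out the divisorial case on a Calabi--Yau one typically argues via the discrepancy of the exceptional divisor or via the structure of $\Mov(X)$, but this has to be supplied.
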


\begin{theorem}[\cite{Kawamata08}, Theorem 1]
Every element of $\Bir(X)$ can be decomposed as a sequence of flops, and an automorphism of $X$. That is, for every $g\in \Bir(X)$ there is a sequence of flops $\gamma_i \colon X_i \dashrightarrow X_{i-1}$ such that \[ g = f\circ \gamma_n\circ \ldots \circ \gamma_1, \] with $X_0 = X_n = X$, and $f\in \Aut(X)$.
\label{thmKFlops}
\end{theorem}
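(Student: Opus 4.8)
The plan is to prove Theorem \ref{mainThm2} by translating the question about the boundary $\partial\Mov(X)$ into a question about the boundary of the Tits cone $T$ of the Coxeter group $W = \underbrace{\Z/2\Z \ast \cdots \ast \Z/2\Z}_{|J|}$, and then applying the convergence result Theorem \ref{thmInfiniteWord}. By the results proved in Sections \ref{section:ComputingBir} and \ref{section:MovableCone}, we may identify $\Mov(X)$ (more precisely ${\Mov\,}^e(X)$) with the Tits cone $T$, with $\Nef(X)$ corresponding to the fundamental domain $D$ and the extremal rays $h_1,\ldots,h_l$ corresponding to the walls. Under this dictionary the generators $\iota_j$ (for $j\in J$) act as the reflections $\sigma_{\alpha_j}$, so $\Bir(X)$ acts on $\Mov(X)$ exactly as $W$ acts on $T$. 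The first step is therefore to set up this correspondence carefully and to record how the walls of $D$ split into two types: the codimension-one faces $\{\sum_{k\neq i}a_k h_k\}$ with $i\notin J$, which are \emph{not} flopping walls and hence lie on $\partial\Mov(X)$, and the walls $\{\sum_{k\neq i,j}a_kh_k\}$ with $i,j\in J$ across which the reflections $\iota_i,\iota_j$ act.

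Next I would show that every boundary point of $\Mov(X)$ that does not lie in the $\Bir(X)$-orbit of a type (1) face must be a limit of chambers, i.e.\ an accumulation direction of the $W$-action. The interior walls of chambers in the orbit of $D$ are flopping walls and get identified across flops, so they are in the interior of $\Mov(X)$; the only way to reach the boundary is either through a stable wall (type (1)) or by taking an infinite sequence of reflections. For the latter, Theorem \ref{thmInfiniteWord} is the key input: any infinite reduced word $\mathbf{w}$ in the Lorentzian group $W$ produces a single limit direction $\widehat\gamma(\mathbf{w})$, independent of the starting direction. This is where the Lorentzian hypothesis is essential, since it guarantees the signature $(|J|-1,1)$ needed for the attracting/repelling eigenvalue structure of the products $\sigma_{\alpha_i}\sigma_{\alpha_j}$.

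The heart of the argument is then to identify these limit directions explicitly with the eigenvector data in part (2). For a pair $i,j\in J$ the product $(\iota_i\iota_j)^*$ is a product of two reflections; when $n\geq 2$ the bilinear form restricted to $\mathrm{span}(\alpha_i,\alpha_j)$ is indefinite (hyperbolic), so $(\iota_i\iota_j)^*$ has a unique eigenvalue $\lambda>1$ with an associated eigenvector $v_\lambda$, and the infinite word $(s_is_j)^\infty$ converges precisely to $\widehat{v_\lambda}$. I would prove that the accumulation set contributed along the wall $\{\sum_{k\neq i,j}a_k h_k\}$ is exactly the cone $\{a_\lambda v_\lambda + \sum_{k\neq i,j}a_k h_k\mid a_k,a_\lambda\geq 0\}$ by combining the single-wall eigenvector computation with Theorem \ref{thmInfiniteWord} to handle arbitrary infinite words. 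When $n=1$ the form on $\mathrm{span}(\alpha_i,\alpha_j)$ is degenerate (parabolic rather than hyperbolic), $(\iota_i\iota_j)^*$ is unipotent with no eigenvalue exceeding $1$, and the limit direction collapses to the shared vertex, which is why $v_\lambda=0$ in that case; this recovers the ``chinese hat'' picture of Cantat--Oguiso. Taking the $\Bir(X)$-orbit of all these pieces and passing to the closure accounts for every boundary direction, giving both inclusions.

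The main obstacle I expect is the reverse inclusion, namely showing that the closure of the described $\Bir(X)$-orbits really is \emph{all} of $\partial\Mov(X)$ and that no boundary directions are missed. Concretely, one must verify that an arbitrary infinite reduced word $\mathbf{w}=s_{k_1}s_{k_2}\cdots$ has its limit direction $\widehat\gamma(\mathbf{w})$ lying in the closure of the union of the pair-cones from (2): the delicate point is that a general word is not eventually periodic of the form $(s_is_j)^\infty$, so one cannot directly read off a single eigenvector. The resolution is to use that the limit directions $\widehat\gamma(\mathbf{w})$ form a $W$-stable set whose closure is generated by the ``rank-two'' limit points $\widehat{v_\lambda}$ (this is the content of the limit-root/imaginary-cone theory, and is why Theorem \ref{thmInfiniteWord} suffices once combined with density of the $W$-orbit of the rank-two accumulation points). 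I would isolate this as the technical lemma and spend most of the proof there, after which assembling the two families (1) and (2) and taking closures is routine.
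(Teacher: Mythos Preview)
Your proposal addresses the wrong statement. The theorem labeled \texttt{thmKFlops} is Kawamata's flop decomposition result: every birational automorphism of a Calabi--Yau manifold factors as a composition of flops followed by an automorphism. This is a cited result from \cite{Kawamata08}; the paper does not reprove it and simply quotes it as a preliminary. Your write-up, on the other hand, is a plan for proving Theorem~\ref{mainThm2}, the description of $\partial\Mov(X)$ in terms of eigenvector cones. These are different theorems with entirely different content.

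As a plan for Theorem~\ref{mainThm2}, your outline is broadly in line with what the paper does (Proposition~\ref{propConesBoundary} and the proof of Theorem~\ref{thmMovStructure}): identify $\Mov(X)$ with $T_J$, separate the boundary into the stable faces (type (1)) and accumulation directions of infinite reduced words, and use Theorem~\ref{thmInfiniteWord} together with the eigenvalue structure of $\rho(s_is_j)$ to identify the latter with the cones of type (2). The paper handles the reverse inclusion slightly differently than you anticipate: rather than appealing to density of rank-two limit roots in the full limit set, it argues directly that for an arbitrary infinite reduced word $\mathbf{w}$ with fixed vertices $\{v_{k_1},\ldots,v_{k_r}\}$, one can choose a type-(2) cone $C'$ containing those vertices and show via Theorem~\ref{thmInfiniteWord} that the sequence $\{w_i\cdot C'\}$ converges to the same limit cone $C$, so $C$ lies in the closure of the $W_J$-orbit of $C'$. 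This avoids invoking the imaginary-cone machinery you allude to.

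But none of this is a proof of the flop-decomposition theorem you were asked about. If the task is genuinely to prove \texttt{thmKFlops}, you would need to run Kawamata's MMP-with-scaling argument (or cite it), not the Coxeter/Tits-cone analysis.
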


\begin{theorem}[\cite{CantatOguiso15}, Theorem 3.1]
Let $n\geq 3$ be an integer and $V$ be a Fano manifold of dimension $(n+1)$. Let $M$ be a smooth member of the linear system $|-K_V|$. Let $\eta\colon M\to V$ be the natural inclusion. Then:
\begin{enumerate}
\item $M$ is a Calabi-Yau manifold of dimension $n\geq 3$.
\item The pull-back morphism $\eta^*\colon \Pic(V)\to \Pic(M)$ is an isomorphism, and it induces an isomorphism of Nef cones:
\[ \eta^*(\Nef(V)) \simeq \Nef(M). \]
\item $\Aut(M)$ is a finite group.
\end{enumerate}
\label{thmCYFano}
\end{theorem}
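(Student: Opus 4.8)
The plan is to treat the three assertions in turn, since each rests on a different classical input.

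For (1), smoothness and $\dim M = n$ are immediate, as $M$ is a smooth divisor in the $(n+1)$-fold $V$. Triviality of $K_M$ follows from adjunction: $K_M = (K_V + M)|_M$, and $M \in |-K_V|$ means $\O_V(M) \simeq \O_V(-K_V)$, so $K_V + M \sim 0$ and hence $\O_M(K_M) \simeq \O_M$. For the vanishing $H^1(M, \O_M) = 0$ I would use the ideal-sheaf sequence $0 \to \O_V(-M) \to \O_V \to \O_M \to 0$, in which $\O_V(-M) \simeq \O_V(K_V)$. Its long exact sequence gives the portion $H^1(V,\O_V) \to H^1(M,\O_M) \to H^2(V,\O_V(K_V))$. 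Because $V$ is Fano, $-K_V$ is ample and $H^j(V,\O_V)=0$ for all $j \geq 1$ (Kodaira vanishing, or rational connectedness); by Serre duality $H^2(V,\O_V(K_V)) \simeq H^{n-1}(V,\O_V)^\vee$, which also vanishes since $n-1 \geq 2$. Hence $H^1(M,\O_M)=0$ and $M$ is a Calabi--Yau manifold in the sense of the paper.

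For (2), the Picard isomorphism is exactly the Grothendieck--Lefschetz theorem on Picard groups: $M$ is a smooth ample divisor in the smooth variety $V$ with $\dim V = n+1 \geq 4$, so restriction $\eta^*\colon \Pic(V)\to\Pic(M)$ is an isomorphism, and correspondingly $\eta^*\colon N^1(V)_\R \to N^1(M)_\R$ is a linear isomorphism. One inclusion of Nef cones is formal: the restriction of a nef class is nef, so $\eta^*(\Nef(V)) \subseteq \Nef(M)$. The content is the reverse inclusion, equivalently (dualizing through the isomorphism $\eta_*\colon N_1(M)_\R \to N_1(V)_\R$) the statement $\overline{NE}(V) \subseteq \eta_*\overline{NE}(M)$. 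Since $V$ is Fano, the Cone Theorem gives that $\overline{NE}(V)$ is rational polyhedral, generated by finitely many extremal rays each spanned by a rational curve, so it suffices to realize each extremal class as a pseudoeffective class on $M$. I expect this to be the main obstacle. Concretely, for an extremal ray $R = \R_{\geq 0}[C]$ with contraction $\phi_R\colon V \to Y$, one has $M\cdot C > 0$ (as $M \sim -K_V$ and $-K_V\cdot C>0$), so $M$ is not contracted. If some fiber $F$ of $\phi_R$ has $\dim F \geq 2$, then $M\cap F$ is a nonempty subvariety of dimension $\geq 1$ swept out by curves of class proportional to $[C]$, giving genuine curves on $M$ in that class. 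The delicate case is that of one-dimensional fibers (fiber-type or conic-bundle rays), where $M$ meets a general fiber only in points: here I would replace ``lies on $M$'' by ``is pseudoeffective on $M$,'' using a covering family $\{F_t\}$ of rational curves representing $[C]$ together with the isomorphism $N_1(M)\cong N_1(V)$, so that $\eta_*^{-1}[C]$ is a limit of effective classes on $M$ and hence lies in $\overline{NE}(M)$. Combining all extremal rays yields $\overline{NE}(V) = \eta_*\overline{NE}(M)$ and therefore $\Nef(M) = \eta^*(\Nef(V))$.

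For (3), note first that $\Nef(M) = \eta^*(\Nef(V))$ is rational polyhedral, since $\Nef(V)$ is (again because $V$ is Fano). The group $\Aut(M)$ acts on the lattice $N^1(M)$ preserving $\Nef(M)$, hence permuting its finitely many extremal rays; as these rays span $N^1(M)_\R$ and the action is by integral matrices, the image of $\Aut(M) \to \mathrm{GL}(N^1(M))$ is finite. For the kernel, I would observe that $\Aut^0(M)$ is trivial: its Lie algebra is $H^0(M,T_M) \simeq H^0(M,\Omega_M^{n-1})$ using $K_M \simeq \O_M$, and by the Lefschetz hyperplane theorem $H^{n-1}(M)$ carries the Hodge structure of $H^{n-1}(V)$, which has $h^{n-1,0}(V)=0$ since the Fano $V$ is rationally connected; thus $H^0(M,T_M)=0$. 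Any automorphism acting trivially on $N^1(M)$ fixes an ample class $H$ and so lies in the polarized automorphism group $\Aut(M,H)$, a finite-type algebraic group whose identity component is $\Aut^0(M)=\{e\}$, hence finite. With finite image and finite kernel, $\Aut(M)$ is finite.
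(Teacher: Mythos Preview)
Your treatment of parts (1) and (3) is essentially the paper's proof. For (1), both you and the paper use adjunction plus the ideal-sheaf sequence $0\to\O_V(K_V)\to\O_V\to\O_M\to 0$; the paper additionally notes that the same sequence kills all intermediate $h^k(\O_M)$, hence (by Hodge symmetry) all $h^0(\Omega_M^k)$, which it then reuses in (3) to get $h^0(T_M)=0$ --- you reach the same vanishing via Lefschetz on $H^{n-1}$. For (3), both arguments combine $h^0(T_M)=0$ with rational polyhedrality of $\Nef(M)$ to pass through a polarized automorphism group.

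The difference, and the gap, is in part (2). You correctly isolate the nontrivial inclusion $\overline{NE}(V)\subseteq \eta_*\overline{NE}(M)$ and correctly flag the one-dimensional-fiber case as the obstacle. But your proposed resolution there is not an argument: having a covering family $\{F_t\}$ of curves in $V$ in the class $[C]$, together with the linear isomorphism $N_1(M)_\R\cong N_1(V)_\R$, does \emph{not} by itself exhibit $\eta_*^{-1}[C]$ as a limit of effective $1$-cycles \emph{on $M$}. The curves $F_t$ live on $V$, and when fibers are one-dimensional the intersection $M\cap F_t$ is zero-dimensional; nothing you wrote produces curves on $M$ whose classes approach $\eta_*^{-1}[C]$. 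This step is exactly the content of Koll\'ar's theorem in the appendix to \cite{Borcea89}, which the paper simply cites: $\eta_*\colon \overline{NE}(M)\to\overline{NE}(V)$ is an isomorphism for a smooth anticanonical section of a Fano of dimension $\geq 4$. The proof there uses deformation theory of rational curves (bend-and-break type arguments) to move extremal rational curves of $V$ onto $M$, and is not the limiting argument you sketch. So either cite Koll\'ar's result as the paper does, or replace your last sentence for the one-dimensional case with an actual deformation-theoretic argument; as written that case is unjustified.
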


\begin{proof}
By the adjunction formula we have that $\O_M(K_M) \simeq \O_M$, and by the Lefschetz hyperplane theorem, $\pi_1(M) \simeq \pi_1(V) = \{1\}$, because $V$ is Fano, and so simply connected. Now, taking the long exact sequence in cohomology induced by the exact sequence \[ 0 \to \O_V(K_V) \to \O_V \to \O_M \to 0\] we get that $h^{k}(\O_M) = 0$ for $0<k <n$, and by Hodge symmetry we obtain that $h^0(\Omega^k_M) = 0$ for $0<k<n$. This proves (1).

The first part of (2) follows from Lefschetz hyperplane theorem, and the second follows from a result of Koll\'ar \cite[Appendix]{Borcea89} which says that the natural map $\eta_*\colon \overline{NE}(M) \to \overline{NE}(V)$ is an isomorphism. The result follows by taking dual cones.

For (3), notice that part (1) implies that $h^0(T_M)= 0$ and so $\dim \Aut(M) = 0$. Given that $\Nef(M)$ is rational polyhedral, there exists a divisor class $h$ that is fixed by $\Aut(M)$. Consider the embedding $M\to \P(H^0(M,L^{\otimes k})^\vee)$, with $L$ a line bundle such that $c_1(L) = h$. We then have that the image of $\Aut(M)$ is the algebraic subgroup of $\mathrm{PGL}(H^0(M,L^{\otimes k})^\vee)$ that preserves the image of $M$. Then $\Aut(M)$ is finite because $\dim \Aut(M) = 0$.
\end{proof}

\subsection{Calabi-Yau complete intersection of products of projective spaces} \label{subsection:CYCI} \mbox{}

Let $l\geq 2$, and $\{n_1, n_2 , \ldots , n_l\}$ be integers greater or equal to 1. In order to apply Theorem \ref{thmCYFano} we require that $\sum n_i \geq 4$, and that if $l=2$, then $(n_1,n_2)\neq (2,2)$. Define

\begin{align*}
P(n_1,\ldots, n_l) &:=\  \P^{n_1} \times\cdots \times \P^{n_l}\\
P(n_1,\ldots, n_l)_j &:=\  \P^{n_1} \times\cdots \times \P^{n_{j-1}} \times \P^{n_{j+1}} \times \cdots \times \P^{n_l}
\end{align*}
and denote the natural projections as 
\begin{align*}
p^j: &\ P(n_1,\ldots,n_l) \to \P^{n_j}\\
p_j: &\ P(n_1,\ldots, n_l) \to P(n_1,n_2,\ldots, n_l)_j
\end{align*}
Let $H_j$ be the divisor class of $(p^j)^*(\O_{\P^{n_j}}(1))$. Then $P(n_1,\ldots,n_l)$ is a Fano manifold of dimension $\sum n_i$ that satisfies
\begin{align*}
N^1(P(n_1,\ldots,n_l)) &= \bigoplus_{j=1}^{l} \R H_j,\\
-K_{P(n_1,\ldots, n_l)} &= \sum_{j=1}^l (n_j + 1)H_j,\\
\Nef(P(n_1,\ldots,n_l)) &= \left\{ \sum a_iH_i\mid a_i \in \R_{\geq 0}\text{ for all }i\right\}
\end{align*}

Set $n := \min\{n_i\}$ and $J = \{j \mid n_j = n\}$.

Let $X$ be the complete intersection of general ample divisors $D_1, \ldots, D_m$, with $m\leq n$, such that $\sum D_i = -K_{P(n_1,\ldots, n_l)}$. 

\begin{proposition}
$X$ is a Calabi-Yau manifold with $\rho(X) = l$ and $\Nef(X)$ is a simplicial cone generated by $h_j := \eta^*(H_j)$, where $\eta\colon X \to P(n_1,\ldots, n_l)$ is the natural inclusion.
\end{proposition}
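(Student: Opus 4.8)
The plan is to peel off the hypersurfaces $D_1,\ldots,D_m$ one at a time and reduce everything to Theorem \ref{thmCYFano} together with the Lefschetz hyperplane theorem. Write $P := P(n_1,\ldots,n_l)$, and for a general choice of the $D_i$ put $V_0 := P$ and $V_i := D_1\cap\cdots\cap D_i$, so that $V_m = X$ and, by Bertini, each $V_i$ is a smooth projective manifold of dimension $\sum_k n_k - i$. The first step is the numerical bookkeeping: from $l\ge 2$, $\sum_k n_k\ge 4$, the exclusion of $(n_1,n_2)=(2,2)$ when $l=2$, and $m\le n:=\min_k n_k$, a short case analysis (splitting on $n=1$, $n=2$, $n\ge 3$) gives $\dim X=\sum_k n_k-m\ge 3$, hence $\dim V_i\ge 4$ for every $i<m$. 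Applying adjunction repeatedly yields $-K_{V_i}=(D_{i+1}+\cdots+D_m)|_{V_i}$, which is ample for $i<m$ (so each such $V_i$ is Fano) and trivial for $i=m$ (so $K_X\sim 0$).

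To see that $X$ is Calabi-Yau it remains to check $H^1(X,\O_X)=0$, and this follows from part (1) of Theorem \ref{thmCYFano} applied to the last inclusion: $V_{m-1}$ is a Fano manifold of dimension $\ge 4$ and $X\in|-K_{V_{m-1}}|$, so $h^k(\O_X)=0$ for $0<k<\dim X$, in particular for $k=1$ since $\dim X\ge 3$ (alternatively one iterates the sequences $0\rightarrow\O_{V_{i-1}}(-D_i)\rightarrow\O_{V_{i-1}}\rightarrow\O_{V_i}\rightarrow 0$ together with Kodaira vanishing on $P$). For the Picard number, the Lefschetz hyperplane theorem applies to each inclusion $V_i\hookrightarrow V_{i-1}$ of an ample divisor with $\dim V_i\ge 3$: restriction induces an isomorphism $\Pic(V_{i-1})\xrightarrow{\sim}\Pic(V_i)$ and preserves the vanishing of $h^1(\O)$. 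Starting from $\Pic(P)=\bigoplus_j\Z H_j$ one obtains $\Pic(X)=\bigoplus_j\Z h_j$ with $h_j=\eta^*H_j$, hence $\rho(X)=l$ and $\{h_j\}$ is a basis of $N^1(X)_\R$.

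For the nef cone, the inclusion $\eta^*\Nef(P)=\sum_j\R_{\ge 0}h_j\subseteq\Nef(X)$ is automatic, since restricting an ample (resp.\ nef) class to a subvariety keeps it ample (resp.\ nef). For the reverse inclusion I would invoke, at each inclusion $V_i\hookrightarrow V_{i-1}$, the theorem of Koll\'ar used in the proof of Theorem \ref{thmCYFano}(2): restriction induces an isomorphism $\overline{NE}(V_i)\xrightarrow{\sim}\overline{NE}(V_{i-1})$ of Mori cones, so, dualizing through the N\'eron-Severi isomorphism above, $\Nef(V_i)=\sum_j\R_{\ge 0}(H_j|_{V_i})$; passing down to $X$ gives $\Nef(X)=\sum_j\R_{\ge 0}h_j$, which is simplicial because $\Nef(P)=\sum_j\R_{\ge 0}H_j$ is and the $h_j$ form a basis. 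I expect this to be the main obstacle: Koll\'ar's statement as quoted concerns a smooth \emph{anticanonical} member of a Fano manifold, which is the situation only at the last step $i=m$, whereas for $1\le i\le m-1$ the step $V_i\hookrightarrow V_{i-1}$ cuts $V_{i-1}$ along a general ample but non-anticanonical divisor, so one must check that his argument still applies in that generality; the key input is that Fano manifolds are uniruled, so that rational curves on $V_{i-1}$ degenerate onto the ample divisor $V_i$. A more self-contained alternative for most indices: whenever $n_j>m$, a general fiber of the projection $p_j|_X\colon X\rightarrow P(n_1,\ldots,n_l)_j$ is a complete intersection of dimension $n_j-m\ge 1$ in a fiber $\P^{n_j}$ of $p_j$, and any curve $C_j$ in it satisfies $h_k\cdot C_j=0$ for $k\ne j$ and $h_j\cdot C_j>0$, forcing the $h_j$-coefficient of every nef class to be $\ge 0$; this settles all indices when $m<n$, leaving only the indices $j\in J$ with $n_j=m=n$ to be handled via Theorem \ref{thmCYFano}.
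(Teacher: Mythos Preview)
Your argument is correct and follows exactly the route the paper takes: Bertini gives smoothness of the partial intersections $V_i$, adjunction shows each $V_i$ with $i<m$ is Fano, and then one appeals to Theorem~\ref{thmCYFano}. The paper's proof is much terser and does not spell out the iteration; in particular it does not address the point you flag, namely that Koll\'ar's result as quoted in the proof of Theorem~\ref{thmCYFano}(2) is stated only for the anticanonical step $X\in|-K_{V_{m-1}}|$.

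Your concern is legitimate, but the workaround you give at the end can be made uniform in $j$ by shifting it one step up. Rather than looking for test curves on $X$ (where the generic fiber of $p_j|_X$ is zero-dimensional exactly when $j\in J$, forcing you to treat those indices separately), look for them on $V_{m-1}$: over a general point of $P(n_1,\ldots,n_l)_j$ the fiber of $p_j|_{V_{m-1}}$ is a complete intersection of $m-1$ hypersurfaces in $\P^{n_j}$, hence of dimension $n_j-(m-1)\ge 1$ for \emph{every} $j$. Any curve $C$ in such a fiber satisfies $H_k|_{V_{m-1}}\cdot C=0$ for $k\neq j$ and $H_j|_{V_{m-1}}\cdot C>0$, so each coefficient of a nef class on $V_{m-1}$ is non-negative. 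This computes $\Nef(V_{m-1})=\sum_j\R_{\ge 0}\,H_j|_{V_{m-1}}$ directly, after which a \emph{single} application of Theorem~\ref{thmCYFano} to $X\in|-K_{V_{m-1}}|$ gives $\Nef(X)=\sum_j\R_{\ge 0}\,h_j$, with no need to iterate Koll\'ar's theorem through the non-anticanonical steps.
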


\begin{proof}
The divisors $D_1,\ldots, D_m$ are chosen generally, so by Bertini's theorem we get that $X$, and all partial intersections $D_{i_1}\cap D_{i_2}\cap \cdots \cap D_{i_r}$ are smooth. By the adjunction formula we get that each partial intersection is a Fano manifold. The conditions that $\sum n_i \geq 4$ and that if $l=2$, then $(n_1,n_2) \neq (2,2)$, along with the restriction on the codimension of $X$, implies that we can use Theorem \ref{thmCYFano} to obtain that $\eta^*(\Nef(\P)) \simeq \Nef(X)$.
\end{proof}

Define the projections $\pi_j:= p_j\circ \eta\colon X \to P(n_1,\ldots,n_l)_j$. By taking general divisors, these projections are surjective, and if $n_j = n$, then $\pi_j$ defines a birational involution $\iota_j$. This follows from the following result.

\begin{proposition}
If $n_j=n$ and $\codim X = n$, then $\pi_j$ is generically 2-to-1.
\end{proposition}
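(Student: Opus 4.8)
The statement to prove is that if $n_j = n = \min\{n_i\}$ and $\codim X = m = n$, then the projection $\pi_j\colon X \to P(n_1,\ldots,n_l)_j$ is generically $2$-to-$1$. The idea is to compute the degree of a general fibre of $\pi_j$ by intersecting $X$ with a general fibre $F$ of the ambient projection $p_j\colon P(n_1,\ldots,n_l) \to P(n_1,\ldots,n_l)_j$. Such a fibre is $F \simeq \P^{n_j} = \P^n$, and $X \cap F$ is the scheme cut out in $\P^n$ by the restrictions $D_1|_F, \ldots, D_m|_F$ of the $m = n$ defining divisors. Since the class of $D_i$ in $P(n_1,\ldots,n_l)$ is $\sum_k d_{ik} H_k$, its restriction to the fibre $F$ is $\O_{\P^n}(d_{ij})$, and $\pi_j$ restricted to a general fibre is a finite morphism of degree $\prod_{i=1}^m d_{ij}$ by Bézout, provided that $X\cap F$ is finite of the expected dimension $0$ for general choices — which follows from the Bertini-type genericity already invoked (the partial intersections are smooth of the expected codimension, and one applies this to the general fibre).

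The crux is therefore the numerical claim $\prod_{i=1}^m d_{ij} = 2$. First I would record the constraint coming from $\sum_{i=1}^m D_i = -K_{P}= \sum_{k=1}^l (n_k+1)H_k$, which gives $\sum_{i=1}^m d_{ik} = n_k + 1$ for every $k$; in particular $\sum_{i=1}^m d_{ij} = n_j + 1 = n+1$. Next I would use the hypothesis that each $D_i$ is \emph{ample} on $P$, which forces $d_{ik} \geq 1$ for all $i$ and all $k$. Applying this with $k = j$: we have $m = n$ positive integers $d_{1j},\ldots,d_{nj}$ summing to $n+1$, hence exactly one of them equals $2$ and the rest equal $1$, so $\prod_i d_{ij} = 2$. (The same counting with a general index $k$ shows $\prod_i d_{ik} \le 2^{?}$, but we only need the case $k=j$.) Combining with the Bézout count gives that a general fibre of $\pi_j$ consists of $2$ reduced points, i.e.\ $\pi_j$ is generically $2$-to-$1$.

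I would also need to confirm that $\pi_j$ is actually generically finite (not, say, generically finite onto a proper subvariety, or with positive-dimensional general fibre), but this is immediate: the general fibre of $\pi_j$ over a point of the image is contained in $X \cap F$ for the corresponding ambient fibre $F$, and we have just shown $X\cap F$ is finite, so $\dim X = \dim \pi_j(X) \le \dim P(n_1,\ldots,n_l)_j$; since equality of dimensions holds ($\dim X = \sum_i n_i - m$ and $\dim P(n_1,\ldots,n_l)_j = \sum_{i\ne j} n_i = \sum_i n_i - n_j = \sum_i n_i - n$, which equals $\sum_i n_i - m$ exactly because $m = n$), the map $\pi_j$ is dominant and generically finite, with general fibre of length $2$.

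**Main obstacle.**
The only real subtlety is verifying that for a \emph{general} choice of the $D_i$ the scheme $X\cap F$ is genuinely $0$-dimensional and reduced for $F$ a general fibre of $p_j$ — a priori one must rule out that $X$ contains an entire fibre of $p_j$, or meets the general fibre in something positive-dimensional. This is handled by a relative Bertini / generic smoothness argument: the linear system of each $D_i$ restricts to a base-point-free (indeed very ample) system on each fibre $\P^{n_j}$, so for general members the intersection with a general fibre has the expected dimension and is smooth, hence a disjoint union of reduced points. I expect this to be the step requiring the most care to state cleanly, though it is essentially a repackaging of the Bertini argument already used to establish smoothness of $X$ and its partial intersections.
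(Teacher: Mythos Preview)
Your proposal is correct and follows essentially the same approach as the paper: both arguments reduce to the combinatorial observation that $n$ positive integers $d_{1j},\ldots,d_{nj}$ summing to $n_j+1=n+1$ must be $(1,\ldots,1,2)$ in some order, together with the dimension count $\dim X = \dim P(n_1,\ldots,n_l)_j$. Your write-up is more careful (spelling out the B\'ezout count and the reducedness of the general fibre), while the paper's proof records only the combinatorics and the dimension equality and leaves the rest implicit.
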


\begin{proof}
Write $D_i = \sum_{k=1}^l a_{ik}H_k$, where $a_{ik}\geq 1$. Since $\sum D_i = -K_{P(n_1,\ldots,n_l)}$, we have that $\sum_{i=1}^n a_{ij} = n_j  + 1$. When $n_j = n$ this means that one, and only one, $a_{ij}$ has to be equal to 2 (and all the other $a_{ij}$ are equal to 1). Because $\dim X = \dim P(n_1,\ldots, n_l)_j$ we get that the projection $\pi_j$ is generically 2-to-1.
\end{proof}

We write $\P(\mathbf{n})$ instead of $P(n_1,\ldots,n_l)$ when there is no possible confusion.

\section{Computing \texorpdfstring{$\Bir(X)$}{Bir(X)}} \label{section:ComputingBir}

The goal of this section is to prove Theorem \ref{mainThm} (a) and (b). More precisely, we will show that the group generated by the involutions $\iota_j$, with $j\in J$, is isomorphic to $W_J$, where $(W,S)$ is the Coxeter system given by $S = \{s_i\}_{i=1}^l$ and $m_{ij} = \infty$.

Throughout this section we assume that $\codim X = n$. See Remark \ref{remarkCodimensionBir} and Remark \ref{remarkCodimensionMov} for the case $\codim X < n$.

We begin by showing that the projections $\pi_j = p_j\circ \eta \colon X\to \P_j$, where $n_j=n$, are small, so that the non-finite locus of $\pi_j$ has codimension at least 2 in $X$.

\begin{proposition}[\cite{Skauli17}, Proposition 8.3.3] 
The maps \[ \pi_j \colon X\to\P_j,\] where $n_j = n$, are small.
\end{proposition}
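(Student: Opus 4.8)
The plan is to show that for each $j$ with $n_j = n$, the projection $\pi_j \colon X \to \P_j$ fails to be finite only along a locus of codimension $\geq 2$ in $X$. Since $\dim X = \dim \P_j = \sum_{i\neq j} n_i$ and $\pi_j$ is generically $2$-to-$1$ (by the previous proposition), $\pi_j$ is a generically finite surjective morphism between smooth projective varieties of the same dimension. The non-finite locus of such a morphism is exactly the union of the positive-dimensional fibers together with the locus where $\pi_j$ is not flat; equivalently, $\pi_j$ is small iff no divisor of $X$ is contracted by $\pi_j$. So the statement reduces to: no prime divisor $E \subset X$ is contracted to something of lower dimension in $\P_j$.

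First I would set up coordinates as in the text: write $D_i = \sum_k a_{ik} H_k$ with $a_{ik} \geq 1$, and recall that exactly one of the $a_{ij}$ (say $a_{1j}$) equals $2$ while the rest equal $1$. Fix a point $y = (x_1,\dots,\widehat{x_j},\dots,x_l) \in \P_j$ and consider the fiber $\pi_j^{-1}(y)$, which sits inside $\{y\}\times \P^{n_j} \cong \P^{n}$. Each $D_i$ restricts on this $\P^n$ to a hypersurface of degree $a_{ij}$; so the fiber is cut out in $\P^n$ by $m$ equations of degrees $(2,1,\dots,1)$ (one quadric and $m-1$ linear forms, with $m \leq n$). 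I would then argue that for a \emph{general} choice of the $D_i$, the only way the fiber can have dimension $\geq 1$ is if the $m-1$ hyperplanes and the quadric fail to be in general position over $y$, and this bad locus in $\P_j$ has codimension $\geq 2$; hence its preimage — together with any divisorial component swept out — still has codimension $\geq 2$ in $X$. The cleanest way to make this precise: the subvariety $\mathcal{Z} = \{(z,y) : \dim_z \pi_j^{-1}(y) \geq 1\}$ is the image of an incidence variety parametrizing a point $z$ together with a line through $z$ contained in the fiber over $y$; a dimension count for general $D_i$ bounds $\dim \mathcal{Z}$ and shows it has codimension $\geq 2$ in $X$, using that $\dim$ of the space of lines in $\P^n$ through a fixed point is $n-1$ and each of the $m$ equations imposes appropriate (independent, for general data) conditions on the line.

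Alternatively — and this is probably the route actually taken, following Skauli — one can avoid the explicit parameter count by using the structure of $\pi_j$ directly: the involution $\iota_j$ is defined by the quadric, exchanging the two points of a general fiber, and the ramification/branch analysis shows the non-isomorphism locus of $\iota_j$ (hence the non-finite locus of $\pi_j$) is contained in the locus where the fiber $\{$quadric$\} \cap \{m-1$ hyperplanes$\} \subset \P^n$ degenerates, which for general $D_i$ is where the quadric becomes singular along the linear subspace $\P^{n-m+1}$ cut by the hyperplanes, or the hyperplanes themselves degenerate — each a condition of codimension $\geq 2$ on $\P_j$. Combined with Theorem \ref{thmKFlops} (every $g \in \Bir(X)$ is an isomorphism in codimension $1$, so in particular the flops are small), this confirms smallness.

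The main obstacle I expect is making the "general position over $y$ fails only in codimension $\geq 2$" claim rigorous and uniform: one must check that the bad locus is not a divisor, which requires genuinely using the generality of the $D_i$ (not just Bertini-type smoothness of $X$) and a careful incidence-variety dimension estimate, handling the edge case $m = n$ where the fibers are already $0$-dimensional generically but the quadric could still contain a line over a sublocus. This is the step where the hypothesis $m \leq n$ and the precise multidegree bookkeeping ($\sum_i a_{ij} = n_j+1$) are essential.
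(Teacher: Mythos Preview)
Your outline identifies the correct setup --- the fiber over $\mathbf{y} \in \P_j$ is cut in $\P^n$ by $n-1$ linear forms and one quadric --- but the key dimension count is never carried out, and your second route contains a numerical slip that would make it fail: you claim the degeneration loci have ``codimension $\geq 2$ on $\P_j$'', yet if the locus $B$ of $1$-dimensional fibers had codimension only $2$, then $\pi_j^{-1}(B)$ would have dimension $\dim\P_j - 2 + 1 = \dim X - 1$ and would be a divisor. What is actually needed is $\codim\{\dim\pi_j^{-1}\geq k\} \geq k+2$ in $\P_j$ for each $k\geq 1$. (Two side remarks: the appeal to the flop-decomposition theorem is circular, since the fact that $\iota_j$ is a flop is deduced \emph{from} the smallness of $\pi_j$ later in the paper; and the standing hypothesis in this section is $m=\codim X=n$, so your references to ``$m\leq n$'' and ``the edge case $m=n$'' misread the setup.)

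The paper's proof is precisely the explicit stratified count your sketch omits. Stripping off the quadric, the $n-1$ linear equations define $\tau\colon V\to\P_j$ whose fiber over $\mathbf{y}_0$ is the projective kernel of the $(n-1)\times(n+1)$ matrix $[g_{i,k}(\mathbf{y}_0)]$; hence $\mathrm{Fib}_k=\{\dim\tau^{-1}\geq k\}$ is a determinantal locus of codimension $k^2-1$ for general coefficients. One then re-imposes the quadric: $\dim\pi_j^{-1}(\mathbf{y}_0)\geq 1$ forces either $\mathbf{y}_0\in\mathrm{Fib}_2$ (codimension $3$) or $\tau^{-1}(\mathbf{y}_0)\cong\P^1$ with the quadric vanishing on that line (again a codimension-$3$ condition for a general quadric); similarly a quadric containing a fixed plane is a codimension-$6$ condition, which handles $k=2$, and for $k\geq 3$ the bound $k^2-1$ already suffices. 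These two concrete inputs --- the determinantal codimension formula and the ``quadric contains a $\P^k$'' count --- are exactly what your incidence-variety approach would have to reproduce, and they are where the generality of the $D_i$ is genuinely used.
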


\begin{proof}
We recall the proof for the reader's convenience. Without loss of generality, assume $j=l$.

Let $X = D_1 \cap \cdots \cap D_n$, and assume for simplicity, that $a_{nl} = 2$, \emph{i.e.} that $D_n$ is the divisor that has degree 2 on $\P^{n_l}$. Let $x_0,\ldots, x_n$ the coordinates of $\P^{n_l}$.

If $n=1$ we get that $X = f_1(\mathbf{y})x_0^2 + f_2(\mathbf{y})x_0x_1 + f_3(\mathbf{y})x_1^2 =0$, where $\mathbf{y}$ are coordinates of $\P(\mathbf{n})_l$. For a fixed point $\mathbf{y}_0\in \P(\mathbf{n})_l$ we get that $\pi_l^{-1}(\mathbf{y}_0) \simeq \P^1$  if and only if $x\in \{f_1 = f_2=f_3 =0\}$.We can assume the $f_i$'s are general so the set $\{f_1 = f_2=f_3 =0\}$ has codimension $\geq 3$, implying that $\pi_l$ contracts no divisor.

We need then to prove the case $n \geq 2$. In this case, $X$ is defined by $n-1$ linear equations and one quadratic equation in the variables $x_0,\ldots,x_n$. More precisely, $X$ is determined by the equations
\begin{eqnarray*}
G_1(\mathbf{x},\mathbf{y}):= g_{1,0}(\mathbf{y})x_0 + g_{1,1}(\mathbf{y})x_1 + \cdots + g_{1,n}(\mathbf{y})x_n &=&0\\
G_2(\mathbf{x},\mathbf{y}):= g_{2,0}(\mathbf{y})x_0 + g_{2,1}(\mathbf{y})x_1 + \cdots + g_{2,n}(\mathbf{y})x_n &=&0\\
\vdots \\
G_{n-1}(\mathbf{x},\mathbf{y}):= g_{n-1,0}(\mathbf{y})x_0 + g_{n-1,1}(\mathbf{y})x_1 + \cdots + g_{n-1,n}(\mathbf{y})x_n &=&0\\
F(\mathbf{x},\mathbf{y}) := \sum_{0\leq i \leq j \leq n} f_{ij}(\mathbf{y})x_ix_j &=& 0
\end{eqnarray*} where $\mathbf{y}$ are coordinates of $\P(\mathbf{n})_l$.

Let $V \subset \P(\mathbf{n})$ be the subvariety defined by the $G_i(\mathbf{x},\mathbf{y})$'s, and consider \[ \tau\colon V \to \P(\mathbf{n})_l \] the corresponding projection. Let $\mathbf{y}_0$ be a point in $\P(\mathbf{n})_l$. The fiber $\tau^{-1}(\mathbf{y}_0) \subset \P^n$ is given by 
\begin{eqnarray*}
g_{1,0}(\mathbf{y}_0)x_0 + g_{1,1}(\mathbf{y}_0)x_1 + \cdots + g_{1,n}(\mathbf{y}_0)x_n &=&0\\
g_{2,0}(\mathbf{y}_0)x_0 + g_{2,1}(\mathbf{y}_0)x_1 + \cdots + g_{2,n}(\mathbf{y}_0)x_n &=&0\\
\vdots \\
g_{n-1,0}(\mathbf{y}_0)x_0 + g_{n-1,1}(\mathbf{y}_0)x_1 + \cdots + g_{n-1,n}(\mathbf{y}_0)x_n &=&0
\end{eqnarray*} so the dimension of the fiber $\tau^{-1}(\mathbf{y}_0)$ is determined by the rank of the $(n-1)\times (n+1)$ matrix $[g_{i,j}(\mathbf{y}_0)]$: the fiber $\tau^{-1}(\mathbf{y}_0)$ has dimension $k$ if and only if $\mathrm{rk}\, [g_{i,j}(\mathbf{y}_0)] = n-k$.

Define $\mathrm{Fib}_k = \{\mathbf{y}_0 \in \P(\mathbf{n})_l \mid \dim \tau^{-1}(\mathbf{y}_0) \geq k\}$. Since the $g_{i,j}$ are chosen generally, it follows that $\codim \mathrm{Fib}_k = k^2 - 1$ (see \cite[Prop. 12.2]{Harris92} and \cite[Lemma 8.3.4]{Skauli17})

Then the locus of points where $\pi_l^{-1}(\mathbf{y})$ has dimension $k$ has codimesion greater or equal to $k^2 - 1$. So to check that the map is small, we need to check the cases when $k = 1$ and $k = 2$.

Notice that $\pi_l^{-1}(x)$ is given by $\tau^{-1}(\mathbf{y}) \cap (F(\mathbf{x},\mathbf{y}) = 0)$. A quadric containing a line is a codimension 3 condition, and a quadric containing a plane is a codimension 6 condition. Since the $D_i$'s are chosen generally, these codimensions are achieved \cite[Lemma 8.3.5]{Skauli17}.

Therefore the locus of points where $\pi_l^{-1}(\mathbf{y})$ has dimension 1 has codimension at least 3, and the locus of points where $\pi_l^{-1}(\mathbf{y})$ has dimension 2 has codimension at least 6, which finishes the proof.
\end{proof}

We now check that the action of $\Aut(X)$ is trivial on $N^1(X)$. For this, we use the following lemma.

\begin{lemma}[\cite{Skauli17}, Lemma 8.4.7] For all $j$, $H^0(X,\O_X(h_j)) \simeq H^0(\P(\mathbf{n}),\O_{\P(\mathbf{n})}(H_j))$.
\label{lemmaIsoSections}
\end{lemma}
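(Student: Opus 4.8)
The strategy is to compare global sections through the Koszul resolution of the ideal sheaf of $X$ in $\P := \P(\mathbf{n})$. Since $X = D_1 \cap \dots \cap D_m$ is a complete intersection of divisors with $D_i \in |\sum_k a_{ik}H_k|$, the sheaf $\O_X$ has a Koszul resolution
\[
0 \to \O_\P\Big(-\sum_i D_i\Big) \to \cdots \to \bigoplus_{i<i'} \O_\P(-D_i - D_{i'}) \to \bigoplus_i \O_\P(-D_i) \to \O_\P \to \O_X \to 0.
\]
Twisting by $\O_\P(H_j)$ and taking cohomology, I would like to conclude that $H^0(X,\O_X(h_j))$ equals $H^0(\P,\O_\P(H_j))$, which will follow once I show that the higher cohomology of all the twisted terms $\O_\P(H_j - D_{i_1} - \dots - D_{i_r})$ appearing in the resolution vanishes in the relevant degrees (namely $H^q$ for $q = 1, \dots, r$ contributing to $H^0$ and $H^1$ of $\O_X(h_j)$).

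First I would split the Koszul complex into short exact sequences in the usual way, setting $K_r$ to be the $r$-th syzygy sheaf, so that $0 \to K_{r+1} \to \bigwedge^r(\bigoplus_i \O_\P(-D_i)) \to K_r \to 0$ with $K_0 = \O_X$. Twisting by $\O_\P(H_j)$ and chasing the long exact sequences reduces the claim to the vanishing of $H^q\big(\P, \O_\P(H_j - D_{i_1} - \dots - D_{i_r})\big)$ for all $1 \le r \le m$, all choices of indices, and $1 \le q \le r$. The key point is that on a product of projective spaces $\P^{n_1} \times \dots \times \P^{n_l}$, the cohomology of a line bundle $\O(c_1, \dots, c_l)$ is computed by the Künneth formula from the cohomology of each $\O_{\P^{n_k}}(c_k)$; in particular $H^q$ vanishes for $0 < q < \dim \P$ when each $c_k \ge 0$ or... more carefully, I would use that $H^q(\P^{n_k}, \O(c))$ is nonzero only for $q=0$ (when $c \ge 0$) or $q = n_k$ (when $c \le -n_k-1$). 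So the twisted line bundle $\O_\P(H_j - \sum_{t} D_{i_t})$ has multidegree with $k$-th entry $\delta_{jk} - \sum_t a_{i_t, k}$; since all $a_{ik} \ge 1$, for $r \ge 1$ this entry is $\le \delta_{jk} - r$. I would check that for $r$ in the range needed, either the entry is still $\ge 0$ (small $r$, giving $H^{\ge 1} = 0$ directly) or, once it dips negative, it is not yet as negative as $-n_k - 1$ unless $r$ is large enough that the corresponding Künneth term $H^{n_k}$ only contributes to $H^q(\P, -)$ with $q \ge n_k \ge n \ge m \ge r$, which does not interfere with $H^0$ and $H^1$ of $\O_X(h_j)$.

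The main obstacle I anticipate is bookkeeping the precise cohomological degrees: I need to ensure that whenever a Künneth factor $H^{n_k}(\P^{n_k}, \O(c_k))$ becomes nonzero (i.e. $c_k \le -n_k - 1$, equivalently $\sum_t a_{i_t,k} \ge n_k + 1 + \delta_{jk}$), the total contribution lands in cohomological degree $\ge r+1$ so that it neither kills sections nor creates an obstruction in $H^1$. Here the hypothesis $m \le n = \min_i n_i$ is doing the real work: it bounds the length of the Koszul complex below every $n_k$, so a single $H^{n_k}$-factor cannot appear before step $r = n_k \ge m$. I would organize this as a short case analysis on $r$ versus the $n_k$'s, invoking $\sum_i a_{ik} = n_k + 1$ (from $\sum D_i = -K_\P$) to control how negative the partial sums can get. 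Once the vanishing is in place, the edge map $H^0(\P,\O_\P(H_j)) \to H^0(X,\O_X(h_j))$ from the Koszul complex is an isomorphism, which is the claim.
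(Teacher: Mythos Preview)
The paper does not actually prove this lemma; it merely cites Skauli's thesis, so there is no in-paper argument to compare against. Your Koszul-resolution approach is the natural one and is correct in outline.

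The bookkeeping you flag as the main obstacle is cleaner than you fear, and the inequality chain you wrote does not quite suffice as stated. Your argument that a nonzero $H^{n_k}$-contribution forces $q \ge n_k \ge n \ge m \ge r$ only yields $q \ge r$, not $q \ge r+1$, so it does not by itself rule out a contribution to $H^r(E_r)$ at the last step $r=m$. The fix is to observe that for every $1 \le r \le m$ some K\"unneth factor is \emph{acyclic}, i.e.\ has $H^{q_k}=0$ for all $q_k$, which kills $H^q(E_r)$ in all degrees at once. Indeed, for $1 \le r \le m-1$ and any $k \ne j$ one has $c_k = -\sum_{i\in I} a_{ik}$ with
\[
1 \le r \le \sum_{i\in I} a_{ik} \le (n_k+1)-(m-r) \le n_k,
\]
using $a_{ik}\ge 1$ and $\sum_{i=1}^m a_{ik}=n_k+1$; thus $-n_k \le c_k \le -1$ and $H^{q_k}(\P^{n_k},\O(c_k))=0$ for every $q_k$. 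For $r=m$ the $j$-th factor does the job: $c_j = 1-(n_j+1)=-n_j$, again in the acyclic range $[-n_j,-1]$. Hence $H^q(E_r)=0$ for all $q$ and all $r\ge 1$, and the restriction $H^0(\P,\O_\P(H_j))\to H^0(X,\O_X(h_j))$ is an isomorphism. With this observation your sketch becomes a complete proof.
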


\begin{proposition}[Theorem \ref{mainThm} (a)]
For a general choice of $D_1,\ldots, D_{n_l}$ the co-action of $\Aut(X)$ on $N^1(X)_\R$ is trivial.
\label{propAutAction}
\end{proposition}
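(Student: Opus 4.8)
The strategy is to show that any automorphism $g \in \Aut(X)$ acts on $N^1(X)_\R = \bigoplus_j \R h_j$ by permuting a distinguished spanning set of effective classes, namely the $h_j$ themselves, and then to rule out nontrivial permutations by a degree/dimension bookkeeping argument using Lemma \ref{lemmaIsoSections}. First I would recall that $\Aut(X)$ acts on $N^1(X)$ preserving $\Nef(X)$, which is the simplicial cone generated by $h_1, \dots, h_l$; hence $g^*$ permutes the extremal rays $\R_{\geq 0} h_j$, so there is a permutation $\sigma \in S_l$ and positive scalars with $g^* h_j = h_{\sigma(j)}$ (the scalars are $1$ since $g^*$ is an automorphism of the lattice $N^1(X) = \bigoplus \Z h_j$ carrying primitive generators to primitive generators). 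So it suffices to show $\sigma = \mathrm{id}$.

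Next I would extract the numerical invariant that distinguishes the $h_j$. By Lemma \ref{lemmaIsoSections}, $h^0(X, \O_X(h_j)) = h^0(\P(\mathbf n), \O_{\P(\mathbf n)}(H_j)) = n_j + 1$. If $g^* h_j = h_{\sigma(j)}$, then pulling back sections gives $h^0(X,\O_X(h_j)) = h^0(X, \O_X(h_{\sigma(j)}))$, hence $n_j = n_{\sigma(j)}$ for all $j$. This already forces $\sigma$ to preserve the partition of $\{1,\dots,l\}$ by the value of $n_j$; in particular it need not yet be the identity when several $n_j$ coincide. To finish I would look at a finer invariant: for a general choice of the $D_i$, the multidegrees $a_{ik}$ of the defining divisors are fixed, and the linear system $|h_j|$ together with the embedding data distinguishes the factors. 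Concretely, I would use that the map $\pi_j \colon X \to \P(\mathbf n)_j$ (equivalently, the factor $\P^{n_j}$ together with the collection of hypersurface equations) is intrinsic: an automorphism $g$ with $g^* h_j = h_{\sigma(j)}$ induces a compatible automorphism of the ambient $\P(\mathbf n)$ (via Lemma \ref{lemmaIsoSections}, the multiplication maps $H^0(\O_X(h_j))^{\otimes d} \to H^0(\O_X(\sum d_j h_j))$ reconstruct the Cox ring in the relevant degrees), which must carry the ideal of $X$ to itself; for a general choice of the $D_i$ the only such ambient automorphisms are those respecting each factor, i.e. $\sigma = \mathrm{id}$. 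Equivalently, and more in the spirit of the cited source, one shows that a general choice of equations is not preserved by any ambient automorphism permuting equal-dimensional factors nontrivially, a standard parameter-count / genericity argument.

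The main obstacle is precisely this last step: ruling out nontrivial $\sigma$ that permute factors of equal dimension $n_j$. The inequality $n_j = n_{\sigma(j)}$ from $h^0$ is not enough by itself, so one genuinely needs the genericity of the $D_i$ — either by showing the induced ambient automorphism would have to preserve a general point of the parameter space of such complete intersections (which has no nontrivial stabilizer under the $\sigma$-twisted action), or by exhibiting a further numerical invariant of $h_j$ (such as the multidegrees $(a_{1j},\dots,a_{mj})$ read off from intersection numbers $h_j \cdot \prod h_k$-type computations on $X$, i.e. from the Hilbert polynomial of $\O_X(h_j)$) that is not symmetric under $\sigma$ for general choices. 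I would carry out the argument in the first form, reducing to a statement about automorphisms of $\P(\mathbf n)$ preserving a general member of a very ample linear system, and invoke genericity to conclude $g^*$ acts trivially on $N^1(X)_\R$.
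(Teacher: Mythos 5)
Your proposal follows essentially the same route as the paper: use the simpliciality of $\Nef(X)$ to see that $g^*$ permutes the primitive generators $h_j$, lift $g$ to an automorphism of $\P(\mathbf{n})$ via Lemma \ref{lemmaIsoSections}, and then invoke genericity of the defining divisors to rule out any nontrivial permutation of the factors, leaving only $\prod_i PGL_{n_i+1}(\C)$, which acts trivially on $N^1$. The paper likewise leaves the genericity step (no ambient automorphism permuting equal-dimensional factors preserves a general complete intersection) as an assertion, so your identification of that step as the crux is accurate and your plan is sound.
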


\begin{proof}
By Theorem \ref{thmCYFano} (3), $\Aut(X)$ is finite. Let $\gamma \in \Aut(X)$. From Theorem \ref{thmCYFano} (2), $\eta^*(\Nef(\P(\mathbf{n})) \simeq \Nef(X) = \mathrm{cone}(h_1,\ldots,h_l)$. Because $\{h_1,\ldots,h_l\}$ are primitive generators of the extremal rays of $\Nef(X)$, and the elements of $\Aut(X)$ preserve the nef cone, we have that $\gamma$ preserves the set $\{h_1,h_2,\ldots,h_l\}$, and by Lemma \ref{lemmaIsoSections}, the action of $\gamma$ on the sections of $H^0(X,\O_X(h_j))$, for $1\leq j \leq l$, induces an automorphism $\gamma_{\P(\mathbf{n})}$ on $\P(\mathbf{n})$ such that $\gamma_{\P(\mathbf{n})}|_X = \gamma$.

Hence $\Aut(X)$ is a subgroup of $\Aut(\P(\mathbf{n})) = \left( \prod_{i=1}^l PGL_{n_i+1}(\C)\right) \rtimes H$, where $H$ is a subgroup of $\mathcal{S}_l$, the group of $l$ permutations, that depends on the $n_i$'s.

Let $G$ be the image of \[ \Aut(X) \to \Aut(\P(\mathbf{n})) \to H. \] If $g\in G$, with $g\neq \mathrm{id}$, there exists a lift to an automorphism of $\P(\mathbf{n})$ that restricts to an automorphism of $X$. If $g\neq \mathrm{id}$, this means that $g^*$ permutes the variables between the factors $\P^{n_i}$. Because the polynomials corresponding to the $D_j$ are chosen generally, it follows that this situation cannot happen, and so $G = \{\mathrm{id}\}$.

This implies that $\Aut(X)$ is a finite subgroup of $\prod_{i=1}^l PGL_{n_i+1}(\C)$. The co-action of $\prod_{i=1}^l PGL_{n_i+1}(\C)$ on $N^1(\P(\mathbf{n}))_\R$, and so on $N^1(X)_\R$, is trivial.

%%%% Now let $g\in \Aut(X) \leq \prod_{i=1}^l PGL_{n_i}(\C)$, and write $x_{ij}$ for the coordinates of $\P^{n_i}$. Up to conjugacy in $\prod_{i=1}^l PGL_{n_i}(\C)$, 
\end{proof}

\begin{remark}
It seems that one could replicate the proof of Theorem \ref{thmCO15} (1) to prove that $\Aut(X)$ is trivial for a very general choice of $X$, but it is not clear how to extend the computations done in \cite{CantatOguiso15} to our more general setup.
\end{remark}

To compare $\Bir(X)$ with $W_J$ we need to understand the action of the involutions $\iota_j$, with $j\in J$, on $N^1(X)_\R$.

Write the divisors $D_i$ with respect to the basis $\{H_j\}$. \[ D_i = \sum_{j=1}^{l} a_{ij} H_j. \] Consider $X$ as a codimension $n$ cycle in $\P(\mathbf{n})$ \[ X = (D_1\cdot \ldots \cdot D_n)_{\P(\mathbf{n})} \] and define the numbers $b_{ij}$ as the coefficient of $H_iH_j^{n-1}$ in the cycle $X$.

Define a matrix $B$ as follows 
\[(B)_{ij} = \begin{cases} 	1 & \text{if }i=j \\ 
							-b_{ij}/2 & \text{if } i\neq j \text{ and } j\in J \\
							-b_{ji}/2 & \text{if } i\neq j \text{ and } i\in J \\
							-n_l & \text{if } i\neq j \text{ and } i\not \in J \text{ and } j\not\in J \end{cases}\]

Notice that if $j \in J$ we use the coefficient $b_{ij}$, and when $i \in J$ we use the coefficient $b_{ji}$. Then we need to check that if both $i$ and $j$ are in $J$, the coefficients $b_{ij}$ and $b_{ji}$ are equal.

\begin{proposition}
The matrix $B$ is well-defined.
\label{propMatrixDefined}
\end{proposition}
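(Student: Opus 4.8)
The content of the proposition is the symmetry $b_{ij} = b_{ji}$ when both $i,j\in J$ (when $i\notin J$ or $j\notin J$ the relevant entry of $B$ is given by one unambiguous rule). The plan is to compute $b_{ij}$ explicitly from the expansion of the cycle $X = (D_1\cdots D_n)_{\P(\mathbf{n})}$ and read off a closed formula that is visibly symmetric in $i$ and $j$. First I would record a combinatorial formula: writing $D_k = \sum_m a_{km}H_m$ and expanding the product, the monomial $H_iH_j^{n-1}$ — which is a genuine basis element of $H^{2n}(\P(\mathbf{n}))$, since $i\ne j$, $n_i\geq 1$ and $n_j = n \geq n-1$, so that "the coefficient of $H_iH_j^{n-1}$" is well defined — arises precisely by choosing one factor $D_k$ to contribute its $H_i$-term and the remaining $n-1$ factors to contribute their $H_j$-terms. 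Hence
\[ b_{ij} = \sum_{k=1}^{n} a_{ki}\prod_{k'\ne k} a_{k'j}. \]

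Next I would evaluate this using the structure of the coefficients established earlier: for $j\in J$ we have $n_j = n$ and $\sum_k a_{kj} = n+1$ with each $a_{kj}\geq 1$, so exactly one index, call it $q(j)$, satisfies $a_{q(j)\,j} = 2$ and $a_{kj} = 1$ for $k\ne q(j)$. Then $\prod_{k'\ne k} a_{k'j}$ equals $1$ if $k = q(j)$ and equals $2$ otherwise, so
\[ b_{ij} = a_{q(j)\,i} + 2\sum_{k\ne q(j)} a_{ki} = 2\sum_{k=1}^{n} a_{ki} - a_{q(j)\,i} = 2(n+1) - a_{q(j)\,i}, \]
the last step using $i\in J$, i.e. $\sum_k a_{ki} = n_i + 1 = n+1$. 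By the same computation $b_{ji} = 2(n+1) - a_{q(i)\,j}$, so the proposition reduces to the identity $a_{q(j)\,i} = a_{q(i)\,j}$. Both numbers lie in $\{1,2\}$, and $a_{q(j)\,i} = 2$ holds exactly when $q(j) = q(i)$ (as $q(i)$ is the unique index with a $2$ in column $i$); symmetrically $a_{q(i)\,j} = 2$ exactly when $q(i) = q(j)$. Thus the two entries are simultaneously equal to $2$ or simultaneously equal to $1$, hence equal, and $b_{ij} = b_{ji}$.

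I do not expect a real obstacle here: the statement is essentially bookkeeping. The only points needing a little care are (i) verifying that $H_iH_j^{n-1}$ is a legitimate monomial of $H^*(\P(\mathbf{n}))$, so the coefficient is unambiguous, and (ii) correctly tracking which of the divisors $D_k$ carries the coefficient $2$ in each column indexed by an element of $J$. The degenerate case $n=1$ is covered as well: there is a single divisor $D_1 = -K_{\P(\mathbf{n})}$, so $q(j) = 1$ for every $j$, and the formula gives $b_{ij} = n_i + 1 = n_j + 1 = b_{ji}$.
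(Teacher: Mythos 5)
Your proof is correct and follows essentially the same route as the paper: both arguments expand the cycle $X=(D_1\cdots D_n)$ to identify $b_{ij}$ as the coefficient $\sum_k a_{ki}\prod_{k'\neq k}a_{k'j}$ and then exploit the fact that for a column indexed by $J$ exactly one $a_{kj}$ equals $2$ and the rest equal $1$. The only cosmetic difference is that the paper phrases the symmetry as invariance of a two-variable polynomial ($2(x+y)^n$ versus $(2x+y)(x+2y)(x+y)^{n-2}$, according to whether the two ``$2$''s sit in the same divisor or not) under $x\leftrightarrow y$, whereas you derive the closed form $b_{ij}=2(n+1)-a_{q(j)\,i}$ and check $a_{q(j)\,i}=a_{q(i)\,j}$ directly; your version is, if anything, slightly more explicit, and your sanity checks (well-definedness of the monomial, the $n=1$ case) are welcome.
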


\begin{proof}
We need to prove that $b_{ij} = b_{ji}$ when both $i$ and $j$ are in $J$. To do so, remember that the $a_{kj}$'s and the $a_{ki}$'s are all $1$, except for one case where it is equal to $2$. Then $b_{ij}$ corresponds to the coefficient of $xy^{n-1}$ in the expansion of $2(x+y)^{n}$ or $(2x^2+4xy+2y^2)(x+y)^{n-2}$. By symmetry, it is equal to the coefficient of $yx^{n-1}$, which corresponds to $b_{ji}$.
\end{proof}

\begin{proposition}
Let $j\in J$ and let $\iota_j$ be the corresponding birational involution. Then, with respect to the basis $\{h_i\}_{i=1}^l$ the matrix of $\iota_j^*$ is $M(l)_j^T$, with $M(l)_j$ be the matrix described at Section \ref{subsection:CG}.
\label{propMatrices}
\end{proposition}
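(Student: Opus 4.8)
Here is how I would approach the proof.

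First, note that since the $\pi_j$ are small, each $\iota_j$ restricts to a genuine automorphism of the open set $X\setminus\pi_j^{-1}(Z)$, where $Z\subset\P(\mathbf{n})_j$ is the codimension $\geq 3$ locus over which $\pi_j$ fails to be finite; in particular $\iota_j$ is an isomorphism in codimension one, $\iota_j^*$ acts on $N^1(X)_\R=\bigoplus_i\R h_i$, and it suffices to determine $\iota_j^*(h_i)$ for all $i$. Fix $j\in J$; after relabelling assume $j=l$ and (as in the preceding proofs) that $D_n$ is the member of $D_1,\dots,D_n$ with $a_{nl}=2$, so $a_{il}=1$ for $i<n$. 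For $k\neq j$ the class $H_k$ is pulled back from $\P(\mathbf{n})_j$ along $p_j$, so $h_k=\eta^*H_k=\pi_j^*\bar H_k$ for the corresponding class $\bar H_k$ on $\P(\mathbf{n})_j$; since $\pi_j\circ\iota_j=\pi_j$ as rational maps, pulling back $\bar H_k$ to a common resolution and using that pull-back along a birational morphism is injective on N\'eron--Severi groups gives $\iota_j^*h_k=h_k$.

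The main point is the computation of $\iota_j^*h_j$. Take the hyperplane section $S:=\{x_{n_j}=0\}\cap X\in|h_j|$, where $x_0,\dots,x_{n_j}$ are coordinates on $\P^{n_j}$. For general $\mathbf{y}\in\P(\mathbf{n})_j$ the fibre $\pi_j^{-1}(\mathbf{y})$ is the pair of points in which the quadric $D_n$ meets the line $\Lambda_{\mathbf{y}}\subset\P^{n_j}$ cut out by the $n-1$ linear equations $D_1,\dots,D_{n-1}$, and $q(\mathbf{y}):=\Lambda_{\mathbf{y}}\cap\{x_{n_j}=0\}$ is the unique point of $\Lambda_{\mathbf{y}}$ on that hyperplane. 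Hence $W:=\pi_j(S)=\{\mathbf{y}: q(\mathbf{y})\in D_n\}$ is a hypersurface in $\P(\mathbf{n})_j$, the restriction $\pi_j|_S\colon S\to W$ is birational, and $\pi_j^*W=S+\iota_j(S)$ as divisors -- over a general point of $W$ the cover $\pi_j$ is unramified with the two sheets $S$ and $\iota_j(S)$, and the generality of the $D_i$ ensures $W$ is reduced, not contained in the branch divisor, and meets $Z$ properly, so no further component appears. Passing to classes and using $[\iota_j(S)]=\iota_j^*[S]$ (valid because $\iota_j$ is an isomorphism in codimension one) yields
\[ \iota_j^*h_j\;=\;\pi_j^*[W]-h_j. \]
In particular the $h_j$-coefficient of $\iota_j^*h_j$ is $-1$.

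It remains to compute $[W]$ and to match the result with $M(l)_j$. Writing $q(\mathbf{y})=[Q_0(\mathbf{y}):\cdots:Q_{n_j-1}(\mathbf{y}):0]$ by Cramer's rule, each $Q_t$ is, up to sign, an $(n-1)\times(n-1)$ minor of the coefficient matrix of the linear forms $D_1,\dots,D_{n-1}$, hence has multidegree $\sum_{i=1}^{n-1}(a_{ik})_{k\neq j}$; substituting $q(\mathbf{y})$ into $D_n$, whose coefficients have multidegree $(a_{nk})_{k\neq j}$ since $a_{nj}=2$, shows $W$ has multidegree $w_k=a_{nk}+2\sum_{i=1}^{n-1}a_{ik}$ in the $k$-th factor, for $k\neq j$. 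On the other hand, expanding $[X]=\prod_{i=1}^{n}\bigl(\sum_{m}a_{im}H_m\bigr)$, the coefficient of $H_kH_j^{n-1}$ is $\sum_{i=1}^{n}a_{ik}\prod_{i'\neq i}a_{i'j}=a_{nk}+2\sum_{i=1}^{n-1}a_{ik}$ because $a_{i'j}=1$ for $i'<n$ and $a_{nj}=2$; that is, $w_k=b_{kj}$. Hence $\iota_j^*h_j=-h_j+\sum_{k\neq j}b_{kj}h_k$, and since $2c_{jk}=b_{kj}$ by the definition of $B$, the matrix of $\iota_j^*$ in the basis $\{h_i\}$ is the identity outside its $j$-th column, whose entries are $(b_{1j},\dots,b_{j-1,j},-1,b_{j+1,j},\dots,b_{lj})$; this is exactly $M(l)_j^T$.

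The step I expect to be the main obstacle is establishing $\pi_j^*W=S+\iota_j(S)$ with the correct multiplicities and pinning down the multidegree of $W$; both amount to reusing the transversality and generality estimates from the smallness proposition to control the dimension of the spans $\Lambda_{\mathbf{y}}$, the reducedness of $W$, and the degree of $\pi_j|_S$. As consistency checks, for $n=1$ the formula reads $\iota_j^*h_j=-h_j+\sum_{k\neq j}(n_k+1)h_k$, and for the Wehler case $\P=(\P^1)^{n+1}$ it specialises to the Cantat--Oguiso identity $\iota_j^*h_j=-h_j+2\sum_{k\neq j}h_k$.
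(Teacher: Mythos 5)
Your proof is correct and rests on the same core identity as the paper's, namely $\pi_j^*(\pi_j)_*h_j = h_j + \iota_j^*h_j$ for the generically $2$-to-$1$ small map $\pi_j$ (in your formulation, $\pi_j^*[W]=[S]+[\iota_j(S)]$ for $S\in|h_j|$ with image $W$). Where you diverge is in computing the class $(\pi_j)_*h_j=[W]$: the paper does this in two lines with the projection formula, writing $(\pi_j)_*h_j=\sum_{i\neq j}a_iL_i$ and evaluating $a_i=\bigl(h_j\cdot h_i^{n_i-1}\cdot\prod_{k\notin\{i,j\}}h_k^{n_k}\bigr)_X=\bigl(H_j\cdot H_i^{n_i-1}\cdot\prod_{k\notin\{i,j\}}H_k^{n_k}\cdot D_1\cdots D_n\bigr)_{\P(\mathbf{n})}=b_{ij}$, so the coefficients $b_{ij}$ appear by definition and no input is needed beyond $\deg\pi_j=2$. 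You instead eliminate the fibre variables with Cramer's rule to find the multidegree of $W$ explicitly and then check separately that it agrees with the coefficient of $H_kH_j^{n-1}$ in $[X]$; this is more concrete (it even produces the defining equation of $W$), but it forces you to justify that $W$ is reduced, that $\pi_j|_S$ is birational onto $W$, and that no divisorial component of $\pi_j^{-1}(W)$ lies over the non-finite locus --- exactly the points you flag as the main obstacle, and all of which the projection-formula route sidesteps. Both arguments correctly yield $\iota_j^*h_j=-h_j+\sum_{k\neq j}b_{kj}h_k$, and your identification $2c_{jk}=b_{kj}$ matching the $j$-th column of $M(l)_j^T$ agrees with the paper.
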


\begin{proof}
The proof here generalizes the proof from \cite[Lemma 6.2]{Oguiso14} and \cite[Lemma 2.1]{Silverman91}. Let $j\in J$. By definition we have $\iota_j^*(h_i) = h_i$ for $i\neq j$, so we need to calculate $\iota_j^*(h_j)$.

Let $L_i$ be the divisor class on $\P(\mathbf{n})_j$ corresponding to the pullback of $\O_{\P^{n_i}}(1)$. Then $(\pi_j)_*(h_j) = \sum_{i\neq j} a_iL_i$. Using the projection formula we obtain the following equality: \begin{align*}
a_i &= \left((\pi_j)_*(h_j).L_i^{n_i-1}.\prod_{k\not \in \{i,j\}} L_k^{n_k}\right)_{\P(\mathbf{n})_j}\\
	&= \left( h_j.h_i^{n_i-1}.\prod_{k\not \in \{i,j\}} h_k^{n_k} \right)_X\\
	&= \left( H_j.H_i^{n_i-1}.\prod_{k\not \in \{i,j\}} H_k^{n_k}.(D_1.D_2.\cdots.D_n) \right)_{\P(\mathbf{n})}\\
	&= b_{ij}
\end{align*}

Now we can use that $\pi_j^*(\pi_j)_*(h_j) = h_j + \iota_j^*(h_j)$ to obtain that \[ \iota_j^*(h_j) = -h_j + \sum_{i\neq j} b_{ij} h_i. \]

Hence the matrix associated to $\iota_j^*$ is \[ \iota_j^* =  \begin{pmatrix}
1 & 0 & \cdots & 0 & b_{1\,j} & 0 & \cdots & 0 \\ 
0 & 1 & \cdots & 0 & b_{2\,j} & 0 & \cdots & 0 \\ 
\vdots & \vdots & \ddots & \vdots & \vdots & \vdots & \cdots & \vdots \\ 
0 & 0 & \cdots & 1 & b_{j-1\,j} & 0 & \cdots & 0 \\ 
0 & 0 & \cdots & 0 & -1 & 0 & \cdots & 0 \\ 
0 & 0 & \cdots & 0 & b_{j+1\,j} & 1 & \cdots & 0 \\ 
\vdots & \vdots & \ddots & \vdots & \vdots & \vdots & \ddots & \vdots \\ 
0 & 0 & \cdots & 0 & b_{l\,j} & 0 & \cdots & 1
\end{pmatrix}\] which is the transpose of $M(l)_j$.
\end{proof}

\begin{example}
Let $X$ be given by the intersection of general divisors of multidegree $(2,2,1)$, $(2,1,1)$ and $(1,1,2)$ in $\P^4\times \P^3 \times \P^3$. Here $n = 3$, and so $J = \{2,3\}$. We can write $X$ as a cycle in $\P(\mathbf{n})$.
\begin{align*}
X 	&= (2H_1 + 2H_2 + H_3) . (2H_1 + H_2 + H_3) . (H_1 + H_2 + 2H_3)\\
	&= 4H_1^3 + 2H_2^3 + 2H_3^3 + 10H_1^2H_2 + 12H_1^2H_3 + 19H_1H_2H_3 \\ 
	&\phantom{=} + 8H_1H_2^2 + 9H_1H_3^3 + 7H_2^2H_3 + 7H_2H_3^2
\end{align*}
To compute $B$ we need the coefficients $b_{12} = 8$, $b_{13} = 9$ and $b_{23} = b_{32} = 7$. The matrix $B$ is then \[ B = \begin{pmatrix}
1 & -8/2 & -9/2 \\ -8/2 & 1 & -7/2 \\ -9/2 & -7/2 & 1
\end{pmatrix} \]

To compute $\iota_2^*(h_2)$ we follow the proof of Proposition \ref{propMatrices}. Let $(\pi_2)_*(h_2) = a_1L_1 + a_3L_3$. We use intersection numbers to compute $a_3$. \begin{align*}
a_1 &= \left((\pi_2)_*(h_2) . L_1^3.L_3^3\right)_{\P^4\times \widehat{\P^3} \times \P^3}\\
	&= \left(H_2.H_1^3.H_3^3.X\right)_{\P^4 \times \P^3 \times \P^3}\\
	&= 8 = b_{12}
\end{align*}
Similarly, we get $a_3 = 7 =b_{32}$. We can repeat this to compute $\iota_3^*$, and so we obtain \[ \iota_2^* = \begin{pmatrix}
1 & 8 & 0 \\ 0 & -1 & 0 \\ 0 & 7 & 1
\end{pmatrix} \qquad ; \qquad \iota_3^* = \begin{pmatrix}
1 & 0 & 9 \\ 0 & 1 & 7 \\ 0 & 0 & -1
\end{pmatrix} \]

Notice that following Section \ref{subsection:CG} we get that the matrices associated to $\rho(s_2)$ and $\rho(s_3)$ are \[ M(3)_2 = \begin{pmatrix}
1 & 0 & 0 \\ 8 & -1 & 7 \\ 0 & 0 & 1
\end{pmatrix} \qquad \text{ and } \qquad M(3)_3 = \begin{pmatrix}
1 & 0 & 0 \\ 0 & 1 & 0 \\ 9 & 7 & -1
\end{pmatrix} \] respectively, which correspond to the transpose of $\iota_2^*$ and $\iota_3^*$.
\label{example1}
\end{example}
~\\
We will now prove that every element of $\Bir(X)$ is a composition of the involutions $\iota_j$, up to an automorphism.

\begin{proposition}[cf. \cite{CantatOguiso15}, Theorem 3.3(4)]
$\Bir(X) \simeq \Aut(X)\cdot \langle \iota_j\mid j\in J\rangle$.
\label{propBirIsoInvo}
\end{proposition}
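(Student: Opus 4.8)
The plan is to use the fact that, by Theorem~\ref{thmKFlops}, every $g \in \Bir(X)$ decomposes as $g = f \circ \gamma_r \circ \cdots \circ \gamma_1$ with $f \in \Aut(X)$ and each $\gamma_i$ a flop, and then to track what each flop does to $\Nef(X)$. The key observation is that flops correspond to codimension-one faces of the nef cone: by Theorem~\ref{thmKContraction}, every flopping contraction of $X$ is given, up to $\Aut(X)$, by a codimension-one face of $\Nef(X) = \mathrm{cone}(h_1,\ldots,h_l)$, which is simplicial. Such a face is $F_i := \{\sum_{k \neq i} a_k h_k \mid a_k \geq 0\}$ for some $i$. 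So the first step is to determine for which $i$ the face $F_i$ actually supports a flopping contraction, and to identify the resulting flop with $\iota_i$.

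The second step is to show that a codimension-one face $F_i$ gives a (nontrivial) flopping contraction exactly when $i \in J$, and that in that case the induced flop $X \dashrightarrow X_i$ is, up to automorphism, an isomorphism with $X$ realizing the involution $\iota_i$. For $i \in J$ this is essentially the geometry already set up: $\pi_i \colon X \to \P_i$ is small and generically $2$-to-$1$ (Propositions before this statement), so $\pi_i$ contracts the locus where the fibres are positive-dimensional; the associated birational involution $\iota_i$ (the Galois involution of the degree-$2$ map) is precisely the flop across $F_i = \iota_i^*(\text{opposite face})$, since $\iota_i^* h_k = h_k$ for $k \neq i$ fixes the face $F_i$ pointwise and sends $h_i$ outside $\Nef(X)$ by Proposition~\ref{propMatrices}. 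For $i \notin J$ one must argue that $F_i$ does \emph{not} give a flop, i.e.\ the face lies on the boundary of the movable/effective cone in a way that admits no flopping contraction — concretely, that $\pi_i$ is either not generically finite or contracts a divisor, so there is no small contraction to flop. This uses $n_i > n = \codim X$, so $\dim \P_i > \dim X$ and $\pi_i$ cannot be generically finite; the linear system corresponding to the face is then not ``flopping'' but gives a genuine fibration-type or divisorial behaviour, and one checks no nontrivial birational self-map arises from it.

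The third step assembles the induction. Given $g = f \circ \gamma_r \circ \cdots \circ \gamma_1$, the first flop $\gamma_1$ is across some codimension-one face of $\Nef(X)$, hence (after composing with an automorphism, using Proposition~\ref{propAutAction} that $\Aut(X)$ acts trivially on $N^1(X)_\R$, so this does not disturb the cone) equals $\iota_{j_1}$ for some $j_1 \in J$. Then $\iota_{j_1}^{-1} \circ g$ again lies in $\Bir(X)$ and decomposes into one fewer flop, at which point one repeats. The chain of nef cones $\Nef(X), \iota_{j_1}^* \Nef(X), \ldots$ are the chambers of the $W_J$-action on the Tits cone (this is exactly the picture in Theorem~\ref{mainThm}(c) / Section~\ref{section:MovableCone}), and since each step moves to an adjacent chamber across a wall indexed by $J$, after finitely many steps we return to $\Nef(X)$ and conclude $g \in \Aut(X) \cdot \langle \iota_j \mid j \in J\rangle$. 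Conversely each $\iota_j \in \Bir(X)$ and $\Aut(X) \subseteq \Bir(X)$, giving the reverse inclusion, and the ``$\simeq$'' as a group follows once one knows $\langle \iota_j \rangle \cong W_J$ (proved immediately after, via the faithful geometric representation) and that $\Aut(X)$ meets this free product trivially because it acts trivially on $N^1(X)_\R$ while no nontrivial element of $W_J$ does.

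The main obstacle I expect is the second step: cleanly ruling out that a face $F_i$ with $i \notin J$ produces a flop. One has to be careful that ``$\pi_i$ is not generically finite'' genuinely precludes a small $K$-trivial contraction in the Mori-theoretic sense giving a flop — a priori a different linear system supported on the same face could behave differently — so the argument must go through Theorem~\ref{thmKContraction} to know \emph{every} flopping contraction is accounted for by a face, and then a dimension count plus generality of the $D_i$ (as in the ``smallness'' proposition) shows the face $F_i$, $i \notin J$, lies on $\partial \overline{\Eff}(X)$ or otherwise supports no flop. The bookkeeping that composing with trivially-acting automorphisms preserves the chamber structure is routine but needs to be stated.
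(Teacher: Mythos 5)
Your proposal is correct and follows essentially the same route as the paper: both arguments combine Theorem~\ref{thmKContraction} (flopping contractions correspond to codimension-one faces of the simplicial cone $\Nef(X)$) with Theorem~\ref{thmKFlops} (every birational automorphism is an automorphism composed with flops), identify the flop over the face $\{\sum_{k\neq j}a_kh_k\}$ with $\iota_j$ for $j\in J$ (the paper does this a bit more carefully via the Stein factorization $X\to\overline{X}_j\to\P(\mathbf{n})_j$ and the computation that $\iota_j^*(h_j)$ is anti-ample for the opposite contraction), and discard the faces with $i\notin J$ because $\pi_i$ is then of fiber type. One small correction in that last step: for $i\notin J$ one has $n_i>n=\codim X$, hence $\dim \P(\mathbf{n})_i=\sum_{k\neq i}n_k<\dim X$ (not $>$), so $\pi_i$ has positive-dimensional fibers and the associated contraction is of fiber type rather than small birational --- the conclusion you draw is right, but the stated inequality is reversed.
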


\begin{proof}
For $j\in J$, let $X\to[\overline{\pi}_j] \overline{X_j} \to[q_j] \P(\mathbf{n})_j$ be the Stein factorization of $\pi_j$. Then $\overline{\pi}_j$ is the small contraction corresponding to the codimension $1$ face given by $\sum_{k\neq j}\R_{\geq 0} h_k$ of $\Nef(X)$. Thus, $\rho(X/\overline{X}_j) = 1$ with relative ample generator $h_j$, and so $\overline{\pi}_j$ is a flopping contraction. We can describe the flop of $\overline{\pi}_j$ explicitly.

The birational involution $\iota_j$ induces an automorphism $\overline{\iota}_j$ of $\overline{X}_j$, satisfying $\overline{\pi}_j\circ \iota_j = \overline{\iota}_j\circ \overline{\pi}_j$. Define \[ \overline{\pi}_j^+ = (\overline{\iota}_j)^{-1}\circ \overline{\pi}_j\colon X\to \overline{X}_j. \] Then $\iota_j\circ \overline{\pi}^+_j = \overline{\pi}_j$. In Proposition \ref{propMatrices} we computed that \[ \iota_j^*(h_j) = -h_j + \sum_{i\neq j} b_{ij}h_i, \] so $\iota_j^*(h_j)$ is $\overline{\pi}_j^+$-anti-ample. Hence $\overline{\pi}_j^+$ is the flop of $\overline{\pi}_j$, and $(\overline{\pi}_j^+)^{-1}\circ \overline{\pi}_j = \iota_j$.

From Theorem \ref{thmKContraction}, we have that every flopping contraction is given by a codimension 1 face of $\Nef(X)$, up to an automorphism of $X$, and since there is no codimension 1 face of $\Nef(X)$ aside from $\sum_{k\neq j} \R_{\geq 0} h_k$, $1\leq j\leq s$, it follows that the only flops are $\iota_j$, with $j\in J$. On the other hand, Theorem \ref{thmKFlops} says that the every element of $\Bir(X)$ can be decomposed as a finite sequence of flops, modulo an automorphism of $X$. Combining this two results we obtain that $\Bir(X) \simeq \Aut(X)\cdot \langle \iota_j\mid j\in J\rangle$.
\end{proof}

\begin{corollary}[Theorem \ref{mainThm} (b)]
Let $(W,S)$ be the Coxeter system with $S = \{s_i\}_{i=1}^l$ and $m_{ij} = \infty$ when $i\neq j$. Then $\langle \iota_j\mid j\in J \rangle \simeq W_J \simeq \underbrace{\Z/2\Z \ast \cdots \ast \Z/2Z}_{|J|}$.
\end{corollary}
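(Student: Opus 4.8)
The plan is to realize $\langle\iota_j\mid j\in J\rangle$ as an isomorphic copy of the standard parabolic subgroup $W_J$ by comparing their linear actions on $N^1(X)_\R$. First I would record the abstract structure of $W_J$: since $m_{ij}=\infty$ for all $i\neq j$, the only defining relations of $W$ are $s_i^2=1$, so $W\simeq\underbrace{\Z/2\Z\ast\cdots\ast\Z/2\Z}_{l}$, and the standard parabolic $W_J$ generated by $\{s_j\mid j\in J\}$ is the free product of the corresponding $|J|$ factors, hence $W_J\simeq\underbrace{\Z/2\Z\ast\cdots\ast\Z/2\Z}_{|J|}$. This reduces the statement to the isomorphism $\langle\iota_j\mid j\in J\rangle\simeq W_J$.

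Since each $\iota_j$ is an involution, the universal property of the free product yields a surjective homomorphism $q\colon W_J\twoheadrightarrow\langle\iota_j\mid j\in J\rangle$ with $q(s_j)=\iota_j$, and the whole content is the injectivity of $q$. To prove it I would take $w=s_{j_1}\cdots s_{j_r}\in\ker q$, so $\iota_{j_1}\circ\cdots\circ\iota_{j_r}=\mathrm{id}_X$; applying pull-back, which reverses the order of composition, gives $\iota_{j_r}^*\circ\cdots\circ\iota_{j_1}^*=\mathrm{id}$ on $N^1(X)_\R$. By Proposition \ref{propMatrices} the matrix of $\iota_{j_k}^*$ in the basis $\{h_i\}$ is $M(l)_{j_k}^T$, so $M(l)_{j_r}^T\cdots M(l)_{j_1}^T=I$, and transposing, $M(l)_{j_1}\cdots M(l)_{j_r}=I$. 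But $M(l)_j$ is exactly the matrix of the element $\rho(s_j)$ of the geometric representation (Section \ref{subsection:CG}) in the basis $\{\alpha_i\}$, so this reads $\rho(w)=\mathrm{id}_V$; since the geometric representation is faithful, $w=1$. Hence $\ker q$ is trivial and $q$ is an isomorphism, which together with the first paragraph proves the Corollary.

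The only real content is the injectivity of $q$, and within it the point that needs care is the contravariance of pull-back: one must pass to $g\mapsto(g^*)^{-1}$ (equivalently, reverse the word) to convert the action on $N^1(X)_\R$ into an honest representation, and match this with the corresponding transpose — which is precisely why Proposition \ref{propMatrices} is stated with $M(l)_j^T$ rather than $M(l)_j$. An essentially equivalent alternative would be to identify $N^1(X)_\R$ with $V^*$ via $h_i\mapsto\alpha_i^*$, note that under this identification $\iota_j^*$ becomes the contragredient $\rho^*(s_j)$, deduce $\langle\iota_j^*\mid j\in J\rangle\simeq\rho^*(W_J)\simeq W_J$ from faithfulness of $\rho^*$, and transport back to $\langle\iota_j\mid j\in J\rangle$ along $g\mapsto(g^*)^{-1}$.
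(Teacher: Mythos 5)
Your proof is correct and follows essentially the same route as the paper, which disposes of the corollary in one line by invoking the faithfulness of the geometric representation associated to $B$; you simply make explicit the surjection $W_J\twoheadrightarrow\langle\iota_j\mid j\in J\rangle$ from the universal property of the free product and the transpose/order-reversal bookkeeping that matches $\iota_j^*$ (Proposition \ref{propMatrices}) with $\rho(s_j)$. The extra care you take with the contravariance of pull-back is exactly the content implicit in the paper's appeal to faithfulness, so nothing is missing.
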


\begin{proof}
This follows from the fact that the geometric representation of $W$ obtained with the matrix $B$ is faithful.
\end{proof}

\begin{remark}
Theorem \ref{thmKContraction} implies that if $\codim X < n$ then $X$ has no flopping contractions, and so Theorem \ref{thmKFlops} implies that $\Bir(X) = \Aut(X)$. On the other hand, Proposition \ref{propAutAction} remains true in this case, so $\Aut(X)$ is finite and acts trivially on $N^1(X)$.

The only case remaining is when $\codim X = n + 1$. This case has been partially studied (\cite[Section 3]{HosonoTakagi18}, \cite{LaiWang21}) and it seems that it has a very rich group of birational automorphisms.
\label{remarkCodimensionBir}
\end{remark}

\section{Movable cone of \texorpdfstring{$X$}{X}} \label{section:MovableCone}

Throughout this section $X$ denotes a codimension $n$, general complete intersection Calabi-Yau manifold in $\P(\mathbf{n})$ that satisfies Theorem \ref{mainThm} (1) and (2).

Let $(W,S)_B$ the Coxeter system associated to $X$ described for Proposition \ref{propMatrixDefined}. We want to give a description of ${\Mov\,}^e(X)$ using the Tits cone obtained from the geometric representation of $W$ associated to the bilinear form $B$.

To study the Tits cone we can identify the dual $V^*$ to $V$ using the bilinear form $B$. Let $\mathcal{B}(v,w) = v^TBw$ be the quadratic form defined by $B$. Then the fundamental domain $D$ can be identified with the vectors $w$ such that $\mathcal{B}(w,\alpha_i)\geq 0$ for all $i$. Define the vectors $c_i$ as \[ \mathcal{B}(c_i,\alpha_i) = 1 \] and \[ \mathcal{B}(c_i,\alpha_j) = 0\, , \, j\neq i. \] This identification also identifies the action $\rho^*$ with $\rho$.

Define $T_J$ as the orbit of $D$ under the action of $W_J$. More precisely, \[ T_J := \bigcup_{w\in W_J} \rho(w)(D). \] Notice that if $k\not \in J$, then $\rho(s_j).c_k = c_k$ for all $j\in J$.

\begin{theorem}
Let $\Phi\colon V \to N^1(X)_\R$ be the linear map such that $\Phi(c_i) = h_i$ for all $1\leq i \leq l$, and let $\Psi\colon W_J \to \Bir(X)$ given by $s_j\mapsto \iota_j$ for $j\in J$. Then
\begin{enumerate}
\item $\Phi \circ \rho(w) = \Psi(w)^*\circ \Phi$.
\item The fundamental domain $D$ of $W$ is mapped onto the the nef cone $\Nef(X)$ by $\Phi$.
\item The subcone $T_J \subset T$ is mapped onto the the movable effective cone ${\Mov\,}^e(X)$ by $\Phi$.
\end{enumerate}
\label{thmMapping}
\end{theorem}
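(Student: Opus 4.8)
The statement is essentially a dictionary between two group actions: the linear-algebraic Coxeter side $(V, \rho, W_J, D, T_J)$ and the geometric side $(N^1(X)_\R, \Psi(\cdot)^*, \Bir(X), \Nef(X), \Mov^e(X))$. Part (1) is the compatibility of the two actions; parts (2) and (3) then follow by transporting the known fundamental-domain statements (the Tits cone description from Vinberg, and the Kawamata--Morrison decomposition of the movable cone into nef chambers) across the isomorphism. The whole proof rests on part (1), so I would do that first and carefully.

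\emph{Part (1).} It suffices to check $\Phi\circ\rho(s_j) = \iota_j^*\circ\Phi$ for each generator $s_j$, $j\in J$, since the $s_j$ generate $W_J$ and $\Psi$ is a homomorphism. Both sides are linear maps $V\to N^1(X)_\R$, so it is enough to check equality on the basis $\{c_1,\dots,c_l\}$ of $V$. For $k\neq j$ one has $\rho(s_j)(c_k) = c_k$ (the reflection $\sigma_{\alpha_j}$ fixes $c_k$ because $\mathcal{B}(c_k,\alpha_j)=0$), and $\Phi(c_k)=h_k$, while $\iota_j^*(h_k)=h_k$ by definition of $\iota_j$; so both sides give $h_k$. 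For $k=j$: by the matrix formula for $\sigma_{\alpha_j}$ in the $\{c_i\}$-coordinates — which is exactly $M(l)_j$ transported via the identification of $V^*$ with $V$ — we get $\rho(s_j)(c_j) = -c_j + \sum_{i\neq j} 2\,\mathcal{B}(c_j,\alpha_j)^{-1}\cdots$; more concretely, since $\Phi(c_i)=h_i$ is set up so that $\Phi$ intertwines the matrix $M(l)_j$ (acting on the $c$-basis) with $\iota_j^*$ (whose matrix in the $h$-basis is $M(l)_j^T$ by Proposition \ref{propMatrices}), one reads off $\Phi(\rho(s_j)(c_j)) = -h_j + \sum_{i\neq j} b_{ij}h_i = \iota_j^*(h_j)$. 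The one genuine computation here is matching the off-diagonal entries: the reflection $\sigma_{\alpha_j}(c_j) = c_j - 2\frac{\mathcal{B}(c_j,c_j)}{\mathcal{B}(\alpha_j,\alpha_j)}\alpha_j$ must, after writing $\alpha_j$ back in terms of the $c_i$, produce the coefficients $b_{ij}$; this is where the specific choice $(B)_{ij}=-b_{ij}/2$ for $j\in J$ is used, together with $\mathcal{B}(\alpha_j,\alpha_j)=1$. I would present this as a short lemma-level computation and then invoke that $\langle\iota_j\mid j\in J\rangle\simeq W_J$ (the Corollary after Proposition \ref{propBirIsoInvo}) to conclude $\Psi$ is a well-defined isomorphism making (1) hold for all $w$.

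\emph{Parts (2) and (3).} Part (2): by construction $\Phi$ sends $c_i\mapsto h_i$, and $\Nef(X)=\mathrm{cone}(h_1,\dots,h_l)$ is simplicial (the Proposition in Section \ref{subsection:CYCI}), while $D$, identified inside $V$ via $\mathcal{B}$, is exactly $\mathrm{cone}(c_1,\dots,c_l)$ (the $c_i$ are the extremal rays of the cone cut out by $\mathcal{B}(\,\cdot\,,\alpha_i)\ge 0$ — a standard fact about the Coxeter complex in the free-product case, or one checks directly that $\mathcal{B}(c_i,\alpha_k)=\delta_{ik}\ge 0$ and that these span). So $\Phi(D)=\Nef(X)$. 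Part (3): apply $\Phi$ to $T_J=\bigcup_{w\in W_J}\rho(w)(D)$; using part (1), $\Phi(\rho(w)(D)) = \Psi(w)^*(\Phi(D)) = \Psi(w)^*(\Nef(X))$, so $\Phi(T_J) = \bigcup_{w\in W_J}\Psi(w)^*(\Nef(X)) = \bigcup_{g\in \langle\iota_j\rangle} g^*(\Nef(X))$. By Proposition \ref{propBirIsoInvo}, $\Bir(X) = \Aut(X)\cdot\langle\iota_j\mid j\in J\rangle$, and $\Aut(X)$ acts trivially on $N^1(X)_\R$ by Proposition \ref{propAutAction}, so this union equals $\bigcup_{g\in\Bir(X)} g^*(\Nef(X))$. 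It remains to identify this with $\Mov^e(X)$: each $g^*\Nef(X)$ is the nef cone of a marked minimal model of $X$ (via the flops $\iota_j$, which by Proposition \ref{propBirIsoInvo} are isomorphisms in codimension one), hence lies in $\Mov^e(X)$; conversely, by the standard theory (Theorem \ref{thmKFlops} together with Kawamata's result that on a Calabi-Yau manifold every movable effective class is nef on some birational model, and that model is reached by flops) every class in $\Mov^e(X)$ lies in some $g^*\Nef(X)$. I would state this last containment as following from the chamber decomposition of $\Mov^e(X)$ into nef cones of the (finitely many, here all reached by $\langle\iota_j\rangle$) minimal models, citing Kawamata; the key input that there are no other flopping contractions than the $\iota_j$ comes from Theorem \ref{thmKContraction} and the simpliciality of $\Nef(X)$, as already used in Proposition \ref{propBirIsoInvo}.

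\emph{Main obstacle.} The delicate point is not the formal transport (parts 2 and 3 above are essentially bookkeeping once part 1 is in hand) but rather the precise identification in part (3) of $\bigcup_{g\in\Bir(X)} g^*\Nef(X)$ with $\Mov^e(X)$ — i.e. checking that the $\Bir(X)$-translates of the nef cone \emph{exactly} tile the movable effective cone, with no gaps and no overlaps in the interior. The ``no overlaps'' part is where the Tits-cone theory of Vinberg pays off (the $\rho(w)(D)$ have disjoint interiors because $D$ is a fundamental domain for $W$, hence $T_J$-chambers have disjoint interiors, hence so do their $\Phi$-images since $\Phi$ restricted to the relevant region is injective — which itself needs a brief argument, e.g. that $\Phi$ is an isomorphism $V\xrightarrow{\sim}N^1(X)_\R$ since it sends a basis to a basis). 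The ``no gaps'' part — that $T_J$ already exhausts the preimage of $\Mov^e(X)$ and one does not need the full Tits cone $T$ of $W$ — is exactly the content of ``the only flops are the $\iota_j$'', and I would make sure the argument that $\Mov^e(X)$ contains no class nef on a model unreachable by $\langle\iota_j\rangle$ is spelled out via the face structure of $\Nef(X)$ and Theorem \ref{thmKContraction}, since this is the step most easily glossed over.
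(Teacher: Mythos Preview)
Your treatment of parts (1) and (2) matches the paper's: the paper dispatches (1) by pointing to the matrix computations in Section~\ref{section:ComputingBir} (Proposition~\ref{propMatrices}) and (2) by the observation that $D$ and $\Nef(X)$ are the simplicial cones on $\{c_i\}$ and $\{h_i\}$ respectively.

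For part (3), however, your route diverges from the paper's and contains a genuine gap. You propose to obtain the inclusion $\Mov^e(X)\subseteq \bigcup_{g\in\Bir(X)} g^*\Nef(X)$ by ``citing Kawamata'' for a chamber decomposition of $\Mov^e(X)$ into nef cones of SQMs, together with Theorem~\ref{thmKFlops}. But no such black-box decomposition is available in the needed form: for a big movable class one can run a $D$-MMP, and here each step is one of the flops $\iota_j$, but \emph{termination} of that sequence of flops is precisely the nontrivial content, and it is not supplied by Theorem~\ref{thmKFlops} (which only decomposes a given birational map) or by Theorem~\ref{thmKContraction}. Invoking the chamber decomposition here is effectively circular, since exhausting $\Mov^e(X)$ by nef chambers is the Kawamata--Morrison statement you are trying to prove. (Also, your parenthetical ``finitely many'' minimal models is a slip: when $|J|\ge 2$ there are infinitely many marked chambers, all abstractly isomorphic to $X$.)

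The paper fills exactly this gap with two ingredients you do not use. First, it proves directly that $T_J$ is convex (Proposition~\ref{propTitsConvex}), via a Coxeter-theoretic argument identifying $T_J = T\cap P$ for an explicit intersection $P$ of half-spaces. Second, it gives an elementary descent (Lemma~\ref{lemmaEffectiveComputation}): for a pseudoeffective integral class $E=\sum\beta_i h_i$, intersection-number inequalities force at most one $\beta_j<0$ (with $j\in J$), and applying $\iota_j^*$ strictly decreases the nonnegative integer $s(E)=\sum\beta_i$; hence the process terminates in $\Nef(X)$. Convexity of $T_J$ then upgrades this from integral effective classes to all of $\Mov^e(X)$. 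This descent argument is the substitute for the termination-of-flops step you left implicit, and it is where the specific numerics $b_{ij}\ge 2$ are actually used.
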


The first statement follows from the computations of Section \ref{section:ComputingBir}. The second statement follows simply because both $D$ and $\Nef(X)$ are the convex hull of the $c_i$'s and $h_i$'s respectively. So we need to prove the last statement.

First we need to show that the subcone $T_J$ is convex.

\begin{proposition}
The cone $T_J$ is convex.
\label{propTitsConvex}
\end{proposition}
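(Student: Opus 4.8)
The plan is to exploit the fact that $T_J$ is the $W_J$-orbit of the simplicial cone $D$, and that $D$ is cut out by the walls $\{\mathcal{B}(-,\alpha_j) \geq 0\}$ for $j \in J$ together with the walls $\{\mathcal{B}(-,\alpha_k)\geq 0\}$ for $k \notin J$, which are fixed by $W_J$. First I would record the standard Tits-cone geometry from Vinberg's theory: two chambers $\rho(w)(D)$ and $\rho(w')(D)$ are either equal or meet only along a common (possibly empty) face, and the chambers adjacent to $D$ across the wall $H_j = \{\mathcal{B}(-,\alpha_j)=0\}$ (for $j\in J$) is exactly $\rho(s_j)(D)$. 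Since $m_{ij}=\infty$ for $i\neq j$, the group $W_J$ is the free product of the $\Z/2\Z$'s generated by $\{s_j\mid j\in J\}$, so its Cayley graph with respect to this generating set is a tree; this tree structure is what makes the argument clean.

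The key step is to show that $T_J$ is the intersection of half-spaces, hence convex. Concretely, I claim
\[
T_J = \bigcap_{k\notin J} D_{\alpha_k}^+ \;=\; \{\, v \in V \mid \mathcal{B}(v,\alpha_k)\geq 0 \text{ for all } k\notin J\,\}.
\]
The inclusion $\subseteq$ is immediate: each generator $\rho(s_j)$ with $j\in J$ fixes $c_k$ for $k\notin J$ (as noted right before the statement, $\rho(s_j).c_k = c_k$), equivalently it fixes the linear functional $\mathcal{B}(-,\alpha_k)$, so every chamber $\rho(w)(D)$ with $w\in W_J$ lies in $D_{\alpha_k}^+$. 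For the reverse inclusion I would argue as follows. Let $v$ lie in the right-hand side. Using the $W_J$-action we may move $v$ into the "dominant" region for the remaining walls: among all $w\in W_J$ choose one of minimal word length such that $w^{-1}v$ is as deep as possible, i.e. such that $\mathcal{B}(w^{-1}v,\alpha_j)\geq 0$ for all $j\in J$ — this is possible because the walls $H_j$, $j\in J$, together with the fixed walls $H_k$ bound the fundamental domain $D$ of the action of $W_J$ on the Tits subcone, and the standard reflection-group argument (repeatedly apply $s_j$ whenever $\mathcal{B}(-,\alpha_j)<0$; word length strictly decreases, using the tree/free-product structure to see the process terminates) produces such a $w$. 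Then $w^{-1}v\in D$, so $v\in \rho(w)(D)\subset T_J$. I would be a little careful at the boundary: points with $\mathcal{B}(v,\alpha_k)=0$ for some $k$, or points on the boundary of the Tits cone, should be handled by noting $T_J$ contains $-\mathcal{K}_J$-type limit behaviour is irrelevant here since $T_J$ as defined is just the open-chamber orbit's cone; if necessary one passes to closures, but the half-space description already exhibits $T_J$ (or its closure) as convex.

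The main obstacle I anticipate is making the descent argument fully rigorous at the level of the whole cone rather than just in the interior — specifically, ensuring that every $v$ with $\mathcal{B}(v,\alpha_k)\geq 0$ for $k\notin J$ can be carried into $D$ by $W_J$, including the degenerate cases where $v$ lies on several walls or on the boundary of the Tits cone (where Vinberg's fundamental-domain statement is about the open cone). I would resolve this by invoking that $D$ is a fundamental domain for $W_J$ acting on $T_J$ in the strict sense of \cite[Theorem 2, Proposition 8]{Vinberg71} applied to the sub-Coxeter-system $(W_J,\{s_j\}_{j\in J})$, together with the observation that the functionals $\mathcal{B}(-,\alpha_k)$, $k\notin J$, are $W_J$-invariant, so that $T_J$ is genuinely the slice $\{\mathcal{B}(-,\alpha_k)\geq 0 \ \forall k\notin J\}\cap T$ of the full Tits cone $T$; being an intersection of the convex set $T$ with half-spaces, it is convex. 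This reduces everything to convexity of $T$ itself, which is Vinberg's theorem quoted in Section \ref{subsection:CG}.
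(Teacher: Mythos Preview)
There is a genuine gap. Your central claim --- that for $j\in J$ and $k\notin J$ the reflection $\rho(s_j)$ fixes the linear functional $\mathcal{B}(-,\alpha_k)$, ``equivalently'' to fixing $c_k$ --- is false, and the two statements are not equivalent. Since $\rho(s_j)$ is a $\mathcal{B}$-isometry, fixing the functional $\mathcal{B}(-,\alpha_k)$ amounts to fixing $\alpha_k$, but
\[
\rho(s_j)\alpha_k = \alpha_k - 2\mathcal{B}(\alpha_k,\alpha_j)\alpha_j = \alpha_k + 2c_{jk}\alpha_j \neq \alpha_k,
\]
because $c_{jk}\geq 1$. What $\rho(s_j)$ fixes is the \emph{vector} $c_k$, which is dual to $\alpha_k$; the functional is not $W_J$-invariant. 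Consequently the half-space $D_{\alpha_k}^+$ is not $W_J$-stable, and your proposed identity $T_J = T\cap \bigcap_{k\notin J} D_{\alpha_k}^+$ fails. For instance with $l=2$, $J=\{1\}$, one checks directly that the interior of the chamber $\rho(s_1s_2)(D)$ lies in $D_{\alpha_2}^+$ (the relevant root $\rho(s_2s_1)\alpha_2$ is positive) yet $s_1s_2\notin W_J$, so this chamber is contained in $T\cap D_{\alpha_2}^+$ but not in $T_J$. The same example breaks your descent: after one application of $s_1$ the point lands in $\rho(s_2)(D)$, where $\mathcal{B}(-,\alpha_1)\geq 0$ so your algorithm halts, but $\mathcal{B}(-,\alpha_2)<0$, so you are not in $D$.

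The paper's proof repairs exactly this: instead of the single half-space $D_{\alpha_k}^+$ it uses the entire $W_J$-orbit, setting $P=\bigcap_{w\in W_J}\bigcap_{k\notin J}\rho(w)(D_{\alpha_k}^+)$, and proves $T_J = T\cap P$. Since the half-spaces $\rho(w)(D_{\alpha_k}^+)$ genuinely vary with $w$, showing that $T\cap P$ contains no chamber $D_{w'}$ with $w'\notin W_J$ requires work: the paper does this via a gallery argument (its Lemma on chambers), tracing a segment through adjacent chambers and using that $W$ is a free product of copies of $\Z/2\Z$ to derive a contradiction from two reduced expressions for the same element. Your tree/free-product intuition is the right ingredient, but it has to be deployed at this later stage, not to assert an invariance that does not hold.
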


\begin{proof}
First, if $J = \{1,\ldots,l\}$, then $W_J = W$ and $T_J$ corresponds to the Tits cone which is convex.

Now assume that $J \subsetneq \{1,\ldots, l\}$. For $w \in W$, let $D_w := \rho(w)(D)$ the chamber obtained by the action of $w$ on the fundamental domain, with vertices $v_j := \rho(w)c_j$. The chamber $D_{ws_i}$ is adjacent to $D_w$, and \[ D_w \cap D_{ws_i} = \{\sum_{j\neq i} a_jv_j \mid a_j\geq 0\} \]

For each $D_w$, let $\Pi_w^i$ be the half-space defined by the hyperplane $\mathrm{Span}\{v_1,\ldots,\hat{v}_i,\ldots,v_l\}$ that contains the vector $v_i$. Notice that $D_{ws_i}$ is not contained in $\Pi_w^i$. Define \[ P:= \bigcap_{w\in W_J} \bigcap_{i\not \in J} \Pi_w^i. \]

We want to show that $T_J = T \cap P$. Because $T$ is convex, and $P$ is the intersection of half-spaces, then it follows that $T_J$ is convex.

We use the following lemma.

\begin{lemma}
If $\mathrm{int}(D_w) \cap P^c \neq \emptyset$, then $\mathrm{int}(D_w) \subseteq P^c$.
\label{lemmaChambers}
\end{lemma}

\begin{proof}
By contradiction, assume that there exists $w'\in W_J$, $i\in J$ and $x,y\in \mathrm{int}(D_w)$ such that $x\in \Pi_{w'}^i$ and $y \in (\Pi_{w'}^i)^c$.

By \cite[Lemma 1]{Vinberg71}, each segment in $T$ passes through a finite number of chambers, so we can choose $x'\in \mathrm{int}(D_{w'})$ and $y'\in \mathrm{int}(D_{w's_i})$ such that the segments $\overline{xx'}$ and $\overline{yy'}$ satisfy the following:
\begin{enumerate}
\item The segment intersects at most at one point of each codimension one face of the chambers that it passes through; and
\item the point of intersection of each face lies in the interior of the codimension one face.
\end{enumerate}

Under these conditions, the chambers that intersect the segments $\overline{xx'}$ and $\overline{yy'}$ form sequences \[ D_{w'} = D_{w_1} ,D_{w_2},\ldots ,D_{w_{m+1}} = D_w \] and \[ D_{w's_i} = D_{w'_1} ,D_{w'_2},\ldots ,D_{w'_{r+1}} = D_w \] with \[ D_{w_{j+1}} = D_{w_js_{k_j}} \qquad ;\qquad D_{w'_{j+1}} = D_{w'_js_{k'_j}} \] and $s_{k_{j+1}} \neq s_{k_j}$ and $s_{k'_{j+1}} \neq s_{k'_j}$.

\begin{figure}[h]
\begin{center}
\includegraphics[scale=1.2]{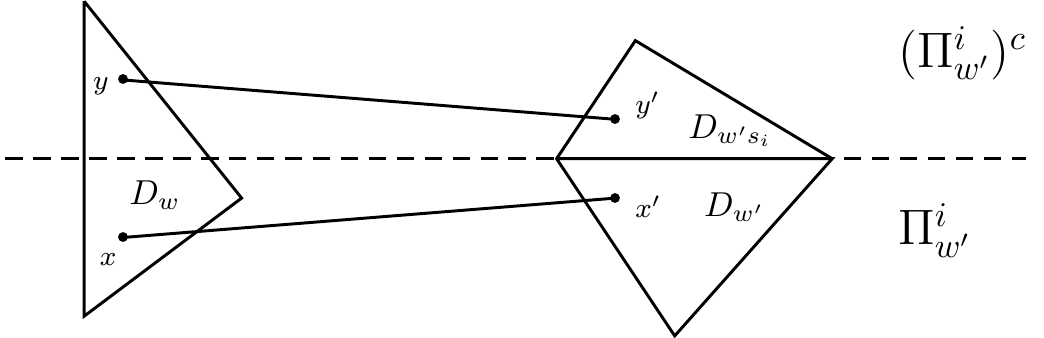}
\end{center}
\caption{An example of the situation in Lemma \ref{lemmaChambers}.}
\end{figure}

Because $\overline{xx'}\subset \Pi_{w'}^i$ and $\overline{yy'} \subset (\Pi_{w'}^i)^c$, we have that $s_{k_1} \neq s_i$ and that $s_{k'_1} \neq s_i$. Then \[ w = w'\prod_{j=1}^m s_{k_j} = w's_i\prod_{j=1}^r s_{k'_j} \] so \[\prod_{j=1}^m s_{k_j} = s_i\prod_{j=1}^r s_{k'_j}.\] But the previous condition implies that both are reduced expressions, $s_{k_1} \neq s_i$, contradicting the fact that $W$ is a free product of $l$ copies of $\Z/2\Z$.

Therefore either $\mathrm{int}(D_w) \subset P$ or $\mathrm{int}(D_w) \subset P^c$.
\end{proof}

Back to the proof of Proposition \ref{propTitsConvex}. Lemma \ref{lemmaChambers} implies that it is enough to show that a chamber $D_w\subset P^c \cap T$ if and only if $w\not \in W_J$. To do so, we use the same segment argument used in the proof of Lemma \ref{lemmaChambers}.

First, assume that $D_w\subset P^c \cap T$, so $D_w \subset (\Pi_{w'}^i)^c$ for some $w'\in W_J$ and $i\not \in J$. Then there exists a segment $\overline{xy}$ with $x\in D_w$ and $y\in D_{w's_i}$ totally contained in $(\Pi_{w'}^i)^c$. Then \[ w = w's_i\prod_{j=1}^r s_{k'_j} \] implying that $w\not \in W_J$.

Assume now that $w\not \in W_J$. We can write $w = w's_iw''$, with $w' \in W_J$. If $D_w \subset \Pi_{w'}^i$, then there exists a segment $\overline{xy}$ with $x\in D_w$ and $y\in D_{w'}$. Then $w = w's_{k_1}\prod_{j=2}^r s_{k_j}$, with $s_{k_1}\prod_{j=2}^r s_{k_j}$ a reduced expression, and $s_{k_1} \neq s_i$, which is a contradiction.

Hence $D_w\subset P^c \cap T$ if and only if $w\not \in W_J$.

To finish the proof, let $w'\in W_J$. We just proved that $D_{w'}\not \subset P^c$, so by Lemma \ref{lemmaChambers} $D_{w'} \subset P$, implying that $T_J = T \cap P$.
\end{proof}

Now, because the divisor classes $h_j$ are semiample (and therefore movable) and they generate $\Nef(X)$, we have that $\Nef(X) \subset {\Mov\,}^e(X)$. Because the movable effective cone is invariant under the action of $\Bir(X)$, we have that the orbit of the nef cone $\Nef(X)$ is contained in ${\Mov\,}^e(X)$, so \[ \Phi(T_J) \subseteq {\Mov\,}^e(X). \]

We then need to show the following proposition.

\begin{proposition} Let $X$ be a general complete intersection Calabi-Yau manifold in $\P(\mathbf{n})$ that satisfies Theorem \ref{mainThm} (1) and (2). 
\begin{enumerate}
\item For all $g\in \Bir(X)$ such that $g^* \neq \mathrm{Id}$, $g^*(\Nef(X)) \cap \Nef(X) = \emptyset$.
\item ${\Mov\,}^e(X) \subseteq \Phi(T_J)$.
\end{enumerate}
\label{propIsoTitsNef}
\end{proposition}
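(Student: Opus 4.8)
\emph{Proposal.} I will treat the two parts separately; part (1) is short given what is already in place, and part (2) is the substantial one.

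\textbf{Part (1).} By Proposition~\ref{propBirIsoInvo} and Proposition~\ref{propAutAction}, every $g\in\Bir(X)$ acts on $N^1(X)_\R$ through its image $w\in\langle\iota_j\mid j\in J\rangle\cong W_J$, and $g^*=\mathrm{Id}$ exactly when $w=1$ (the geometric representation of $W$ attached to $B$ is faithful). By Theorem~\ref{thmMapping}(1)--(2) we have $g^*(\Nef(X))=\Phi(\rho(w)(D))$, and $\Phi$ is a linear isomorphism since it sends the basis $\{c_i\}$ of $V$ to the basis $\{h_i\}$ of $N^1(X)_\R$. Because $D$ is a fundamental domain for the action of $W$ on its Tits cone (Vinberg, as recalled in Section~\ref{subsection:CG}), distinct chambers $\rho(w)(D)$ and $\rho(w')(D)$ meet only along lower-dimensional faces; transporting this by $\Phi$ gives $\mathrm{int}\,\Nef(X)\cap\mathrm{int}\,g^*\Nef(X)=\emptyset$ for $g^*\neq\mathrm{Id}$, equivalently $\Amp(X)\cap g^*\Amp(X)=\emptyset$, which is the assertion.

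\textbf{Part (2), reduction.} We already have $\Nef(X)=\Phi(D)\subseteq\Phi(T_J)\subseteq{\Mov\,}^e(X)$, and $\Phi(T_J)$ is convex because $T_J$ is (Proposition~\ref{propTitsConvex}) and $\Phi$ is linear; moreover $\Phi(T_J)=\bigcup_{g\in\Bir(X)}g^*\Nef(X)$, since $\Aut(X)$ acts trivially on $N^1(X)_\R$. So it suffices to show ${\Mov\,}^e(X)\subseteq\Phi(T_J)$. By convexity of $\Phi(T_J)$ and density of $\Q$-classes in the open cone $\mathrm{int}\,{\Mov\,}^e(X)$ (whose points are big and movable, the interior of $\Eff(X)$ being $\BBig(X)$), it is enough to prove that every movable big $\Q$-divisor class $D$ lies in $\Phi(T_J)$, and then to treat the boundary of ${\Mov\,}^e(X)$ separately.

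\textbf{Part (2), interior classes via the MMP.} Fix $A\in\Amp(X)$ and run a $D$-MMP on $X$; since $K_X\equiv 0$ this is a $(K_X+D)$-MMP, and since $D$ is movable its strict transform stays movable and no step contracts a divisor, so the program is a sequence of $D$-flops. By Theorem~\ref{thmKContraction} each flopping contraction occurring is, up to an automorphism of $X$, the contraction $\overline{\pi}_j$ of a codimension-one face $\sum_{k\neq j}\R_{\geq 0}h_k$ of $\Nef(X)$, and the corresponding flop is the involution $\iota_j$ with $j\in J$ (Proposition~\ref{propBirIsoInvo}); in particular every intermediate model is again $X$ with a new marking. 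Guiding the choice of flop at each step by the segment $[A,D]$ (legitimate because the chambers have disjoint interiors by part (1) and tile $\Phi(T_J)$), the chambers produced are exactly those met by $[A,D]$ inside $\Phi(T_J)$, and these are finite in number by Vinberg's lemma that a segment in the Tits cone passes through finitely many chambers (the lemma already used in the proof of Lemma~\ref{lemmaChambers}); hence the MMP terminates with some $g\in\Bir(X)$ such that $(g^{-1})^*D$ is nef, i.e. $D\in g^*\Nef(X)=\Phi(\rho(w)(D))\subseteq\Phi(T_J)$. By density and convexity this yields $\mathrm{int}\,{\Mov\,}^e(X)\subseteq\Phi(T_J)$.

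\textbf{Part (2), boundary classes, and the main obstacle.} Finally let $D\in{\Mov\,}^e(X)$ lie on the boundary of ${\Mov\,}^e(X)$; then $D$ is effective. For ample $A$ and small $\epsilon>0$ the class $D+\epsilon A$ is big and movable, hence in $\mathrm{int}\,{\Mov\,}^e(X)\subseteq\Phi(T_J)$; letting $\epsilon\to0$, the segment $D+\R_{>0}A$ accumulates at $D$ inside the union of closed chambers. If $D$ belonged to no closed chamber this segment would meet infinitely many chambers whose directions accumulate at $\widehat{D}$, so $\widehat{D}$ would be an accumulation point of the chamber decomposition of $T_J$; such a direction is non-positive for $\mathcal{B}$ and in particular is not big. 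Hence either $D$ lies on an outer face of some chamber — for instance on the $\Bir(X)$-orbit of a face $\sum_{k\neq i}\R_{\geq 0}h_k$ with $i\notin J$, coming from the fibration $\pi_i$, which does belong to $\Phi(T_J)$ — or else $D$ is not big and must be ruled out as an accumulation direction. I expect this last exclusion to be \emph{the main obstacle}: the cleanest route is to use that a big movable $\Q$-class always lies in a chamber (just shown), so an accumulation direction is never big, and then to combine the movability of $D$ (vanishing of its Nakayama--Zariski negative part) with the geometry of the contractions $\overline{\pi}_j$ — or, when $W_J$ is Lorentzian, with Theorem~\ref{thmInfiniteWord} and a direct analysis of the eigenvectors $v_\lambda$ of $(\iota_i\iota_j)^*$ as in Theorem~\ref{mainThm2} — to conclude that such a $D$ would be numerically trivial along a fibration and hence already lie on an outer face. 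Combining the cases gives ${\Mov\,}^e(X)\subseteq\Phi(T_J)$, and together with $\Phi(T_J)\subseteq{\Mov\,}^e(X)$ the equality ${\Mov\,}^e(X)=\Phi(T_J)$ underlying Theorem~\ref{mainThm}(c) and Conjecture~\ref{ConjectureKM}(b).
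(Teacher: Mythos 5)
Part (1) of your proposal is correct but takes a different route: you get disjointness of the open chambers from faithfulness of the geometric representation plus Vinberg's fundamental-domain theorem, whereas the paper simply quotes Kawamata's Lemma 1.5 (a birational self-map whose pullback meets the nef cone in an ample class is an automorphism) together with Proposition \ref{propAutAction}. Both work; note only that your version is logically downstream of Proposition \ref{propBirIsoInvo}, while Kawamata's lemma needs no prior structure theory of $\Bir(X)$.

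Part (2) has a genuine gap, which you have located but not closed. First, your termination argument for the $D$-MMP is circular as written: Vinberg's finiteness statement applies to segments contained in the Tits cone, and whether $[A,D]$ lies in $\Phi(T_J)$ --- equivalently, crosses only finitely many chamber walls before reaching $D$ --- is precisely what is to be proved. (This could be repaired by invoking BCHM termination of the $(K_X+\epsilon D)$-MMP with scaling for $D$ big, since $K_X\equiv 0$, but that is a much heavier input than anything the paper uses.) Second, and more seriously, ${\Mov\,}^e(X)=\Mov(X)\cap\Eff(X)$ forces you to place in $\Phi(T_J)$ every integral effective class occurring as a summand of an element of $\Eff(X)$; such classes need not be big, nor even movable, and your treatment of them ends with ``I expect this last exclusion to be the main obstacle'' plus a sketch. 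The paper's Lemma \ref{lemmaEffectiveComputation} dissolves both issues at once by an elementary descent: writing an integral pseudoeffective class as $E=\sum_i\beta_i h_i$ and intersecting with suitable monomials in the nef classes $h_k$ gives $\beta_k\ge 0$ for $k\notin J$, at most one $\beta_j<0$ with $j\in J$, and $s(E)=\sum_i\beta_i\ge 0$; applying $\iota_j^*$ for the offending $j$ strictly decreases the non-negative integer $s(\cdot)$, so after finitely many reflections the class is nef. This is uniform in all pseudoeffective integral classes, with no interior/boundary case split and no MMP. Without either that descent or a complete argument for the non-big effective classes, the inclusion ${\Mov\,}^e(X)\subseteq\Phi(T_J)$ is not established.
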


\begin{proof}
If $g\in \Bir(X)$ satisfies $g^*(\Nef(X)) \cap \Nef(X) \neq \emptyset$, then \cite[Lemma 1.5]{Kawamata97} implies that $g\in \Aut(X)$. From Proposition \ref{propAutAction} we have that $\Aut(X)$ acts trivially on $N^1(X)_\R$, so $g^* = \mathrm{Id}$, which completes the proof of (1).

To prove (2) we need the following lemma.

\begin{lemma}[cf. \cite{CantatOguiso15}, Lemma 4.4]
For any given pseudoeffective integral divisor class $E$ of $X$, there is a birational automorphism $g$ of $X$ such that $g^*(E) \in \Nef(X)$
\label{lemmaEffectiveComputation}
\end{lemma}

\begin{proof}
Define $E_1 := E$. In $N^1(X)_\R$ we can write \[ E_1 = \sum_{i=1}^l \beta_i(E_1)h_i, \] where the $\beta_i(E_1)$ are integers. Set \[ s(E_1):= \sum_{i=1}^l \beta_i(E_1). \]

Since $E_1$ is pseudoeffective, and the $h_i$ are nef, we get the following facts: 
\begin{enumerate}
\item $\beta_k (E_1)\geq 0$ for $k\not \in J$. This follows from the fact that $n_k > n$ and \[ \left(\prod_{i=1}^n a_{ik} \right)\beta_k(E_1) = \left(E_1.h_k^{n_k-(n+1)}.\prod_{i\neq k} h_i^{n_i}\right)_X \geq 0. \]
\item At most one $\beta_j(E_i) < 0$ for $j\in J$. To get this, notice that for $j_1,j_2\in J$ \[ \left(E_1.h_{j_1}^{n-1}.\prod_{i\neq j_1,j_2} h_i^{n_i}\right)_X = 2\beta_{j_1}(E_1) + b_{j_1,j_2}\beta_{j_2}(E_1) \geq 0 \] and \[ \left(E_1.h_{j_2}^{n-1}.\prod_{i\neq j_1,j_2} h_i^{n_i}\right)_X = 2\beta_{j_2}(E_1) + b_{j_2,j_1}\beta_{j_1}(E_1) \geq 0. \] From Proposition \ref{propMatrixDefined} we have that $b_{j_1,j_2} = b_{j_2,j_1}\geq 0$, so $\beta_{j_1} + \beta_{j_2} \geq 0$. This implies that at most one $\beta_j(E_1) <0$ for $j\in J$.
\end{enumerate}

Even more, this implies that $s(E_1)$ is non-negative. Say for example that $1,2\in J$ and that $\beta_1(E_1)<0$. Then \[ s(E_1) = (\beta_1(E_1) + \beta_2(E_2)) + \beta_3(E_1) +\cdots + \beta_l(E_1) \geq 0 \]

If $E_1\in \Nef(X)$ we can take $g=\mathrm{Id}$.  So we can assume that $E_1$ is not nef, and so that $\beta_j(E_1) <0$ for some $j\in J$.

Consider the divisor class $E_2 := \iota_j^*(E_1)h_j$. It is effective and so $s(E_2)\geq 0$, and by the matrices from Proposition \ref{propMatrices} we have \[ E_2 = - \beta_j(E_1)h_j + \sum_{i\neq j} (\beta_i(E_1) + b_{ij}\beta_j(E_1))h_i. \] We can compute $s(E_2)$ to obtain
\begin{align*}
s(E_2) &= -\beta_j(E_1) + \sum_{i\neq j} (\beta_i(E_1) + b_{ij}\beta_j(E_1))\\
	&= s(E_1) - 2\beta_j(E_1) + \sum_{i\neq j} b_{ij}\beta_j(E_1) \\
	&< s(E_1),
\end{align*} where the last inequality follows from the fact that the $b_{ij} \geq 2$ and $\beta_j(E_1) < 0$.

If all the $\beta_i(E_2)$ are non-negative, then $E_2\in\Nef(X)$ and we are done. If not, there is a unique $j'\in J$ such that $\beta_{j'}(E_2) <0$. Consider $E_3 = \iota_{j'}^*(E_2)$. Then, as above, we get that $E_3$ is effective and $s(E_3) < s(E_2)$. At each step of this process $s(\cdot)$ decreases, and because $s$ is non-negative, this process stops. This implies that there exists an effective divisor $E_k$ in the $\Bir(X)$-orbit of $E_1$ such that $E_k$ is nef.

\end{proof}

\begin{example}
As in Example \ref{example1}, let $X$ be given by the intersection of general divisors of multidegree $(2,2,1)$, $(2,1,1)$ and $(1,1,2)$ in $\P^4\times \P^3 \times \P^3$. Let $E_1 = \beta_1(E_1)h_1 + \beta_2(E_1)h_2 + \beta_3(E_1)h_3$ a pseudoeffective integral divisor class. Then \[ 4\beta_1(E_1) = (E_1.h_1.h_2^3.h_3^3)_X \geq 0 \] and \begin{align*}
2\beta_2(E_1) + 7\beta_3(E_1) &= (E_1.h_1^4.h_2^2)_X \geq 0 \\
7\beta_2(E_1) + 2\beta_3(E_1) &= (E_1.h_1^4.h_3^2)_X \geq 0
\end{align*}
so $\beta_2(E) + \beta_3 \geq 0$. Assume that $\beta_2(E_1) < 0$. Then we compute $E_2 = \iota_2^*(E_1)$
\begin{align*}
E_2 &= \beta_1(E_1)h_1 + (8\beta_2(E_1)h_1 - \beta_2(E_1)h_2 + 7\beta_2(E_1)h_3) + \beta_3(E_1)h_3\\
	&= (\beta_1(E_1) + 8\beta_2(E_1))h_1 - \beta_2(E_1)h_2 + (\beta_3(E_1) + 7\beta_2(E_1))h_3
\end{align*}
and so
\begin{align*}
s(E_2) 	&= (\beta_1(E_1) + 8\beta_2(E_1)) - \beta_2(E_1) + (\beta_3(E_1) + 7\beta_2(E_1))\\
		&= \beta_1(E_1) + 14\beta_2(E_1) + \beta_3(E_1)\\
		&= s(E_1) + 13\beta_2(E_1)\\
		&< s(E_1).
\end{align*}
\end{example}
~\\
To finish the proof of Proposition \ref{propIsoTitsNef}, let $u$ be an element of ${\Mov\,}^e(X)$. We can write \[ u = \sum_{i=1}^m r_iE_i \] with $r_i$ positives real numbers and $E_i$ effective divisor classes. From the previous lemma we have that each $E_i$ is in the image of $T_J$, and because $T_J$ is convex we have that $u\in \Phi(T_J)$, and so ${\Mov\,}^e(X) \subseteq \Phi(T_J)$.
\end{proof} %of proposition

\begin{corollary}[Theorem \ref{mainThm} (c)]
Conjecture \ref{ConjectureKM} holds for $X$.
\end{corollary}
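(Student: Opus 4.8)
The plan is to assemble the ingredients already established — Theorem \ref{thmMapping}, Proposition \ref{propIsoTitsNef}, Proposition \ref{propBirIsoInvo} and Proposition \ref{propAutAction} — into a direct verification of both clauses of Conjecture \ref{ConjectureKM}, taking the fundamental domain to be $C = \Nef(X)$ in each case.

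For clause (a) I would first record that $\Nef(X)$ is rational polyhedral: it is the simplicial cone $\mathrm{cone}(h_1,\dots,h_l)$ with rational generators, and since each $h_j$ is semiample it lies in $\Eff(X)$, whence $\Nef^e(X) = \Nef(X)$. By Proposition \ref{propAutAction} the group $\Aut(X)$ acts trivially on $N^1(X)_\R$, so $g^* = \mathrm{id}$ for every $g \in \Aut(X)$; therefore $C = \Nef^e(X)$ is (trivially) a rational polyhedral fundamental domain for this action, the union $\bigcup_{g \in \Aut(X)} g^* C$ being $C = \Nef^e(X)$ and the interior-disjointness condition being vacuous.

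For clause (b) I would again set $C = \Nef(X)$ and check the two defining properties of a fundamental domain for the $\Bir(X)$-action on ${\Mov\,}^e(X)$. The covering property follows from Theorem \ref{thmMapping}: by part (3), ${\Mov\,}^e(X) = \Phi(T_J) = \bigcup_{w\in W_J}\Phi(\rho(w)D)$, and by parts (1) and (2) this equals $\bigcup_{w\in W_J}\Psi(w)^*\Nef(X)$; since $\Bir(X) = \Aut(X)\cdot\langle\iota_j\mid j\in J\rangle$ by Proposition \ref{propBirIsoInvo} and $\Aut(X)$ acts trivially, the set of induced linear maps $\{g^*\mid g\in\Bir(X)\}$ coincides with $\{\Psi(w)^*\mid w\in W_J\}$, so ${\Mov\,}^e(X) = \bigcup_{g\in\Bir(X)}g^*C$. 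For interior-disjointness, given $g,h\in\Bir(X)$ with $g^*\neq h^*$ one has $(gh^{-1})^*\neq\mathrm{id}$, so Proposition \ref{propIsoTitsNef}(1) yields $(gh^{-1})^*\Nef(X)\cap\Nef(X)=\emptyset$; applying $(h^{-1})^*$ gives $g^*C\cap h^*C=\emptyset$, and in particular $\mathrm{int}\,g^*C\cap\mathrm{int}\,h^*C=\emptyset$ whenever $g^*\neq h^*$. This is exactly condition (b). (Equivalently, the disjointness is the statement that $D$ is a fundamental domain for $W$ — hence for $W_J$ — on the Tits cone, transported through $\Phi$.)

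I do not expect a real obstacle inside the corollary itself: all the substantive work sits upstream — in Theorem \ref{thmMapping}(3), which rests on the convexity of $T_J$ (Proposition \ref{propTitsConvex}) and on the reduction Lemma \ref{lemmaEffectiveComputation}, and in the flop-decomposition statement Proposition \ref{propBirIsoInvo}. The only points in the present argument that deserve a line of care are the identification $\Nef^e(X)=\Nef(X)$ (so that $C$ is genuinely rational polyhedral) and the passage between the combinatorial fact that $W_J$ tiles $T_J$ by translates of $D$ with disjoint interiors and the geometric fundamental-domain condition demanded by the conjecture — which is harmless precisely because $\Aut(X)$ already acts trivially and the conjecture is phrased in terms of the maps $g^*$.
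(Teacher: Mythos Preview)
Your proposal is correct and follows exactly the approach the paper intends: the corollary is stated without proof precisely because it is an immediate consequence of Theorem \ref{thmMapping} and Proposition \ref{propIsoTitsNef}, together with the triviality of the $\Aut(X)$-action from Proposition \ref{propAutAction}. The only cosmetic slip is that in the interior-disjointness step you should apply $h^*$ (not $(h^{-1})^*$) to pass from $(gh^{-1})^*\Nef(X)\cap\Nef(X)=\emptyset$ to $g^*C\cap h^*C=\emptyset$.
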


\begin{remark}
Performing a similar computation as in Lemma \ref{lemmaEffectiveComputation} we can check that if $\codim X < n$, then every pseudoeffective integral divisor class $E$ of $X$ is nef, implying that $\Nef(X) = \Mov(X)$. This together with Remark \ref{remarkCodimensionBir} implies that Theorem \ref{mainThm} is true when $\codim X < n$.
\label{remarkCodimensionMov}
\end{remark}

The goal now is to describe the boundary of the movable cone $\Mov(X)$. The Kawamata-Morrison conjecture gives a description of ${\Mov\,}^e(X)$ in terms of the action of $\Bir(X)$, but it is not clear what happens at the boundary of the movable cone.

From Theorem \ref{thmMapping} (3), the problem of describing the boundary of the movable cone is equivalent to describing the boundary of the subcone $T_J$ of the Tits cone $T$. In general, this is not an easy task, but we can give a precise description when $W_J$ is Lorentzian.

Then, in order to prove Theorem \ref{mainThm2}, we show the equivalent statement in $T_J \subset V$.

\begin{theorem}
Suppose that the subgroup $W_J$ is Lorentzian. The boundary of the cone $\overline{T_J}$ is the closure of the union of the following sets:
\begin{enumerate}
\item The $W_J$-orbit of the codimension one faces $\{ \sum_{k\neq i} a_kc_k \mid a_k\geq 0\}$, for $i\not \in J$; and
\item The $W_J$-orbit of the cones $\{ a_\lambda v_\lambda + \sum_{k\neq i,j} a_kc_k \mid a_k\geq 0, a_\lambda \geq 0\}$, for $i,j\in J$, where $v_\lambda$ is:
\begin{itemize}
\item If $n \geq 2$, an eigenvector associated to the unique eigenvalue $\lambda > 1$ of $\rho(s_is_j)$; or
\item If $n = 1$, $v_\lambda=0$.
\end{itemize}
\end{enumerate}
\label{thmMovStructure}
\end{theorem}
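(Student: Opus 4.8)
The plan is to work entirely in $V$, where $\overline{T_J}$ is a closed convex cone by Proposition~\ref{propTitsConvex}, and to describe $\partial\overline{T_J}$ by distinguishing boundary points that lie in the open Tits cone $\mathrm{int}\,T$ from those that lie on $\partial\overline T$. For $J'\subseteq\{1,\dots,l\}$ write $F_{J'}:=\{\sum_{k\notin J'}a_kc_k\mid a_k\ge 0\}$ for the face of the fundamental chamber $D$ of type $J'$. Two structural facts will do most of the work: since $W$ is a free product of copies of $\Z/2\Z$, the parabolic $W_{J'}$ is finite exactly when $|J'|\le 1$, so by the standard description of the interior of a Tits cone the relative interior of a chamber-face $\rho(w)(F_{J'})$ lies in $\mathrm{int}\,T$ iff $|J'|\le 1$; and the chamber decomposition is locally finite on $\mathrm{int}\,T$. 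Because $\mathrm{int}\,\overline{T_J}\subseteq\mathrm{int}\,\overline T=\mathrm{int}\,T$, every point of $\overline{T_J}\cap\partial\overline T$ already lies on $\partial\overline{T_J}$, so $\partial\overline{T_J}=(\partial\overline{T_J}\cap\mathrm{int}\,T)\cup(\overline{T_J}\cap\partial\overline T)$ and it suffices to identify each piece.

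For the first piece I would argue that if $x\in\partial\overline{T_J}\cap\mathrm{int}\,T$, then by local finiteness $\overline{T_J}$ is a finite union of chambers near $x$, and $x$ lies in the relative interior of a face of one of them; this face cannot be a full chamber (else $x\in\mathrm{int}\,T_J$) nor have codimension $\ge 2$ (such faces avoid $\mathrm{int}\,T$), so $x$ lies on a wall between adjacent chambers $D_w$ and $D_{ws_i}$. Exactly one of these is a $W_J$-chamber: not both (else $x\in\mathrm{int}\,T_J$), and at least one (otherwise a neighbourhood of $x$ would miss $T_J$, contradicting $x\in\overline{T_J}$). Taking it to be $D_w$, so $w\in W_J$ and $ws_i\notin W_J$, and using that $w$ is reduced in the generators $\{s_j\mid j\in J\}$, we get $i\notin J$ and $x\in\rho(w)(F_{\{i\}})$, which is in the orbit of item (1). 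Conversely, for $i\notin J$ the wall $F_{\{i\}}=D\cap\mathrm{Span}\{c_k\mid k\ne i\}$ lies on the bounding hyperplane of the half-space $\Pi_e^i$ from the proof of Proposition~\ref{propTitsConvex} (with $T_J=T\cap P$), so $F_{\{i\}}\cap\mathrm{int}\,P=\emptyset$; since $F_{\{i\}}\subseteq D\subseteq T_J$, this puts $F_{\{i\}}$, and hence its whole $W_J$-orbit, in $\partial\overline{T_J}$. Together with the first half of the next paragraph this gives the inclusion ``$\supseteq$'' of the theorem after taking closures.

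For the second piece, I would first check that each $C_{ij}$ ($i,j\in J$) lies in $\overline{T_J}\cap\partial\overline T$. If $n=1$ then $b_{ij}/2=1$ and $C_{ij}=F_{\{i,j\}}$ is a codimension-$2$ face of $D$: it lies in $D\subseteq T_J$ and, being of type $\{i,j\}$ with $W_{\{i,j\}}$ infinite, it avoids $\mathrm{int}\,T$. If $n\ge 2$ then $b_{ij}/2>1$ by Proposition~\ref{propMatrixDefined}, so $\rho(s_is_j)$ has a unique eigenvalue $\lambda>1$, with eigenvector $v_\lambda\in\mathrm{span}\{\alpha_i,\alpha_j\}$ whose class is the attracting fixed direction of $\rho(s_is_j)$ on $\P V$; the chambers $\rho((s_is_j)^m)D$, whose generators are $c_k$ ($k\ne i,j$) fixed and $\rho((s_is_j)^m)c_i,\rho((s_is_j)^m)c_j$ tending to $\widehat{v_\lambda}$, converge to $\mathrm{cone}(\{c_k\mid k\ne i,j\}\cup\{v_\lambda\})=C_{ij}$, so $C_{ij}\subseteq\overline{T_J}$, and no point of $C_{ij}$ can lie in $\mathrm{int}\,T$ since infinitely many distinct chambers accumulate on it. Hence $\rho(w)(C_{ij})\subseteq\partial\overline{T_J}$ for all $w\in W_J$, completing ``$\supseteq$''. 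For the reverse inclusion take $x\in\overline{T_J}\cap\partial\overline T$. If $x\in\rho(w)D$ for some $w\in W_J$, then $x\in\rho(w)(D\cap\partial\overline T)=\rho(w)\big(\bigcup_{a\ne b}F_{\{a,b\}}\big)$, and $F_{\{a,b\}}\subseteq F_{\{a\}}$ lands in the orbit of (1) when $a\notin J$, while $F_{\{a,b\}}\subseteq C_{ab}$ lands in the orbit of (2) when $a,b\in J$. Otherwise $x$ is an accumulation point of $\{\rho(w)D\mid w\in W_J\}$, so by Theorem~\ref{thmInfiniteWord} $x=\widehat\gamma(\mathbf w)$ for an infinite reduced word $\mathbf w=s_{k_1}s_{k_2}\cdots$ in $W_J$; I would approximate $\mathbf w$ by the eventually $2$-periodic words $\mathbf w_m:=s_{k_1}\cdots s_{k_m}(s_{k_{m-1}}s_{k_m})^\infty$ (all letters in $J$), for which $\widehat\gamma(\mathbf w_m)=\rho(s_{k_1}\cdots s_{k_m})\cdot\widehat\gamma(\overline{(s_{k_{m-1}}s_{k_m})^\infty})$ is a $W_J$-translate of the attracting fixed direction of $\rho(s_{k_{m-1}}s_{k_m})$. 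That direction is $\widehat{v_\lambda^{(k_{m-1}k_m)}}\in C_{k_{m-1}k_m}$ when $n\ge 2$, and when $n=1$ it is the class of $\alpha_{k_{m-1}}+\alpha_{k_m}$, which, since $\mathcal B(\alpha_{k_{m-1}}+\alpha_{k_m},\alpha_t)=-b_{k_{m-1}t}/2-b_{k_m t}/2<0$ for $t\ne k_{m-1},k_m$, equals the class of a positive combination of $\{c_t\mid t\ne k_{m-1},k_m\}$, hence again lies in $C_{k_{m-1}k_m}$. By Theorem~\ref{thmInfiniteWord} these $\widehat\gamma(\mathbf w_m)$ converge to $\widehat\gamma(\mathbf w)=x$, so $x\in\overline{W_J\cdot\bigcup_{i,j\in J}C_{ij}}$, completing ``$\subseteq$''.

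I expect the main obstacle to be this accumulation-point case: one needs the limit directions of the eventually $2$-periodic words to be dense among all limit directions and $\widehat\gamma$ to be continuous enough to force the convergence of $\widehat\gamma(\mathbf w_m)$ to $\widehat\gamma(\mathbf w)$, and this is precisely where the Lorentzian hypothesis — through the convergence statements bundled into Theorem~\ref{thmInfiniteWord} — is indispensable; one must also keep in mind that $\widehat\gamma$ records a point of $\P V$, identifying $v$ with $-v$, which is what reconciles the sign of $\alpha_{k_{m-1}}+\alpha_{k_m}$ with membership in $C_{k_{m-1}k_m}$. Some extra care is also needed for degenerate low-rank configurations (small $|J|$, or $l=2$, where $\{k\mid k\ne i,j\}=\emptyset$) and for the codimension-versus-type bookkeeping of the faces $F_{J'}$. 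Finally, Theorem~\ref{mainThm2} follows by transporting this description through the isomorphism $\Phi$ of Theorem~\ref{thmMapping}: $c_i\mapsto h_i$, $\rho(w)\mapsto\Psi(w)^*$, the eigenvector $v_\lambda$ of $\rho(s_is_j)$ goes to an eigenvector of $(\iota_i\iota_j)^*$ for the eigenvalue $\lambda>1$, and $T_J\mapsto{\Mov\,}^e(X)$, whence $\overline{T_J}\mapsto\Mov(X)$.
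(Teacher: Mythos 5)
Your overall strategy matches the paper's: item (1) comes from the identity $T_J=T\cap P$ of Proposition~\ref{propTitsConvex} (your wall-crossing analysis inside $\mathrm{int}\,T$ is a slightly different but valid packaging of the same fact), the cones $\mathrm{cone}(\{c_k\mid k\ne i,j\}\cup\{v_\lambda\})$ are shown to lie on $\partial\overline{T_J}$ via the dynamics of $\rho(s_is_j)$ (the paper does this with an explicit perturbation argument in Proposition~\ref{propConesBoundary}, you do it by accumulation of the chambers $\rho((s_is_j)^m)D$ --- both work), and the remaining boundary is handled through Theorem~\ref{thmInfiniteWord}. The gap is in that last step, in two places. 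First, your dichotomy for $x\in\overline{T_J}\cap\partial\overline{T}$ is incomplete: if $x$ lies in no $W_J$-chamber, Theorem~\ref{thmInfiniteWord} does \emph{not} give $x=\widehat{\gamma}(\mathbf{w})$. Writing $x=\lim x_i$ with $x_i\in D_{w_i}$ and expanding $x_i$ in the vertices $w_i\cdot c_k$, the limit only lies in the cone generated by the eventually fixed vertices $w_m\cdot c_{k_1},\dots,w_m\cdot c_{k_r}$ together with $\gamma(\mathbf{w})$; points in the relative interior of that limit cone (e.g.\ the red segments in Figure~\ref{figureMovable} away from their endpoints) are neither in a chamber nor equal to $\widehat{\gamma}(\mathbf{w})$, and your argument never places them in the orbit closure of the cones of item (2). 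Second, the convergence $\widehat{\gamma}(\mathbf{w}_m)\to\widehat{\gamma}(\mathbf{w})$ for your eventually $2$-periodic approximants is not a consequence of Theorem~\ref{thmInfiniteWord} as stated, which concerns a single fixed infinite word; it requires a contraction estimate uniform in the word, which you flag but do not supply.

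Both problems disappear if you argue as the paper does: fix $i,j\in J$ with $\{k_1,\dots,k_r\}\cap\{i,j\}=\emptyset$ and apply Theorem~\ref{thmInfiniteWord} to the \emph{fixed} seed directions $\widehat{v_\lambda}$ and $\widehat{c_k}$ (for $k\ne i,j$ not eventually fixed): then $w_N\cdot\widehat{v_\lambda}\to\widehat{\gamma}(\mathbf{w})$ and $w_N\cdot\widehat{c_k}\to\widehat{\gamma}(\mathbf{w})$, while $w_N\cdot c_{k_t}=w_m\cdot c_{k_t}$ stays put, so the translated cones $w_N\cdot C'$ with $C'=\mathrm{cone}(\{c_k\mid k\ne i,j\}\cup\{v_\lambda\})$ converge to the entire limit cone containing $x$. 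This single application of Theorem~\ref{thmInfiniteWord} for one fixed word simultaneously handles every point of the limit cone and makes the $2$-periodic approximation unnecessary. With that replacement your proof closes up; the transport to $\Mov(X)$ via $\Phi$ at the end is exactly as in the paper.
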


First we study the eigenvalues of the matrices involved in the theorem.

\begin{proposition}
For $i \neq j$, the matrix $\rho(s_js_i)$ has one eigenvalue $\lambda >1$ if $n\geq 2$, and if $n=1$, then it is not diagonalizable and 1 is its only eigenvalue.
\label{largeEigenvalue}
\end{proposition}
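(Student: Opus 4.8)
The plan is to reduce the statement to a $2\times 2$ computation on the plane $V_{ij}:=\mathrm{span}(\alpha_i,\alpha_j)$. The key observation is that $\sigma_{\alpha_i}(w)=w-2\mathcal{B}(w,\alpha_i)\alpha_i$ (recall $\mathcal{B}(\alpha_i,\alpha_i)=1$), so each of the reflections $\rho(s_i)=\sigma_{\alpha_i}$ and $\rho(s_j)=\sigma_{\alpha_j}$ changes a vector only by a multiple of $\alpha_i$, resp.\ $\alpha_j$. Hence both preserve $V_{ij}$ and both induce the identity on the quotient $V/V_{ij}$. It follows that $\rho(s_js_i)$ preserves $V_{ij}$, acts as the identity on $V/V_{ij}$ (of dimension $l-2$), and the multiset of eigenvalues of $\rho(s_js_i)$ on $V$ is the two eigenvalues of its restriction $A$ to $V_{ij}$ together with $1$ with multiplicity $l-2$. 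I would deliberately use the quotient $V/V_{ij}$ rather than an orthogonal complement, so the argument stays uniform even when $\mathcal{B}|_{V_{ij}}$ is degenerate, which is exactly the case $n=1$.

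The next step is to compute $A$. Putting $c:=-\mathcal{B}(\alpha_i,\alpha_j)$ and using $\mathcal{B}(\alpha_i,\alpha_i)=\mathcal{B}(\alpha_j,\alpha_j)=1$, one finds that in the basis $\{\alpha_i,\alpha_j\}$
\[ A=\begin{pmatrix} -1 & 2c\\ -2c & 4c^{2}-1\end{pmatrix},\qquad\text{so}\qquad \mathrm{tr}\,A=4c^{2}-2,\qquad \det A=1, \]
and therefore the characteristic polynomial of $A$ is $t^{2}-(4c^{2}-2)t+1$. The one remaining piece of input is the value of $c$. For $i,j\in J$ we have $c=b_{ij}/2$, and the intersection-number bookkeeping from the proof of Proposition~\ref{propMatrixDefined} gives $b_{ij}\in\{2n,2n+1\}$; in particular $c\ge n\ge 1$, with $c=1$ exactly when $n=1$ (when $n=1$ the cycle $X$ is a single divisor having degree $2$ along each $\P^{n_{j}}$, $j\in J$, so $b_{ij}=2$). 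I also note that the same computation applies to any $i\neq j$ once the scalar $\mathcal{B}(\alpha_i,\alpha_j)$ is known.

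The conclusion is then a case split. If $n\ge 2$ then $c>1$, so $\mathrm{tr}\,A=4c^{2}-2>2$ and the discriminant $(4c^{2}-2)^{2}-4=16c^{2}(c^{2}-1)$ is strictly positive; thus $A$ has two distinct real eigenvalues $\lambda_{\pm}$ with $\lambda_{+}\lambda_{-}=1$ and $\lambda_{+}+\lambda_{-}>2$, which forces $\lambda_{+}>1>\lambda_{-}>0$. Together with the $(l-2)$-fold eigenvalue $1$ coming from $V/V_{ij}$, the matrix $\rho(s_js_i)$ has exactly one eigenvalue $>1$. If $n=1$ then $c=1$ and the characteristic polynomial of $A$ is $(t-1)^{2}$, so every eigenvalue of $\rho(s_js_i)$ on $V$ equals $1$; since $\rho$ is faithful and $s_js_i$ has infinite order in $W\cong\underbrace{\Z/2\Z\ast\cdots\ast\Z/2\Z}_{l}$, we get $\rho(s_js_i)\neq\mathrm{Id}$, and an operator all of whose eigenvalues equal $1$ but which is not the identity cannot be diagonalizable (equivalently, $A\neq\mathrm{Id}$ is a single Jordan block, and a non-diagonalizable restriction to an invariant subspace obstructs diagonalizability of the whole operator).

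The computation itself is routine; the only point demanding a little care is the bookkeeping identifying $\mathcal{B}(\alpha_i,\alpha_j)$ with $-b_{ij}/2$ and deciding exactly when it equals $-1$, since that is what separates the parabolic case $n=1$ (unique eigenvalue $1$, non-semisimple) from the hyperbolic case $n\ge 2$ (a dominant eigenvalue $\lambda>1$). Isolating the invariant plane $V_{ij}$ through the quotient $V/V_{ij}$ is the device that lets both cases be handled at once.
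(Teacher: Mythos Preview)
Your argument is correct. Both you and the paper ultimately compute the same characteristic polynomial $t^{2}-(b_{ij}^{2}-2)t+1$ for the interesting $2\times 2$ block and then split on whether $b_{ij}=2$ or $b_{ij}>2$. The organization, however, is a bit different. The paper passes to the transpose, works with the explicit $l\times l$ matrices $\iota_i^{*},\iota_j^{*}$ from Proposition~\ref{propMatrices}, multiplies them, and reads off the factorization $(x-1)^{l-2}(x^{2}-b_{ij}^{2}x+2x+1)$ directly. You stay on the Coxeter side, observe that $\sigma_{\alpha_i}$ and $\sigma_{\alpha_j}$ preserve $V_{ij}=\mathrm{span}(\alpha_i,\alpha_j)$ and act trivially on $V/V_{ij}$, and reduce to a trace--determinant computation on a $2\times 2$ matrix. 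Your route is slightly more structural: it makes the $(l-2)$-fold eigenvalue $1$ transparent, it avoids the transpose, and it handles the degenerate (parabolic) case $n=1$ and the nondegenerate (hyperbolic) case $n\ge 2$ uniformly via the quotient rather than an orthogonal complement. The paper's route has the advantage that it reuses the matrices already computed in Proposition~\ref{propMatrices} and requires no further input. Your identification $b_{ij}\in\{2n,2n+1\}$, with $b_{ij}=2$ precisely when $n=1$, is exactly the content the paper uses (stated there as $b_{ij}=2$ when $n=1$ and $b_{ij}>2$ when $n\ge 2$), so the case split lines up.
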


\begin{proof}
First notice that the matrix $(\iota_j\iota_i)^*$ is the transpose of $\rho(s_js_i)$, so it is enough to show the result for $(\iota_j\iota_i)^*$.

From Proposition \ref{propMatrices}, we can write the matrices $\iota_i^*$ and $\iota_j^*$ in term of their columns as \[ \iota_i^* = (e_1\, e_2 \, \cdots\, e_{i-1} \, v_i \, e_{i+1}\, \cdots \, e_l) \] and \[ \iota_j^* = (e_1\, e_2 \, \cdots\, e_{j-1} \, v_j \, e_{j+1}\, \cdots \, e_l) \] where\[v_i = (b_{1i},b_{2i},\ldots,b_{i-1\, i},-1,b_{i+1\, i},\ldots ,b_{li}),\] and \[v_j = (b_{1j},b_{2j},\ldots,b_{j-1\, i},-1,b_{j+1\, j},\ldots ,b_{lj}).\]

Then, by computing the product of these matrices, the characteristic polynomial of $(\iota_j\iota_i)^* = \iota_i^*\iota_j^*$ is \[ (x-1)^{l-2}(x^2 - b_{ij}^2 x +2x + 1). \]

If $n=1$, then $b_{ij} = 2$, and so the characteristic polynomial is $(x-1)^l$, but because $(\iota_j\iota_i)^*$ is not the identity matrix, $(\iota_j\iota_i)^*$ is not diagonalizable.

If $n\geq 2$, then $b_{ij} > 2$, and so $(x^2 - b_{ij}^2 x +2x + 1)$ has two different real roots $\lambda_1,\lambda_2$. Because $\lambda_1\lambda_2 = 1$, then one of them is greater than 1.

\end{proof} 

Notice that the vectors $c_k$ with $k\neq i,j$ are the eigenvectors corresponding to the eigenvalue $1$ of $\rho(s_js_i)$. 

\begin{proposition}
Assume that $n\geq 2$. Let $v_\lambda$ be an eigenvector associated to the largest eigenvalue $\lambda > 1$ of $\rho(s_js_i)$, such that $v_\lambda \in \overline{T}_J$. Then the convex cone generated by the set $\{c_k\mid k\neq i,j\} \cup \{v_\lambda\}$ is a face of the boundary of $\overline{T}_J$.
\label{propConesBoundary}
\end{proposition}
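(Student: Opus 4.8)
The plan is to show that the cone $C := \mathrm{cone}(\{c_k \mid k \neq i,j\} \cup \{v_\lambda\})$ lies in the boundary $\partial \overline{T}_J$ by producing a supporting hyperplane: a linear functional $\ell$ on $V$ that is nonnegative on all of $\overline{T}_J$ and vanishes precisely on $C$ (or at least on all of $C$, with $C$ being a maximal-dimensional piece of $\overline{T}_J \cap \ker\ell$). Since $\dim V = l$ and $C$ is generated by the $l-2$ vectors $c_k$ together with $v_\lambda$, it is $(l-1)$-dimensional, so a face of the right dimension.

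First I would set up coordinates adapted to the two-dimensional "active" subspace. By Proposition \ref{largeEigenvalue}, $\rho(s_j s_i)$ fixes each $c_k$ with $k \neq i,j$, and on the plane $P := \mathrm{span}(\alpha_i,\alpha_j)$ (equivalently, after the identification of $V^*$ with $V$ via $\mathcal{B}$, the plane spanned by $v_i, v_j$ or by the two eigenvectors) it acts with eigenvalues $\lambda > 1 > \lambda^{-1}$. The group $W_J$ itself is generated by $\rho(s_i), \rho(s_j)$, each of which fixes the $c_k$, $k\notin\{i,j\}$, so $W_J$ acts only on the plane $P$ and $T_J$ decomposes as (cone on the $c_k$'s) $\oplus$ (the two-dimensional Tits-type picture in $P$). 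Since $W_J$ is Lorentzian, the restriction of $\mathcal{B}$ to $P$ has signature $(1,1)$, and the picture in $P$ is the classical one: the infinite dihedral (here free product of two $\Z/2\Z$) group acting on a Lorentzian plane, whose Tits cone is bounded by two isotropic rays, namely the two eigenlines of $\rho(s_j s_i)$. The two-dimensional chambers accumulate onto the eigenline of the eigenvalue $> 1$ from one side. So within $P$, the relevant boundary ray of the two-dimensional Tits cone is exactly $\R_{\geq 0} v_\lambda$.

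Next I would assemble this: the supporting functional $\ell$ is $\mathcal{B}(v_\lambda, -)$ up to sign, chosen so that $\mathcal{B}(v_\lambda, c_k) = 0$ for $k \neq i,j$ (automatic, since $v_\lambda \in P$ and the $c_k$ are $\mathcal{B}$-orthogonal to $\alpha_i,\alpha_j$ hence to $P$) and $\mathcal{B}(v_\lambda, v_\lambda) = 0$ (because $v_\lambda$ is isotropic, being an eigenvector for a non-unit eigenvalue of a $\mathcal{B}$-isometry). I then need $\mathcal{B}(v_\lambda, -) \geq 0$ on all of $\overline{T}_J$: on each chamber $\rho(w)(D)$ this reduces, via equivariance $\mathcal{B}(v_\lambda, \rho(w) x) = \mathcal{B}(\rho(w)^{-1} v_\lambda, x)$, to checking that $\rho(w)^{-1} v_\lambda$ pairs nonnegatively with $D = \mathrm{cone}(c_1,\ldots,c_l)$; since $\rho(w)^{-1}$ scales $v_\lambda$ by a positive factor within the orbit of the accumulation ray and keeps it in the closed cone generated by $\alpha_i,\alpha_j$ inside $P$ (the two-dimensional analysis again), this pairing is $\geq 0$. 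This identifies $\overline{T}_J \cap \ker\ell$ with $C$, giving that $C$ is a face.

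The main obstacle I expect is the careful two-dimensional Lorentzian analysis — proving that in $P$ the chambers under the free product $\Z/2\Z \ast \Z/2\Z$ exhaust exactly the (closed) cone between the two isotropic eigenlines, that they accumulate only at $\R_{\geq 0} v_\lambda$, and that $v_\lambda$ can indeed be chosen inside $\overline{T}_J$ (i.e. on the correct side). This is where the Lorentzian hypothesis is essential: without it the restricted form could be definite (finite dihedral, no boundary ray) or degenerate (parabolic case, $n=1$, handled separately with $v_\lambda = 0$). I would either cite the standard structure theory of rank-two Coxeter/reflection groups on a Lorentzian plane or verify it directly by an explicit $2\times2$ computation with the matrix $\begin{pmatrix} 1 & -b_{ij}/2 \\ -b_{ij}/2 & 1 \end{pmatrix}$, $b_{ij} > 2$, tracking how powers of $\rho(s_js_i)$ move a point of $D \cap P$ toward the $\lambda$-eigenline. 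Everything else — orthogonality of the $c_k$ to $P$, isotropy of $v_\lambda$, the dimension count — is routine.
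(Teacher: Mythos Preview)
Your supporting–hyperplane strategy is different from the paper's approach and would work if the key inequality $\mathcal{B}(v_\lambda,-)\geq 0$ on $\overline{T}_J$ were established, but your justification of that inequality contains a genuine error. You write that ``$W_J$ itself is generated by $\rho(s_i),\rho(s_j)$'' and that therefore $\rho(w)^{-1}v_\lambda$ stays in the plane $P=\mathrm{span}(\alpha_i,\alpha_j)$. This is false in general: in this paper $J=\{k\mid n_k=n\}$ and $W_J$ is generated by \emph{all} $s_k$ with $k\in J$, which is typically more than two generators. For any $k\in J\setminus\{i,j\}$ one computes $\rho(s_k)v_\lambda=v_\lambda+2(ac_{ik}+bc_{jk})\alpha_k\notin P$, so the ``two-dimensional analysis'' does not control the full $W_J$-orbit of $v_\lambda$, and your equivariance reduction collapses precisely at the step where you need it.

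The inequality you want is nonetheless true, but proving it amounts to showing that the $W$-orbit of the isotropic vector $v_\lambda$ remains in $\mathrm{cone}(\alpha_1,\ldots,\alpha_l)$; equivalently, that $v_\lambda$ lies in the closed imaginary cone $\overline{\mathcal{Z}}$, which is dual to $\overline{T}$ (Dyer). This is exactly the limit-root machinery the paper deliberately avoids here. By contrast, the paper's proof is purely dynamical and local: for an arbitrary point $v\in C$ it uses the eigenbasis $\{c_k\}_{k\neq i,j}\cup\{v_\lambda,v_{\lambda^{-1}}\}$ and applies high powers of $\rho(s_js_i)$ to manufacture, in every $\epsilon$-ball around $v$, one point $v_\delta$ in the interior of $\overline{T}_J$ and one point $v_{-\delta}$ outside $\overline{T}_J$ (by starting with a large $\pm b_2v_{\lambda^{-1}}$ component and contracting it via $\lambda^{-n}$). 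This sidesteps any global statement about $W_J$ and uses only the single element $s_js_i$, so it is insensitive to $|J|$.
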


\begin{proof}
Given that $\overline{T}_J$ is invariant under multiplication by elements of $W_J$, it follows from the Perron-Frobenius-Birkhoff theorem \cite{Birkhoff67} that such $v_\lambda$ exists in $\overline{T}_J$.

Let \[ v := b_1 v_\lambda + \sum_{k\neq i,j} a_kc_k \quad ; \qquad a_k \geq 0, b_1\geq 0 \] be an arbitrary vector on $C:= \mathrm{cone}( \{c_k\mid k\neq i,j\} \cup \{v_\lambda\})$. From Proposition \ref{largeEigenvalue}, we have that the set $\{c_k\}_{k\neq i,j}\cup \{v_\lambda,v_{\lambda^{-1}}\}$ is a basis of $V$. Even more, if all the coefficients of a linear combination of these vectors are positive, then the vector is in $\overline{T}_J$.

Fix $\epsilon > 0$, and let $\delta$ be a positive number. Consider $b_2 \gg 0$ such that \[ \sum_{k\neq i,j} (a_k + \delta)c_k + (b_1 + \delta) v_\lambda - b_2 v_{\lambda^{-1}}\] is not in $\overline{T}_J$. Let $n \gg 0$ such that $\lambda^{-n}b_2 < \delta$.

Let \begin{align*}
v_\delta &:= \rho(s_js_i)^n\left( \sum_{k\neq i,j} (a_k + \delta)c_k + \lambda^{-n}(b_1 + \delta) v_\lambda + b_2 v_{\lambda^{-1}} \right)\\ 
&=  \sum_{k\neq i,j} (a_k + \delta)c_k + (b_1 + \delta) v_\lambda + \lambda^{-n}b_2 v_{\lambda^{-1}}
\end{align*} and \begin{align*}
v_{-\delta} &:= \rho(s_js_i)^n\left( \sum_{k\neq i,j} (a_k + \delta)c_k + \lambda^{-n}(b_1 + \delta) v_\lambda - b_2 v_{\lambda^{-1}} \right)\\ 
&=  \sum_{k\neq i,j} (a_k + \delta)c_k + (b_1 + \delta) v_\lambda - \lambda^{-n}b_2 v_{\lambda^{-1}}.
\end{align*} Then for a constant $p$, and $\delta < p\epsilon$, $v_{\delta}$ and $v_{-\delta}$ are in the ball of radius $\epsilon$ centered at $v$, but $v_{\delta}$ is in the interior of $\overline{T}_J$, and $v_{-\delta}$ is not in $\overline{T}_J$, so $v$ is on the boundary of $\overline{T}_J$.
\end{proof}

\begin{proof}[Proof of Theorem \ref{thmMovStructure}.]

Recall that from the proof of Proposition \ref{propTitsConvex} we have that $T_J = T \cap P$, so the $W_J$-orbit of the codimension one faces $\{ \sum_{k\neq i} a_kc_k \mid a_k\geq 0\}$, for $i\not \in J$, are on the boundary of $\overline{T}_J$.

The rest of the boundary corresponds to accumulations of chambers $D_w$ as follows. Let $x\in \partial \overline{T}_J$. Then there exists $\{D_{w_i}\}$ an injective sequence of chambers, with $w_i\in W_J$, such that there exist $x_i \in D_{w_i}$, with $x_i \to x$. We can assume that the $w_i$ corresponds to a reduced word in $W_J$, and even more we can assume, by possibly extending the sequence, that the sequence of chambers is given by adjoint chambers. This means that $w_i = s_{k_1}s_{k_2}\cdots s_{k_i}$. So from each sequence of chambers we obtain an infinite reduced word $\mathbf{w} = s_{k_1}s_{k_2}s_{k_3}\cdots$, and because $W_J$ is a free product of $\Z/2\Z$ this correspondence is unique.

For some $m>0$, let $\{\widehat{v_{k_1}},\ldots, \widehat{v_{k_r}}\}$, where $v_{k_j} = w_m.c_{k_j}$, be the set of vertices fixed by all $w_i$, for $i\geq m$. This set might be empty, and it cannot have more than $l-2$ elements. From Theorem \ref{thmInfiniteWord}, the chambers $D_{w_i}$ converge to the cone $C$ generated by the $\{v_{k_1},\ldots,v_{k_r}\}\cup \{\gamma(\mathbf{w})\}$.

Let $C'$ be one of the cones $\{ a_\lambda v_\lambda + \sum_{k\neq i,j} a_kc_k \mid a_i\geq 0\}$, such that $\{v_{k_1},\ldots,v_{k_r}\}\subseteq \{w_m.c_k\mid k\neq i,j\}$. Then again, by Theorem \ref{thmInfiniteWord}, we have that the sequence of cones $\{w_i.C'\}$ converges to $C$. Therefore $C'$ is on the closure of the $W_J$-orbit of $C$, and because $C$ is on the boundary of $\overline{T_J}$, by Proposition \ref{propConesBoundary}, then so is $C'$.

\end{proof}

The fact that the subgroup $W_J$ is Lorentzian is strongly used in the proof of Theorem \ref{mainThm2}. If $|J| =2$ and $n\geq 2$, or $|J| = 3$, then $W_J$ is Lorentzian. If $|J| = 4$, we cannot conclude this, but using arguments from \cite{Krammer09}, the proof of Theorem \ref{mainThm2} remains true. Also, when $n=1$ this is proven in \cite{CantatOguiso15} for $|J|\geq 3$. If $|J| = 2$ and $n=1$, we have that $\rho(X) \geq 3$, so $W_J$ can be thought as a subgroup of a Lorentzian group, so Theorem \ref{thmInfiniteWord} remains true.  Using the help of a computer, we checked that for $n < 500$ and $|J| < 50$ the subgroup $W_J$ is Lorentzian. It is interesting to see if this property holds for all $n$ and all $|J|$.

\begin{conjecture}
Let $X$ be a general complete intersection Calabi-Yau manifold in $\P(\mathbf{n})$ that satisfies Theorem \ref{mainThm}. Then $W_J$ is Lorentzian.
\label{conjectureLorentzian}
\end{conjecture}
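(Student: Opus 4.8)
The plan is to prove the equivalent statement that the real symmetric matrix $B_J$ has signature $(|J|-1,1)$. In fact the argument below pins down the signature of $B_J$ completely, and shows the Lorentzian property holds for every $n\geq 2$ and, when $n=1$, exactly when $|J|\geq 3$; the single remaining case $n=1,\ |J|=2$ produces the affine form (positive semidefinite of corank one), which is precisely the situation already handled separately in the paragraph preceding this conjecture, so nothing is lost by aiming for the sharp statement.

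The first step is a combinatorial computation of the entries of $B_J$. Fix distinct $i,j\in J$. For $i\in J$ the relation $\sum_{k=1}^{n}a_{ki}=n_i+1=n+1$ with every $a_{ki}\geq 1$ forces a unique index $\tau(i)$ with $a_{\tau(i)\,i}=2$ and all other $a_{ki}=1$. Expanding $X=D_1\cdots D_n$ and reading off the coefficient of $H_iH_j^{\,n-1}$ as $\sum_{k_0=1}^{n}a_{k_0 i}\prod_{k\neq k_0}a_{kj}$ then gives $b_{ij}=2(n+1)-a_{\tau(j)\,i}$, which equals $2n$ if $\tau(i)=\tau(j)$ and $2n+1$ otherwise (this also reproves $b_{ij}=b_{ji}$). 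Organising this via the partition $J=\bigsqcup_{k=1}^{r}J_k$ into nonempty classes $J_k:=\{i\in J\mid\tau(i)=k\}$, with $m_k:=|J_k|\geq 1$, $f_k:=\mathbf{1}_{J_k}\in\R^{J}$ and $e:=\sum_k f_k=\mathbf{1}$, the computation is exactly the identity
\[ B_J=(n+1)I-\Bigl(n+\tfrac12\Bigr)ee^{T}+\tfrac12\sum_{k=1}^{r}f_kf_k^{T}. \]

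The second step is routine linear algebra applied to this identity. The $f_k$ are pairwise orthogonal and span an $r$-dimensional subspace $U$; on $U^{\perp}$ the form of $B_J$ is $(n+1)\|\cdot\|^{2}$, and $U$ and $U^{\perp}$ are $B_J$-orthogonal, so $\mathrm{sig}(B_J)=(|J|-r,0)\oplus\mathrm{sig}(B_J|_U)$. In the basis $\{f_k\}$ the Gram matrix of $B_J|_U$ is $G=D-(n+\tfrac12)ww^{T}$ with $D=\mathrm{diag}\bigl(m_k(n+1+\tfrac12 m_k)\bigr)$ positive definite and $w=(m_1,\dots,m_r)^{T}$ — a positive definite matrix minus a rank-one positive semidefinite one. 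Hence $\mathrm{sig}(G)$ is $(r-1,1)$, $(r-1,0)$ or $(r,0)$ according as $(n+\tfrac12)\,w^{T}D^{-1}w=(2n+1)\sum_{k=1}^{r}\dfrac{m_k}{2n+2+m_k}$ is $>1$, $=1$ or $<1$; so $B_J$ is Lorentzian exactly when this scalar exceeds $1$.

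The last step is that inequality. If $r\geq 2$, monotonicity of $t\mapsto t/(2n+2+t)$ together with $m_k\geq 1$ bounds the sum below by $r/(2n+3)\geq 2/(2n+3)$, and $2(2n+1)>2n+3$ for all $n\geq 1$, so it holds automatically. If $r=1$, then $m_1=|J|$ and the inequality reads $(2n+1)|J|>2n+2+|J|$, i.e.\ $|J|>1+1/n$, which is valid for $n\geq 2$ with any $|J|\geq 2$ and for $n=1$ precisely when $|J|\geq 3$. Combining, $\mathrm{sig}(B_J)=(|J|-r,0)\oplus(r-1,1)=(|J|-1,1)$ in all the asserted cases. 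The part I expect to be the real obstacle is the first step: recognising that the intersection numbers $b_{ij}$ take only the two values $2n$ and $2n+1$, governed by whether the degree-two coordinates of $i$ and $j$ lie in the same defining divisor, and then spotting that this turns $B_J$ into the rank-$(\leq r)$ perturbation of $(n+1)I$ displayed above. Once that structural identity is in hand, the signature bookkeeping and the one-variable inequality are mechanical, and the clean dichotomy between the $r\geq 2$ and $r=1$ cases is exactly what explains why the property is as robust as the computer search indicated.
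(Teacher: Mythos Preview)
Your argument is correct, and it goes well beyond what the paper does: in the paper this statement is left as an open conjecture, supported only by computer verification for $n<500$ and $|J|<50$ and by the equivalent reformulation in Proposition~\ref{propEquivalenceConjecture}. You have supplied an actual proof.

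Your first step --- the computation $b_{ij}=2(n+1)-a_{\tau(j)\,i}\in\{2n,2n+1\}$ governed by whether $\tau(i)=\tau(j)$ --- is exactly the content of Proposition~\ref{propEquivalenceConjecture}, which the paper states without proof; your derivation is clean and matches the paper's Example~3.5 (where $\tau(2)\neq\tau(3)$ gives $b_{23}=2n+1=7$). What is genuinely new is your second step: the identity
\[
B_J=(n+1)I-\bigl(n+\tfrac12\bigr)ee^{T}+\tfrac12\sum_{k}f_kf_k^{T}
\]
reduces the signature question to a rank-one perturbation of a positive definite matrix on the $r$-dimensional subspace $U=\mathrm{span}(f_k)$, and the Sylvester/Schur-type criterion $(n+\tfrac12)\,w^{T}D^{-1}w>1$ then becomes the elementary inequality $(2n+1)\sum_k m_k/(2n+2+m_k)>1$. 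The case split ($r\geq 2$ versus $r=1$) and the resulting bound $|J|>1+1/n$ are checked correctly, and they recover precisely the known exceptional affine case $n=1$, $|J|=2$ discussed in the paragraph preceding the conjecture.

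One small caveat worth stating explicitly: your conclusion assumes $|J|\geq 2$. This is the intended range (the whole discussion of the Lorentzian property, Theorem~\ref{mainThm2}, and Proposition~\ref{propEquivalenceConjecture} presupposes at least two involutions), but the conjecture as literally written does not exclude $|J|=1$, where $B_J=[1]$ has signature $(1,0)$ and the claim is vacuously irrelevant rather than true. With that understood, you have resolved the conjecture.
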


Proving Conjecture \ref{conjectureLorentzian} is equivalent to proving the following.

\begin{proposition}
Let $n\geq 2$ and let $0 \leq r_1\leq r_2\leq \ldots \leq r_n$ be a partition of $|J|$, \emph{i.e.}, $\sum r_i = |J|$. Let $A_r$ be a $r\times r$ matrix given by \[ (A_r)_{ij} = \begin{cases} 1 & \text{if } i=j \\ -n & \text{if } i\neq j \end{cases}. \] Let $B_J$ be the matrix formed by the blocks $A_{r_k}$ on the diagonal, and outside of it the entries of the matrix only have the value $-(2n+1)/2$. Then Conjecture \ref{conjectureLorentzian} is true if and only if the signature of $B_J$ is $(|J|-1,1)$ for all $n$, for all $|J|$, and for all partitions $0 \leq r_1\leq r_2\leq \ldots \leq r_n$ of $|J|$.
\label{propEquivalenceConjecture}
\end{proposition}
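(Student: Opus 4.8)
The plan is to pin down the matrix $B_J$ completely. I claim that for every admissible $X$ with $\codim X=n\ge 2$ the matrix $B_J$ of Section~\ref{section:ComputingBir} is \emph{exactly} one of the block matrices in the statement, and conversely that every such block matrix arises from an admissible $X$. Since ``$W_J$ is Lorentzian'' means by definition ``$B_J$ has signature $(|J|-1,1)$'', the equivalence then drops out (the case $n=1$ lies outside the scope of the Proposition and is handled separately in the discussion above, as is the trivial case $|J|=1$).

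First I would compute the off-diagonal entries. By definition $(B_J)_{j_1j_2}=-b_{j_1j_2}/2$, where $b_{j_1j_2}$ is the coefficient of $H_{j_1}H_{j_2}^{\,n-1}$ in the cycle $(D_1\cdots D_n)$ on $\P(\mathbf{n})$; this is an intersection number depending only on the multidegrees, and only the $H_{j_1},H_{j_2}$-parts of the $D_i$ contribute, so it equals the coefficient of $xy^{n-1}$ in $\prod_{i=1}^{n}(a_{ij_1}x+a_{ij_2}y)$. For $j\in J$ one has $n_j=n=\codim X$, so as in Proposition~\ref{propMatrices} exactly one of $a_{1j},\dots,a_{nj}$ equals $2$ and the rest equal $1$; write $\phi(j)$ for the index of that divisor. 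If $\phi(j_1)=\phi(j_2)$ the product is $2(x+y)^n$, giving $b_{j_1j_2}=2n$; if $\phi(j_1)\neq\phi(j_2)$ it is $(2x+y)(x+2y)(x+y)^{n-2}$, giving $b_{j_1j_2}=2n+1$. Hence, ordering $J$ so that the fibres of $\phi$ are consecutive, $B_J$ is precisely the matrix with the blocks $A_{r_k}$ ($r_k=|\phi^{-1}(k)|$, so $0\le r_1\le\cdots\le r_n$ and $\sum r_k=|J|$) along the diagonal and entry $-(2n+1)/2$ off the diagonal blocks. In particular its signature depends only on $n$, $|J|$ and the partition $(r_k)$, which already gives ``signature condition $\Rightarrow$ Conjecture'': for any admissible $X$, $B_J$ is one of these matrices, hence Lorentzian by hypothesis.

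For the converse I would realise each triple $(n,|J|,(r_k))$, with $n\ge 2$ and $s:=|J|\ge 2$, by an explicit $X$. Take $l=s$ factors $\P^n$ and, for $j$ in the $k$-th block, set $a_{ij}=2$ if $i=k$ and $a_{ij}=1$ otherwise; for $D_1,\dots,D_n$ general of these multidegrees, Bertini gives a smooth complete intersection Calabi--Yau with $J=\{1,\dots,l\}$ and $B_J$ as prescribed. The only configuration this misses is $(n,s)=(2,2)$, which is barred by Theorem~\ref{mainThm}; there I would instead append one extra factor $\P^{n'}$ with $n'>2$, keep $J=\{1,2\}$, and distribute the degree on the new coordinate so that $\sum_i a_{i,l+1}=n'+1$. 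In every case $\sum n_i\ge 4$, the exclusion $l=2\Rightarrow(n_1,n_2)\neq(2,2)$ holds, and $m=n=\codim X\le\min n_i$, so Theorem~\ref{mainThm} applies; combined with the previous paragraph this gives the equivalence.

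The conceptual heart is the short generating-function computation isolating the two values $b_{j_1j_2}\in\{2n,\,2n+1\}$ — everything else is formal. The genuinely fussy point, and the one I expect to be the main obstacle, is the realisation step: one must check that \emph{every} partition, including those with some $r_k=0$ (a divisor carrying no degree-$2$ factor among the $J$-coordinates), is realised by a product of projective spaces that avoids both degenerate cases excluded in Theorem~\ref{mainThm}, which is exactly what forces the small detour through the auxiliary factor $\P^{n'}$.
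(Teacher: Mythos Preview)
The paper states Proposition~\ref{propEquivalenceConjecture} without proof, treating it as a direct reformulation of Conjecture~\ref{conjectureLorentzian}; your proposal supplies precisely the argument the paper leaves implicit, and it is correct. The generating-function computation showing $b_{j_1j_2}=2n$ when the degree-$2$ entries of columns $j_1,j_2$ lie in the same row (your $\phi(j_1)=\phi(j_2)$) and $b_{j_1j_2}=2n+1$ otherwise is right, the block structure of $B_J$ follows after reordering, and your realisation of every partition by an explicit $X$ --- including the auxiliary $\P^{n'}$ factor to bypass the excluded $(\P^2)^2$ configuration --- covers all cases within the scope $n\ge 2$, $|J|\ge 2$ of the proposition.
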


\begin{example}
Let $X$ be the intersection of general divisors of multidegree $(3,1,2,1,1,2,1)$, $(2,2,1,2,2,1,1)$ and $(2,3,1,1,1,1,2)$ in $\P^6 \times \P^5 \times \P^3 \times \P^3 \times \P^3 \times \P^3 \times \P^3$. Then the matrix $B_J$ is \[ B_J = \begin{pmatrix}
1 & -7/2 & -7/2 & -3 & -7/2 \\ 
-7/2 & 1 & -3 & -7/2 & -7/2 \\
-7/2 & -3 & 1 & -7/2 & -7/2 \\
-3 & -7/2 & -7/2 & 1 & -7/2 \\
-7/2 & -7/2 & -7/2 & -7/2 & 1
\end{pmatrix} \]
By swapping rows and columns 1 and 5 we get that $B_J$ is similar to the matrix \[ \begin{pmatrix}
1 & -7/2 & -7/2 & -7/2 & -7/2 \\ 
-7/2 & 1 & -3 & -7/2 & -7/2 \\
-7/2 & -3 & 1 & -7/2 & -7/2 \\
-7/2 & -7/2 & -7/2 & 1 & -3 \\
-7/2 & -7/2 & -7/2 & -3 & 1
\end{pmatrix}, \] which is the matrix obtained in Proposition \ref{propEquivalenceConjecture} with $r_1 = 1$ and $r_2 = r_3 = 2$. In this case, the eigenvalues of $B_J$ are \[ \mathrm{Eigenvalues} = \{ -4 - \sqrt{74}, 5 ,-4 + \sqrt{74}, 4 ,4 \} \] and so the signature is $(4,1)$, which implies that $W_J$ is Lorentzian.
\end{example}
~\\
Using Proposition \ref{propEquivalenceConjecture}, it is easy to check that Conjecture \ref{conjectureLorentzian} is true when $r_n = |J|$ and when $|J| = n$ and all $r_i = 1$.

\section{Application: Numerical dimension of divisors} \label{section:NumericalDimension}

In \cite{Lesieutre19} and \cite{Mccleerey19} it is proven that the different notions of numerical dimension of a divisor might not coincide. In particular, two new numerical dimensions are introduced for a divisor $D$: $\kappa_\sigma^\R(D)$ and $\nu_{\text{vol}}^\R(D)$. We focus on the latter.

\begin{definition}
Suppose that $X$ is a projective smooth variety and $D$ a pseudoeffective divisor class on $X$. Fix an ample divisor $A$. We define $\nu^\R_{\text{vol}}(D)$ as the largest real number $k$ for which there is a constant $C$ such that \[ Ct^{\dim X - k}< \mathrm{vol}(tD+A) \] for all $t > 0$.
\end{definition}

From \cite{Lesieutre19} we have a formula to compute $\nu^\R_{\text{vol}}(D)$.

\begin{proposition}[\cite{Lesieutre19}, Lemma 8]
Suppose that $\phi\colon X \dashrightarrow X$ is a pseudoautomorphism such that the spectral radius of $\phi^*$ is greater than 1. Let $\lambda_1$ be the spectral radius of $\phi^*$ and $\mu_1$ the spectral radius of ${\phi^{-1}}^*$. Suppose that there exist a $\lambda_1$-eigenvector $\Delta_+$ for $\phi^*$ and a $\mu_1$-eigenvector $\Delta_-$ for ${\phi^{-1}}^*$ with the property that $A = \Delta_+ + \Delta_-$ is ample. Then
\[ \nu_{\text{vol}}^\R(\Delta_+) = (\dim X)\left( 1 + \dfrac{\log \mu_1}{\log \lambda_1}\right)^{-1} \]
\label{volLes}
\end{proposition}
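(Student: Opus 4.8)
Write $N=\dim X$. The plan is to play off two properties of the volume function against each other: (i) it is invariant under pseudo-automorphisms --- a pseudo-automorphism $\psi$, being an isomorphism in codimension one, gives $H^0(X,mL)\simeq H^0(X,m\psi^*L)$ for all $m$ and all classes $L$, so $\mathrm{vol}(\psi^*L)=\mathrm{vol}(L)$; and (ii) it is continuous on $N^1(X)_\R$, homogeneous of degree $N$, and strictly positive exactly on the big cone. Two preliminary observations: $\Delta_+$ may be taken pseudoeffective (it is a Perron eigenvector of $\phi^*$, so Perron--Frobenius--Birkhoff supplies a $\lambda_1$-eigenvector of $\phi^*$ lying in $\overline{\Eff}(X)$), and $\mu_1\geq 1$ since $(\phi^{-1})^*$ is an invertible integral operator and hence has an eigenvalue of modulus at least $1$; as $\lambda_1>1$ this gives $\lambda_1\mu_1>1$.

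The main step reduces the large-$t$ behaviour of $f(t):=\mathrm{vol}(t\Delta_++A)=\mathrm{vol}\big((t+1)\Delta_++\Delta_-\big)$ to a computation over a fixed compact set. From $\phi^*\Delta_+=\lambda_1\Delta_+$ and $(\phi^{-1})^*=(\phi^*)^{-1}$ one gets $(\phi^{-p})^*\Delta_+=\lambda_1^{-p}\Delta_+$ and $(\phi^{-p})^*\Delta_-=\mu_1^{p}\Delta_-$ for every integer $p\geq 1$; applying the pseudo-automorphism $\phi^{-p}$ and then extracting the scalar $\mu_1^{p}$ by homogeneity gives
\[
f(t)\;=\;\mathrm{vol}\big((t+1)\lambda_1^{-p}\Delta_++\mu_1^{p}\Delta_-\big)\;=\;\mu_1^{\,pN}\,\mathrm{vol}\big((t+1)(\lambda_1\mu_1)^{-p}\Delta_++\Delta_-\big).
\]
For $t$ large I would take $p=p(t):=\lfloor\log_{\lambda_1\mu_1}(t+1)\rfloor$, so that $s(t):=(t+1)(\lambda_1\mu_1)^{-p(t)}$ stays in the fixed compact interval $[1,\lambda_1\mu_1]$. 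On that interval the class $s\Delta_++\Delta_-=(s-1)\Delta_++A$ is big, being a sum of a pseudoeffective and an ample class, so $s\mapsto\mathrm{vol}(s\Delta_++\Delta_-)$ is continuous and bounded between positive constants there; therefore $f(t)=\mu_1^{\,p(t)N}\cdot\Theta(1)$, i.e.\ $f$ grows like a fixed power of $t$ as $t\to\infty$.

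Finally, because $p(t)=\log(t+1)/\log(\lambda_1\mu_1)+O(1)$, collecting the logarithms in $\mu_1^{\,p(t)N}$ shows that $f(t)$ grows like the power of $t$ recorded in the statement, and that power is exactly $\nu_{\text{vol}}^\R(\Delta_+)$: on any bounded $t$-range $f$ is continuous and $\geq\mathrm{vol}(A)>0$, so such a range imposes no constraint beyond the asymptotic one, and the resulting exponent is independent of the ample class used, so computing with $A=\Delta_++\Delta_-$ is legitimate. I expect the genuinely delicate points to be the \emph{exact} $\phi$-invariance of $\mathrm{vol}$ --- this is where one really needs $\phi$ to be an isomorphism in codimension one, not merely birational --- and the verification that the reference slice $\{\,s\Delta_++\Delta_-:1\leq s\leq\lambda_1\mu_1\,\}$ consists of big classes, which rests on the ampleness of $A$ together with the pseudoeffectivity of $\Delta_+$; everything else is bookkeeping with logarithms.
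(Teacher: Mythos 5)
This proposition is quoted from \cite{Lesieutre19} and the paper supplies no proof of it, so there is no internal argument to compare against; your strategy --- invariance of $\mathrm{vol}$ under pseudo-automorphisms, degree-$N$ homogeneity, and rescaling by $(\phi^{-p})^*$ so as to land on the compact reference slice $\{s\Delta_++\Delta_-: 1\le s\le \lambda_1\mu_1\}$ of big classes --- is the standard one and is essentially Lesieutre's. The asymptotic you derive is correct: $\mathrm{vol}(t\Delta_++A)=\mu_1^{p(t)N}\cdot\Theta(1)\asymp t^{\,N\log\mu_1/(\log\lambda_1+\log\mu_1)}$, and your treatment of the two delicate points (exact $\phi$-invariance of the volume, bigness of the reference slice via pseudoeffectivity of $\Delta_+$) is adequate.

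The gap is in the very last step. The exponent you computed is $N\log\mu_1/(\log\lambda_1+\log\mu_1)$, whereas the value recorded in the statement is $N\bigl(1+\log\mu_1/\log\lambda_1\bigr)^{-1}=N\log\lambda_1/(\log\lambda_1+\log\mu_1)$. These two numbers sum to $N$ and coincide only when $\lambda_1=\mu_1$, so your claim that ``$f(t)$ grows like the power of $t$ recorded in the statement'' is false in general. The resolution lies in the definition given in Section \ref{section:NumericalDimension}: the defining inequality compares $\mathrm{vol}(tD+A)$ with $t^{\dim X-k}$, not with $t^{k}$, so $\nu^\R_{\text{vol}}(D)$ is $\dim X$ minus the growth exponent of $\mathrm{vol}(tD+A)$ rather than the growth exponent itself. (Sanity check: if $\mu_1=1$ then your formula shows $\mathrm{vol}(t\Delta_++A)$ is bounded, i.e.\ $\Delta_+$ behaves like a numerically trivial class; the statement's formula must therefore return a value reflecting a \emph{complementary} normalization, since the growth exponent is $0$ while $N(1+\log\mu_1/\log\lambda_1)^{-1}=N$.) Once you route your asymptotic through the actual definition you get $\nu^\R_{\text{vol}}(\Delta_+)=N-N\log\mu_1/(\log\lambda_1+\log\mu_1)=N(1+\log\mu_1/\log\lambda_1)^{-1}$, which is the stated formula. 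As written, you assert both that the growth exponent equals $\nu^\R_{\text{vol}}(\Delta_+)$ and that it equals the displayed expression; at most one of these can hold unless $\lambda_1=\mu_1$ (which, by Proposition \ref{volEigen}, is the only case this paper actually uses).
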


For $X$ as in Section \ref{section:MovableCone}, because of Theorem \ref{mainThm} (3), we just need $A = \Delta_+ + \Delta_-$ to be big. From the proof of Proposition \ref{propTitsConvex} and Theorem \ref{thmInfiniteWord}, if $W_J$ is Lorentzian, then each $g\in \Bir(X)$ with spectral radius greater than 1 satisfies the hypothesis of Proposition \ref{volLes}. Even more, we have the following proposition regarding the eigenvalues of $g^*$.

\begin{proposition}
Suppose that $W_J$ is Lorentzian. If $g\in \Bir(X)$ has spectral radius $\lambda>1$, then the eigenvalues of $g^*$ are $\lambda,\lambda^{-1}$ and the rest of the eigenvalues have modulus equal to 1.
\label{volEigen}
\end{proposition}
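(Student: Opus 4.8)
The plan is to leverage the identification of $\Bir(X)$ with $W_J$ inside the geometric representation $\rho$, together with the Lorentzian hypothesis on $B_J$. By Theorem \ref{thmMapping}(1), the matrix of $g^*$ (on $N^1(X)_\R$) is conjugate to $\rho(w)$ for the corresponding $w\in W_J$, so it suffices to analyze the eigenvalues of $\rho(w)$ acting on $V$. The first step is to reduce to the subspace $\mathrm{span}(\Delta_J)$: since $\rho(s_j)$ fixes each $\alpha_k$ with $k\notin J$ (equivalently, on the dual side $\rho(s_j)c_k = c_k$ for $k\notin J$), the operator $\rho(w)$ fixes pointwise a complement of $\mathrm{span}(\Delta_J)$, contributing only eigenvalue $1$. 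So all the ``interesting'' spectrum of $\rho(w)$ lies in its restriction to $\mathrm{span}(\Delta_J)$, a space of dimension $|J|$ carrying the bilinear form $\mathcal{B}_J$ with signature $(|J|-1,1)$.

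Next I would invoke the structure theory of Lorentzian (hyperbolic) Coxeter groups. The restriction of $\rho(w)$ to $\mathrm{span}(\Delta_J)$ preserves the form $\mathcal{B}_J$ of signature $(|J|-1,1)$, hence lies in $O(|J|-1,1)$. An element of $O(|J|-1,1)$ has eigenvalues that come in pairs $\{\mu,\mu^{-1}\}$ (reciprocal, counting the complex-modulus structure), and a standard classification of isometries of hyperbolic space into elliptic, parabolic, and loxodromic types shows that such an element either has all eigenvalues of modulus $1$, or has exactly one eigenvalue $\lambda>1$ and one eigenvalue $\lambda^{-1}<1$ with all remaining eigenvalues of modulus $1$ (the loxodromic case). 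The hypothesis that $g$ has spectral radius $\lambda>1$ forces the loxodromic case; thus the eigenvalues are exactly $\lambda,\lambda^{-1}$ together with $|J|-2$ eigenvalues of modulus $1$ on $\mathrm{span}(\Delta_J)$, plus the eigenvalues $1$ coming from the fixed complement. Combining, $g^*$ has eigenvalues $\lambda,\lambda^{-1}$ and all others of modulus $1$.

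Alternatively — and this is perhaps cleaner given what is already available — I would argue via Theorem \ref{thmInfiniteWord} and the proof of Proposition \ref{propTitsConvex}: an element $g$ of infinite order with $\lambda>1$ corresponds to a word whose powers $w^i$ generate injective sequences of chambers converging to two distinct boundary directions $\widehat\gamma(\mathbf{w})$ and $\widehat\gamma(\mathbf{w}^{-1})$ in $\partial\overline{T_J}$; these are the attracting and repelling fixed directions of $\rho(w)$, realized by the eigenvectors for $\lambda$ and $\lambda^{-1}$ (as in Proposition \ref{propConesBoundary}). The complementary invariant subspace — spanned by the fixed $c_k$'s and, inside $\mathrm{span}(\Delta_J)$, the $\mathcal{B}_J$-orthogonal complement of the plane $\langle v_\lambda, v_{\lambda^{-1}}\rangle$ — carries a positive-definite restriction of $\mathcal{B}_J$, so $\rho(w)$ acts there as an isometry of a Euclidean space, forcing all its eigenvalues there to have modulus $1$. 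The main obstacle is making precise that the orthogonal complement of the hyperbolic plane $\langle v_\lambda,v_{\lambda^{-1}}\rangle$ inside the Lorentzian space is positive definite — this uses that $v_\lambda,v_{\lambda^{-1}}$ are isotropic with $\mathcal{B}_J(v_\lambda,v_{\lambda^{-1}})\neq 0$, so they span a nondegenerate plane of signature $(1,1)$, whence its complement has signature $(|J|-2,0)$ — and then checking that $\rho(w)$ genuinely preserves this splitting, which follows since $v_\lambda,v_{\lambda^{-1}}$ are eigenvectors and $\rho(w)$ is a $\mathcal{B}_J$-isometry.
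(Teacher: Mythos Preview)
Your approach is essentially the paper's: split off the eigenvalue $1$ coming from the fixed classes $h_k$ (or $\alpha_k$) with $k\notin J$, observe that the remaining $|J|\times|J|$ block is an isometry of the Lorentzian form $\mathcal{B}_J$, and then invoke the spectral classification of $O(|J|-1,1)$ (the paper cites Riesz for this; your loxodromic/elliptic/parabolic trichotomy and your alternative hyperbolic-plane-plus-definite-complement argument are exactly the content of that citation). One technical slip to fix: $\mathrm{span}(\Delta_J)$ is \emph{not} $\rho(w)$-invariant (since $\rho(s_j)\alpha_j$ has components along $\alpha_i$ for $i\notin J$), so you cannot literally ``restrict $\rho(w)$ to $\mathrm{span}(\Delta_J)$''; the correct object is the induced action on the quotient $V/\mathrm{span}\{\alpha_k:k\notin J\}$, whose matrix is the minor obtained by deleting the rows and columns with index not in $J$ --- and that minor coincides with the geometric representation $\rho_J(w)$ of $W_J$ on its own $|J|$-dimensional space, which is where the form $\mathcal{B}_J$ genuinely lives and is preserved.
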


\begin{proof}
The existence of an eigenvalue equal to $\lambda$ is again a consequence of the Perron-Frobenius-Birkhoff theorem \cite{Birkhoff67}. Because $g^*$ fixes the divisor classes $h_i$, with $i\not \in J$, then $g^*$ has $l-|J|$ eigenvalues equal to 1. On the other hand, the rest of the eigenvalues are equal to the eigenvalues of the minor obtained by deleting the rows and columns with index not in $J$. The obtained matrix corresponds to an isometry of a Lorentz space, so \cite[Section 3.7]{Riesz58} implies the rest of the proposition.
\end{proof}

\begin{corollary}
For $m\geq 3$, let \[ \mathfrak{V}_m = \{ \nu^\R_{\text{vol}}(D)\mid \dim X = m,\ D \text{ is a pseudoeffective divisor class on }X\}.\] Then $(\dim X)/2 \in \mathfrak{V}_m$.
\end{corollary}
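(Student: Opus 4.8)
The plan is to reduce the statement to producing, for each $m \geq 3$, a single variety $X$ of dimension $m$ within our family together with a pseudoeffective class $D$ on it satisfying $\nu^\R_{\text{vol}}(D) = m/2$; since $\mathfrak{V}_m$ collects all such values over all admissible $X$, this suffices.

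First I would take $X \subset \P^m\times\P^m$ to be a general complete intersection of $m$ ample divisors $D_1,\dots,D_m$ with $\sum_i D_i = -K_{\P^m\times\P^m} = (m+1)H_1+(m+1)H_2$ (for instance $D_1 = 2H_1+2H_2$ and $D_2 = \cdots = D_m = H_1+H_2$). Here $l = 2$, $\sum n_i = 2m \geq 6$, $(n_1,n_2) = (m,m) \neq (2,2)$, and $\codim X = m = \min\{n_i\}$, so $X$ lies in the scope of Theorem \ref{mainThm}; moreover $\dim X = 2m-m = m$, $n = m \geq 2$, and $J = \{1,2\}$, so that $W_J$ is Lorentzian. (When $m = 3$ this is Oguiso's example, and the argument recovers Lesieutre's class of numerical dimension $3/2$.)

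Next I would take $g := \iota_1\iota_2 \in \Bir(X)$, which is a pseudoautomorphism by Theorem \ref{thmKFlops}. By Proposition \ref{largeEigenvalue} the induced action $g^*$ on the two-dimensional space $N^1(X)_\R$ has an eigenvalue $\lambda > 1$, and then by Proposition \ref{volEigen} its eigenvalues are exactly $\lambda$ and $\lambda^{-1}$; in particular $(g^{-1})^* = (g^*)^{-1}$ also has spectral radius $\lambda$. Because $W_J$ is Lorentzian, the discussion preceding Proposition \ref{volEigen} (resting on the proof of Proposition \ref{propTitsConvex} and on Theorem \ref{thmInfiniteWord}) guarantees that $g$ satisfies the hypotheses of Proposition \ref{volLes} in the big-divisor form valid here: there exist eigenvectors $\Delta_+$ (for $\lambda$ under $g^*$) and $\Delta_-$ (for $\lambda$ under $(g^{-1})^*$) lying in $\overline{{\Mov\,}^e(X)}$ with $\Delta_+ + \Delta_-$ big, the big version sufficing since $\Nef(X)$ is rational polyhedral and a fundamental domain by Theorem \ref{mainThm}(c).

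Finally, applying Proposition \ref{volLes} with $\phi = g$ and $\lambda_1 = \mu_1 = \lambda$ gives
\[ \nu^\R_{\text{vol}}(\Delta_+) = (\dim X)\left(1 + \frac{\log\mu_1}{\log\lambda_1}\right)^{-1} = m\cdot\frac{1}{2} = \frac{m}{2}, \]
and $\Delta_+ \in \overline{{\Mov\,}^e(X)} \subseteq \overline{\Eff(X)}$ is pseudoeffective, so $m/2 \in \mathfrak{V}_m$. The only step requiring genuine care is the verification that $g$ meets the hypotheses of Proposition \ref{volLes} with $\Delta_+ + \Delta_-$ merely big rather than ample — but this is exactly what is recorded in the paragraph before Proposition \ref{volEigen}, so the remaining work is just the routine bookkeeping of checking that the chosen $X \subset \P^m\times\P^m$ falls within the range of Theorem \ref{mainThm}.
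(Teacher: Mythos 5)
Your proposal is correct, but it takes a genuinely different route from the paper. The paper proves this corollary by specializing to the case $n=1$, i.e.\ the Wehler-type hypersurfaces of multidegree $(2,\ldots,2)$ in $(\P^1)^{m+1}$: there $|J|=m+1$, the Lorentzian property of $W_J$ is quoted from \cite[Section 2.2.2]{CantatOguiso15}, and the element with spectral radius $>1$ satisfying the hypotheses of Proposition \ref{volLes} is taken to be the composition of \emph{all} the involutions, which requires the input from \cite{Krammer09}. You instead specialize to the opposite extreme $l=2$, $n=m$, taking a complete intersection of $m$ ample divisors in $\P^m\times\P^m$ (Oguiso's example when $m=3$), where $|J|=2$ and $\rho(X)=2$. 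This buys several simplifications: the Lorentzian property of $W_J$ is an immediate $2\times 2$ determinant check (it is exactly the case ``$|J|=2$ and $n\geq 2$'' noted in the paper after Theorem \ref{thmMovStructure}); the element $g=\iota_1\iota_2$ has spectral radius $\lambda>1$ directly by Proposition \ref{largeEigenvalue}, with no appeal to \cite{Krammer09}; and $\lambda_1=\mu_1$ follows from Proposition \ref{volEigen} (or even from the explicit characteristic polynomial $x^2-(b_{12}^2-2)x+1$). The verification that $\Delta_++\Delta_-$ is big is also transparent in Picard rank $2$, since $\Delta_+$ and $\Delta_-$ span the two distinct boundary rays of the full-dimensional cone $\overline{{\Mov\,}^e(X)}$, so their sum lies in its interior and hence in the big cone; in any case this is exactly the assertion recorded in the paragraph preceding Proposition \ref{volEigen}, as you note. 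The trade-off is that your argument hinges on the existence of the rank-two examples in $\P^m\times\P^m$ for every $m\geq 3$ (which does hold: $\sum n_i=2m\geq 6$, $(m,m)\neq(2,2)$, and $\codim X=m=\min\{n_i\}$), whereas the paper's choice emphasizes the $n=1$ family that motivated \cite{CantatOguiso15}. Both arguments are complete and yield the same conclusion.
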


\begin{proof}
From \cite[Section 2.2.2]{CantatOguiso15} we have that if $n=1$, $W_J$ is Lorentzian. Also, by \cite{Krammer09}, we have that the composition of all the involutions of $X$ satisfies the hypothesis of Proposition \ref{volLes}. By  Proposition \ref{volEigen}, $\lambda_1 = \mu_1$, so \[ \nu_{\text{vol}}^\R(\Delta_+) = (\dim X)\left( 1 + 1\right)^{-1} = \frac{\dim X}{2}. \]
\end{proof}

\begin{corollary}
If $|J| \geq 3$, then there are infinitely many divisor classes $D$ on $X$ with $\nu_{\text{vol}}^\R(D) = (\dim X)/2$.
\end{corollary}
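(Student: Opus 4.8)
The plan is to exhibit the infinitely many classes as a single $\Bir(X)$-orbit of the eigenvector produced by the previous corollary. Since $|J|\geq 3$, the group $W_J\cong\underbrace{\Z/2\Z\ast\cdots\ast\Z/2\Z}_{|J|}$ is non-elementary --- it contains a rank-two free subgroup, e.g.\ $\langle\iota_{j_1}\iota_{j_2},\iota_{j_1}\iota_{j_3}\rangle$ for distinct $j_1,j_2,j_3\in J$ --- and it is Lorentzian (automatic for $|J|=3$; for $|J|\geq 4$ the dynamical input used below is still available by the arguments of \cite{Krammer09}, and in any case follows from Conjecture \ref{conjectureLorentzian}). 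First I would fix one element $g_0\in\Bir(X)$ whose co-action has spectral radius $\lambda>1$; verifying that such a $g_0$ exists is one of the two delicate points, and I expect to split into cases: for $n\geq 2$ one can take $g_0=\iota_i\iota_j$ with $i,j\in J$, loxodromic by Proposition \ref{largeEigenvalue}, whereas for $n=1$ that product is parabolic and one must instead take a product $g_0=\iota_{j_1}\iota_{j_2}\iota_{j_3}$ of three distinct involutions, loxodromic by the geometry of the ideal-simplex reflection group (cf.\ \cite{CantatOguiso15}). Let $\Delta_+$ be a $\lambda$-eigenvector of $g_0^*$; by the Perron--Frobenius--Birkhoff theorem \cite{Birkhoff67}, invoked exactly as in Proposition \ref{propConesBoundary}, it may be chosen in $\Mov(X)$, and similarly a $\lambda^{-1}$-eigenvector $\Delta_-$ of $(g_0^{-1})^*$ in $\Mov(X)$.

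Next I would observe that every conjugate $g_h:=h\,g_0\,h^{-1}$, $h\in W_J$, again has spectral radius $\lambda>1$, with $\lambda$- and $\lambda^{-1}$-eigenvectors $h^*\Delta_+$ and $h^*\Delta_-$; these lie in $\Mov(X)$ since $\Bir(X)$ preserves $\Mov(X)$, and their sum $h^*(\Delta_++\Delta_-)$ remains big because pseudo-automorphisms preserve the big cone. Hence, as recalled before Proposition \ref{volLes}, each $g_h$ satisfies the hypotheses of that proposition, and by Proposition \ref{volEigen} the spectral radii of $g_h^*$ and of $(g_h^{-1})^*$ coincide (both equal $\lambda$), so Proposition \ref{volLes} yields $\nu^\R_{\mathrm{vol}}(h^*\Delta_+)=(\dim X)(1+1)^{-1}=(\dim X)/2$ for every $h\in W_J$. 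This is precisely the computation of the preceding corollary, carried out along the whole orbit rather than for a single element.

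Finally I would show the orbit is infinite: $h_1^*\Delta_+$ and $h_2^*\Delta_+$ are proportional if and only if $h_2^{-1}h_1$ fixes the ray $\R_{\geq 0}\Delta_+$, equivalently fixes the corresponding point of the boundary sphere on which $W_J$ acts, and the stabilizer of such a point is an elementary subgroup of the non-elementary group $W_J$, hence of infinite index. Therefore $\{\,\R_{\geq 0}h^*\Delta_+ : h\in W_J\,\}$ is an infinite family of pairwise distinct pseudoeffective rays, each carrying $\nu^\R_{\mathrm{vol}}=(\dim X)/2$, which proves the corollary. The main obstacle, as flagged above, is the first paragraph: pinning down a genuinely loxodromic $g_0$ (in particular the separate $n=1$ case) and confirming that the required eigenvectors actually lie in $\Mov(X)$; the infinite-index statement in the last step is the clean payoff of the hypothesis $|J|\geq 3$, since for $|J|=2$ the group $W_J$ is infinite dihedral and the analogous orbit would consist of only finitely many rays.
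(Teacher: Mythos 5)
Your argument is correct and follows essentially the same route as the paper: the paper likewise produces a single element of $W_J$ with spectral radius $\lambda>1$ (citing the proof of Chen--Labb\'e's Theorem 3.10 rather than exhibiting $\iota_i\iota_j$, resp.\ a triple product when $n=1$, explicitly), applies Propositions \ref{volLes} and \ref{volEigen} to get one class with $\nu^\R_{\mathrm{vol}}=(\dim X)/2$, and then takes its $\Bir(X)$-orbit, so your stabilizer computation merely makes explicit the infinitude of that orbit, which the paper asserts without comment. Your concern about the Lorentzian hypothesis when $|J|\geq 4$ is dealt with in the paper by restricting to the subgroup generated by three of the involutions, which is always Lorentzian, and running the whole argument there; alternatively, for your specific elements $h\iota_i\iota_jh^{-1}$ the equality $\lambda_1=\mu_1$ already follows from Proposition \ref{largeEigenvalue} since conjugation preserves the characteristic polynomial.
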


\begin{proof}
If we restrict to a subgroup of $W_J$ generated by 3 generators it is Lorentzian, and the set of elements $w$ with eigenvalue $\lambda > 1$ is not empty (see the proof of \cite[Theorem 3.10]{ChenLabbe17}). This implies that there exists a divisor class $D$ with $\nu_{\text{vol}}^\R(D) = (\dim X)/2$ from Proposition \ref{volEigen}, and so the infinitely many elements of the $\Bir(X)$-orbit of $D$ we also have $\nu_{\text{vol}}^\R = (\dim X)/2$.
\end{proof}

\begin{remark}
If Conjecture \ref{conjectureLorentzian} is true, this would imply that $(\dim X)/2$ is the only interesting value that can be obtained for $\nu_\text{vol}^\R$ using Proposition \ref{volLes} on our $X$. On the other hand, a counterexample for Conjecture \ref{conjectureLorentzian} would very likely generate a divisor class $\Delta$ with irrational numerical dimension $\nu_\text{vol}^\R(\Delta)$.
\end{remark}

\begin{remark}
Proposition \ref{volLes} says nothing about the numerical dimension of the divisor classes obtained as the accumulation point of a general infinite reduced word $\mathbf{w}$. In general it is not true that the numerical dimension has any type of semi-continuity, making it hard to see a relation between this computation and the one presented in Proposition \ref{volLes}.  This could be a source of interesting numerical dimension values.
\end{remark}

%----------------------------------------------------------------------------------------------------------------------------------------------
\bibliography{mybib}{}
\bibliographystyle{alpha}
\end{document}